\newenvironment{keywords}{{\bf Keywords: }}{}
\newtheorem{theorem}{Theorem}[section]
\newtheorem{corollary}[theorem]{Corollary}
\newtheorem{definition}[theorem]{Definition}
\newtheorem{example}[theorem]{Example}
\newtheorem{lemma}[theorem]{Lemma}
\newtheorem{proposition}[theorem]{Proposition}
\newtheorem{remark}[theorem]{Remark}
\numberwithin{equation}{section}
\newenvironment{proof}[1][Proof]{\noindent \textbf{#1.} }{\  \rule{0.5em}{0.5em}}
\begin{document}

\title{Local time and Tanaka formula for  the $G$-Brownian motion\footnote{ Corresponding address:
Institute of Mathematical Economics,
Bielefeld University, Postfach 100131, 33501 Bielefeld, Germany}}

\author{Qian LIN  $^{1,2}$\footnote{     {\it Email address}:
linqian1824@163.com}
\\
{\small $^1$School of Mathematics, Shandong University,  Jinan 250100,   China;}\\
{\small $^2$ Laboratoire de Math\'ematiques, CNRS UMR 6205,
Universit\'{e} de Bretagne Occidentale,}\\ {\small 6, avenue Victor
Le Gorgeu, CS 93837, 29238 Brest cedex 3, France.}  }

\date{}

\maketitle
\begin{abstract} In this paper, we study the notion of  local time and obtain the Tanaka formula for  the $G$-Brownian
motion. Moreover, the joint continuity of the local time of the
$G$-Brownian motion is obtained and its quadratic variation is
proven. As an application, we generalize It\^{o}'s formula  with
respect to the $G$-Brownian motion to convex functions.
\end{abstract}

\noindent
\begin{keywords}
$G$-expectation;  $G$-Brownian motion; local time; Tanaka formula;
quadratic variation.
\end{keywords}

\section{Introduction }
The objective of the present paper is to study the local time as
well  as the Tanaka formula for the $G$-Brownian motion.

   Motivated by  uncertainty  problems, risk  measures  and
superhedging in finance,  Peng has introduced a new notion of
nonlinear expectation, the so-called $G$-expectation (see
\cite{Peng2006},  \cite{Peng:2007}, \cite{Peng2008}, \cite
{Peng:2009}, \cite {Peng:2010}),
   which is  associated with   the following  nonlinear heat equation
 $$\left\{\begin{array}{l l}
      \dfrac{\partial u(t, x)}{\partial t}=G(\dfrac{\partial ^{2}u(t, x)}{\partial x^{2}}),\quad (t, x)\in[0,
\infty)\times\mathbb{R},\\
       u(0, x)=\varphi(x),
         \end{array}
  \right.$$
 where, for given parameters $0\leq\underline{\sigma}\leq \overline{\sigma}$, the  sublinear function  $G$
 is defined as follows:
 $$G(\alpha)=\frac{1}{2}(\overline{\sigma}^{2}
 \alpha^{+}-\underline{\sigma}^{2}\alpha^{-}),\quad \alpha \in \mathbb{R}.$$
The $G$-expectation represents a special case of general nonlinear
expectations $\mathbb{\hat{E}}$ whose importance stems from the fact
that they are related to risk measures $\rho$ in finance by the
relation $\mathbb{\hat{E}}[X]=\rho(-X)$, where $X$ runs the class of
contingent claims. Although the $G$-expectations represent only a
special case, their importance inside the class of nonlinear
expectations stems from the stochastic analysis which it allows to
develop, in particular, such  important results as the law of large
numbers and the central limit theorem under nonlinear expectations,
obtained by Peng \cite{Peng2007}, \cite{Peng2008-1} and
\cite{Peng:2010a}.

Together with the notion of $G$-expectations Peng also introduced
the related $G$-normal distribution and the $G$-Brownian motion. The
$G$-Brownian motion is a stochastic process  with stationary and
independent  increments and its quadratic variation process is,
unlike the classical case of a linear expectation, a non
deterministic process. Moreover,  an It\^{o} calculus for the
$G$-Brownian motion has been developed recently in \cite{Peng2006},
\cite{Peng:2007} and \cite{Peng2008} and \cite{Peng2009}.

The fundamental notion of the local time for the classical Brownian
motion has been introduced by L\'{e}vy in \cite{L1948}, and   its
existence was established by Trotter \cite{TR} in 1958. In virtue of
its various applications in stochastic analysis  the notion of local
time and the related Tanaka formula have been well studied by
several authors, and it has also been extended to other classes of
stochastic processes.

It should be expected that the notion of local time and the very
narrowly related Tanaka formula will have the same importance in the
$G$-stochastic analysis on sublinear expectation spaces, which is
developed by Peng  (\cite{Peng2006},  \cite{Peng:2007},
\cite{Peng2008}, \cite {Peng:2009}, \cite {Peng:2010}). However,
unlike the study of the local time for the classical Brownian
motion, its investigation with respect to the $G$-Brownian motion
meets several difficulties: firstly, in contrast to the classical
Brownian motion the $G$-Brownian motion is not defined on a given
probability space but  on a sublinear expectation space. The
$G$-expectation $\mathbb{\hat{E}}$ can be represented as the upper
expectation of a subset of linear expectations $\{E_P, P\in\mathcal
{P}\} $, $i.e.,$
$$\mathbb{\hat{E}}[\cdot]=\sup\limits_{P\in\mathcal {P}} E_P[\cdot],$$
 where $P$ runs a large class of probability measures $\mathcal
 {P}$, which are  mutually singular. Let us point out that this is a very common
situation in financial models under volatility uncertainty (see
\cite{ALP}, \cite{DM}, \cite{L}).
 Secondly, related with  the novelty of
the theory of $G$-expectations, there is short of a lot of tools in
the $G$-stochastic analysis, among them there is, for instance, the
dominated convergence theorem. In order to point out the
difficulties, in Section 3, ad hoc definition of the local time for
the $G$-Brownian motion will be given independently of the
probability measures $P\in\mathcal {P} $, and the limits  of the
classical stochastic analysis in the study of this local time will
be  indicated.

Our paper is organized as follows: Section 2 introduces the
necessary notations and preliminaries and it gives a short recall of
some elements of the $G$-stochastic analysis which will be used in
what follows. In Sections 3,  an intuitive approach to the local
time shows its limits in the frame of the classical stochastic
analysis and works out the problems to be studied. In Section 4, we
define the notion of the local time for the $G$-Brownian motion and
prove the related Tanaka formula, which non-trivially generalizes
the classical one. Moreover, by using Kolmogorov's continuity
criterion under the nonlinear expectation the existence of a jointly
continuous version of the local time is shown. Finally, Section 5
investigates the quadratic variation of the local time for the
$G$-Brownian motion. As an application of the Local time and the
Tanaka formula for the $G$-Brownian motion obtained in Section 4,
Section 6 gives a generalization of the It\^{o} formula for the
$G$-Brownian motion to convex functions, which generalizes the
corresponding result in \cite {Peng2009}.

\section{Notations and preliminaries}

In this section, we introduce some notations and preliminaries of
the theory of sublinear expectations and the related $G$-stochastic
analysis, which will be needed in what follows. More details of this
section can be found in Peng \cite{Peng2006},  \cite{Peng:2007},
\cite{Peng2008}, \cite {Peng:2009} and  \cite {Peng:2010}.

 Let $\Omega$ be a given nonempty set and
$\mathcal{H}$ a linear space of real valued functions defined on
$\Omega$ such that, $1\in \mathcal{H}$ and $|X|\in \mathcal{H}$, for
all $X\in \mathcal{H}$.

\begin{definition}
{\bf A sublinear expectation} $\mathbb{\hat{E}}$ on $\mathcal {H}$
is a functional $\mathbb{\hat{E}}:\mathcal {H}\mapsto \mathbb{R}$
satisfying the following properties: for all $X, Y \in \mathcal
{H}$, we have
\begin{enumerate}
\item[(i)] {\bf Monotonicity:} If $X\geq Y $, then
$\mathbb{\hat{E}}[X]\geq\mathbb{\hat{E}}[Y]$.
\item[(ii)] {\bf Preservation of constants:}
$\mathbb{\hat{E}}[c]=c$, for all $c\in \mathbb{R}$.
\item[(iii)] {\bf Subadditivity:}
$\mathbb{\hat{E}}[X]-\mathbb{\hat{E}}[Y]\leq\mathbb{\hat{E}}[X-Y]$.
\item[(iv)] {\bf Positive homogeneity:}
$\mathbb{\hat{E}}[\lambda X]=\lambda\mathbb{\hat{E}}[X],$ for all $
\lambda \geq 0$.
\end{enumerate}
The triple $(\Omega, \mathcal {H}, \mathbb{\hat{E}})$ is called a
sublinear expectation space.
\end{definition}

\begin{remark}
$\mathcal{H}$ is considered as the space of random variables on
$\Omega$.
\end{remark}

Let us now consider a space of random variables $\mathcal{H}$ with
the additional property of stability with respect to bounded
Lipschitz functions. More precisely, we suppose, if $X_{i}\in
\mathcal{H}$, $i=1,\cdots,d$,
then%
\[
\varphi(X_{1},\cdots,X_{d})\in \mathcal{H}\text{,\  \ for all
}\varphi \in C_{b,Lip}(\mathbb{R}^{d}),
\]
where $C_{b,Lip}(\mathbb{R}^{d})$ denotes the space of all bounded
 Lipschitz  functions on $\mathbb{R}^{d}$.

\begin{definition} In a sublinear expectation space $(\Omega, \mathcal {H},
\mathbb{\hat{E}})$, a random vector $Y=(Y_{1},\cdots, Y_{n}),
\\ Y_{i}\in \mathcal {H}$, is said to be independent under $
\mathbb{\hat{E}}$ from another random vector $X=(X_{1},\cdots,
X_{m}), X_{i} \in \mathcal {H}$,  if for each test function $\varphi
\in C_{b,lip}(\mathbb{R}^{m+n})$ we have
$$\mathbb{\hat{E}}[\varphi(X, Y)]=\mathbb{\hat{E}}[\mathbb{\hat{E}}[\varphi(x, Y)]_{x=X}].$$
\end{definition}

\begin{definition}
Let $X_{1}$ and $X_{2}$ be two $n$--dimensional random vectors
defined
respectively in the sublinear expectation spaces $(\Omega_{1},\mathcal{H}%
_{1},\mathbb{\hat{E}}_{1})${ and
}$(\Omega_{2},\mathcal{H}_{2},\mathbb{\hat {E}}_{2})$. They are
called identically distributed, denoted by $X_{1}\sim X_{2}$, if
\[
\mathbb{\hat{E}}_{1}[\varphi(X_{1})]=\mathbb{\hat{E}}_{2}[\varphi
(X_{2})],\  \  \  \text{for all}\  \varphi \in
C_{b.Lip}(\mathbb{R}^{n}).
\]
\end{definition}

After the above basic definition we introduce now the central notion
of $G$-normal distribution.
\begin{definition}($G$-normal distribution)
 Let be given two reals $\underline{\sigma}, \overline{\sigma}$ with
$0\leq\underline{\sigma}\leq \overline{\sigma}.$ A random variable
$\xi$ in a sublinear expectation space $(\Omega, \mathcal {H},
\mathbb{\hat{E}})$ is called $G_{\underline{\sigma},
\overline{\sigma}}$-normal distributed,denoted by $\xi\sim \mathcal
{N}(0, [{\underline{\sigma}^{2}, \overline{\sigma}}^{2}])$,  if for
each $\varphi \in C_{b,lip}(\mathbb{R})$, the following function
defined by
$$u(t, x):=\mathbb{\hat{E}}[\varphi(x+\sqrt{t}\xi)], \quad (t,x)\in [0, \infty)\times\mathbb{R},$$
is the unique 、  viscosity solution  of the following parabolic
partial differential equation :
$$\left\{\begin{array}{l l}
      \partial_{t}u(t, x)=G(\partial_{xx}^{2}u(t, x)),\quad (t, x)\in[0,
\infty)\times\mathbb{R},\\
       u(0, x)=\varphi(x).
         \end{array}
  \right.$$
 Here  $G=G_{\underline{\sigma},
\overline{\sigma}}$
 is the following sublinear function parameterized by $\underline{\sigma}$
 and $\overline{\sigma}$:
 $$G(\alpha)=\frac{1}{2}(\overline{\sigma}^{2}\alpha^{+}-\underline{\sigma}^{2}\alpha^{-}), \quad \alpha \in \mathbb{R}$$
 (recall that $\alpha^{+}=\text{max}\{0,\alpha\}$ and
$\alpha^{-}=-\text{min}\{0,\alpha\}$).
\end{definition}

\begin{definition}
 A process $B=\{B_{t},t\geq 0\}\subset \mathcal {H}$ in a sublinear expectation space $(\Omega, \mathcal {H},
 \mathbb{\hat{E}})$ is called a G-Brownian motion if  the following
 properties are satisfied:
\begin{enumerate}
\item[(i)] $B_{0}=0$;
\item[(ii)] for each $t, s \geq0$, the difference $B_{t+s}-B_{t}$ is
$\mathcal {N}(0, \ [\underline{\sigma}^{2}s,
\overline{\sigma}^{2}s])$-distributed and is independent from
$(B_{t_{1}}, \cdots, B_{t_{n}})$, for all $n\in\mathbb{N}$ and
$0\leq t_{1}\leq\cdots \leq t_{n}\leq t$.
\end{enumerate}
\end{definition}

 Throughout this paper, we let from now on $\Omega=C_{0}(\mathbb{R^{+}})$ be the space of all
real valued continuous functions $(\omega_{t})_{t\in
\mathbb{R^{+}}}$ with $\omega_{0}=0$, equipped with the distance
$$\rho(\omega^{1}, \omega^{2})
=\sum\limits_{i=1}^{\infty}2^{-i}\Big[(\max\limits_{t\in[0,i]}|\omega_{t}^{1}-\omega_{t}^{2}|)\wedge1\Big],
\ \omega_{t}^{1}, \omega_{t}^{2}\in\Omega.$$ We denote by
$\mathcal{B}(\Omega)$ the Borel $\sigma$-algebra on $\Omega$. We
also set, for each $t\in \lbrack0,\infty)$, $\Omega_{t}:=\{
\omega_{\cdot \wedge t}:\omega \in \Omega \}$ and $\mathcal
{F}_{t}:=\mathcal{B}(\Omega_{t})$. Moreover, we will work with the
following spaces:

\begin{itemize}
\item $L^{0}(\Omega)$: the space of all $\mathcal{B}(\Omega)$-measurable real valued  functions on $\Omega$;

\item $L^{0}(\Omega_{t})$: the space of all
$\mathcal{B}(\Omega_{t})$-measurable real valued  functions on
$\Omega$;

\item $L_{b}(\Omega)$
: the space of all bounded elements in $L^{0}(\Omega)$;

\item  $L_{b}(\Omega_{t})$: the space of all bounded elements in
 $L^{0}(\Omega_{t})$.
\end{itemize}

In \cite{Peng2006}, a $G$-Brownian motion is constructed on a
sublinear expectation space
$(\Omega,\mathbb{L}_{G}^{p}(\Omega),\mathbb{\hat{E}})$,  where $\mathbb{L}%
_{G}^{p}(\Omega)$  is the Banach space defined as closure of
$\mathcal{H}:=\{\varphi(\omega_{t_{1}},\cdots,\omega_{t_{d}}),
\text{ for all }\varphi \in C_{b,Lip}(\mathbb{R}^{d}), 0\leq
t_{1}< \cdots < t_{d}, d\geq 1\} $ with respect to the norm $\left \Vert X\right \Vert _{p}:=\mathbb{\hat{E}}%
[|X|^{p}]^{1/p}, 1\leq p < \infty$. In this space the coordinate
process $B_{t}(\omega)=\omega_{t}$, $t\in \lbrack0,\infty)$, $\omega
\in\Omega$,  is a $G$-Brownian motion. Let us point out that the
space $C_{b}(\Omega)$ of the bounded continuous functions on
$\Omega$ is a subset of $\mathbb{L} _{G}^{1}(\Omega)$. Moreover,
there exists a weakly compact family $\mathcal{P}$ of probability
measures on $(\Omega ,\mathcal{B}(\Omega))$ such that
\begin{eqnarray}\label{e22}
\mathbb{\hat{E}}[\cdot]=\sup_{P\in \mathcal{P}}E_{P}[\cdot].
\end{eqnarray}
The Choquet capacity:
\[
\hat{c}(A):=\sup_{P\in \mathcal{P}}P(A),\  \ A\in \mathcal{B}(\Omega).
\]

\begin{definition}
A set $A\subset \Omega$ is called polar if $\hat{c}(A)=0$. A
property is said to hold \textquotedblleft
quasi-surely\textquotedblright \ (q.s.) if it holds outside a polar
set.
\end{definition}

\begin{definition}
A mapping $X$ on $\Omega$
 with values in a topological space is said to
be quasi-continuous if for all $\varepsilon>0$, there exists an open
set $O$ with $\hat{c}(O)<\varepsilon$ such that $X|_{O^{c}}$ is
continuous.
\end{definition}

The family of probability measures $\mathcal{P}$
 allows to characterize the space $\mathbb{L}_{G}^{p}(\Omega)$ as follows:
\begin{eqnarray*}
\mathbb{L}_{G}^{p}(\Omega)=\Big\{X\in
L^{0}(\Omega):\lim\limits_{n\rightarrow\infty}\sup\limits_{P\in
\mathcal{P}} E_{P}[|X|^{p}1_{|X|>n}]=0 \text{, and }X\text{ is
}\hat{c}\text{-quasi  continuous}\Big\}.
\end{eqnarray*}
We also introduce the following spaces, for all $p>0$,

\begin{itemize}
\item $\mathcal{L}^{p}:=\Big\{X\in L^{0}(\Omega):\mathbb{\hat{E}}[|X|^{p}%
]=\sup\limits_{P\in \mathcal{P}}E_{P}[|X|^{p}]<\infty \Big\}$;

\item $\mathcal{N}:=\Big\{X\in L^{0}(\Omega):X=0$, $\hat{c}$-quasi surely
(q.s.).$\Big\}$
\end{itemize}
Obviously, $\mathcal{L}^{p}$ and $\mathcal{N}$ are linear spaces and
$\Big\{X\in
L^{0}(\Omega):\mathbb{\hat{E}}[|X|^{p}]=0\Big\}=\mathcal{N}$, for
all $p>0$. We put $\mathbb{L}^{p}:=\mathcal{L}^{p}/\mathcal{N}$. As
usual, we will not  make here the distinction between classes and
their representatives.

The following three propositions  can be consulted in \cite{DHP} and
\cite{HP} .

\begin{proposition}
\label{pr8} For every monotonically decreasing sequence  $\{X_{n}
\}_{n=1}^{\infty}$ of nonnegative functions in  $C_{b}(\Omega)$,
which converges to zero q.s. on $\Omega$, it holds
$\lim\limits_{n\rightarrow \infty}\mathbb{\hat{E}}[X_{n}]=0$.
\end{proposition}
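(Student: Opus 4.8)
The plan is to exploit the representation $\mathbb{\hat{E}}[\cdot]=\sup_{P\in\mathcal{P}}E_{P}[\cdot]$ together with the weak compactness of $\mathcal{P}$, thereby reducing the assertion to a classical Dini-type argument. The guiding idea is that for a single fixed probability measure $P$ the ordinary dominated convergence theorem already delivers $E_{P}[X_{n}]\to 0$, while the weak compactness of $\mathcal{P}$ is exactly what promotes this pointwise-in-$P$ convergence to uniform convergence, and hence to the convergence of the supremum $\mathbb{\hat{E}}[X_{n}]$.

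First I would fix $P\in\mathcal{P}$ and establish $E_{P}[X_{n}]\to 0$. Since the $X_{n}$ are nonnegative and monotonically decreasing, the pointwise limit $X_{\infty}:=\lim_{n}X_{n}$ exists everywhere on $\Omega$, and the hypothesis that $X_{n}\to 0$ q.s. says precisely $\hat{c}(\{X_{\infty}>0\})=0$. As $P(A)\leq\hat{c}(A)$ for every $P\in\mathcal{P}$, we obtain $P(\{X_{\infty}>0\})=0$, i.e. $X_{n}\to 0$ $P$-almost surely. Because $0\leq X_{n}\leq X_{1}$ with $X_{1}\in C_{b}(\Omega)$ bounded, the classical dominated convergence theorem applies under the fixed measure $P$ and yields $E_{P}[X_{n}]\to 0$.

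Next I would set $f_{n}(P):=E_{P}[X_{n}]$ and run Dini's theorem on the compact space $\mathcal{P}$. Each $f_{n}$ is continuous for the weak topology on $\mathcal{P}$, since $X_{n}\in C_{b}(\Omega)$ and weak convergence of probability measures is defined by testing against bounded continuous functions. The sequence $(f_{n})_{n}$ is decreasing in $n$ (by $X_{n+1}\leq X_{n}$ and the monotonicity of $E_{P}$), and by the previous step it converges pointwise on $\mathcal{P}$ to the continuous limit $0$. Since $\mathcal{P}$ is weakly compact (indeed metrizable, $\Omega$ being Polish), Dini's theorem gives that the convergence $f_{n}\to 0$ is uniform in $P$, so that $\sup_{P\in\mathcal{P}}f_{n}(P)\to 0$; that is, $\mathbb{\hat{E}}[X_{n}]\to 0$, as claimed.

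The step I expect to be the real crux is the continuity of $P\mapsto E_{P}[X_{n}]$ for the weak topology, for this is exactly where the assumption $X_{n}\in C_{b}(\Omega)$ (rather than merely bounded and Borel measurable) is genuinely used. It is this continuity, in conjunction with the weak compactness of $\mathcal{P}$, that makes Dini's theorem available and converts the fiberwise dominated-convergence estimates into the uniform control needed for the supremum. A purely measure-theoretic argument on a single probability space would not suffice here, since the measures in $\mathcal{P}$ are mutually singular; the topological input from weak compactness is indispensable.
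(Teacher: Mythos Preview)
Your argument is correct. The paper does not give its own proof of this proposition; it merely cites \cite{DHP} and \cite{HP}, and the Dini-type argument you present (dominated convergence for each fixed $P$, continuity of $P\mapsto E_{P}[X_{n}]$ from $X_{n}\in C_{b}(\Omega)$, then Dini's theorem on the weakly compact $\mathcal{P}$) is precisely the standard proof given in those references.
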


\begin{proposition}
\label{pr9} For all $p>0$, we have

\begin{enumerate}
\item  $\mathbb{L}^{p}$ is a Banach space with respect to the norm
$\left \Vert X\right \Vert _{p}:=\left(
\mathbb{\hat{E}}[|X|^{p}]\right) ^{\frac{1}{p}}$.

\item $\mathbb{L}_{G}^{p}$ is the completion of $C_{b}(\Omega)$ with respect to the norm
$\left \Vert \cdot\right \Vert _{p}$.
\end{enumerate}
\end{proposition}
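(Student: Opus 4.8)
The plan is to treat the two assertions separately, exploiting throughout the representation $\mathbb{\hat{E}}[\cdot]=\sup_{P\in\mathcal{P}}E_{P}[\cdot]$ from (\ref{e22}), which lets me reduce each sublinear statement to a classical one ``fibrewise'' in $P$ and then take a supremum. For the first part I must check that $\|\cdot\|_{p}$ is a norm (for $p\ge1$; for $0<p<1$ the relevant complete metric is $d(X,Y):=\mathbb{\hat{E}}[|X-Y|^{p}]$, which satisfies the triangle inequality via $|a+b|^{p}\le|a|^{p}+|b|^{p}$) and that $\mathbb{L}^{p}$ is complete. For the second part, once completeness of $\mathbb{L}^{p}$ is in hand, the completion of $C_{b}(\Omega)$ is simply its $\|\cdot\|_{p}$-closure inside $\mathbb{L}^{p}$, and the task becomes to identify this closure with $\mathbb{L}_{G}^{p}$.

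For the norm axioms, definiteness is exactly the identity $\{X\in L^{0}(\Omega):\mathbb{\hat{E}}[|X|^{p}]=0\}=\mathcal{N}$ recorded above, so that $\|X\|_{p}=0$ iff $X=0$ in $\mathbb{L}^{p}=\mathcal{L}^{p}/\mathcal{N}$; homogeneity follows from positive homogeneity of $\mathbb{\hat{E}}$; and the triangle inequality is obtained by applying the classical Minkowski inequality to each $E_{P}$, bounding $(E_{P}[|X+Y|^{p}])^{1/p}$ by $\|X\|_{p}+\|Y\|_{p}$ uniformly in $P$, and passing to the supremum. For completeness I would take a Cauchy sequence $\{X_{n}\}$, extract a subsequence with $\|X_{n_{k+1}}-X_{n_{k}}\|_{p}\le 2^{-k}$, and combine the Chebyshev-type bound $\hat{c}(\{|X_{n_{k+1}}-X_{n_{k}}|>2^{-k/2}\})\le 2^{-pk/2}$ with the countable subadditivity of the capacity (so that a Borel--Cantelli argument applies) to produce a quasi-sure limit $X$ of the subsequence. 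The decisive point is to upgrade quasi-sure convergence to norm convergence in the absence of a dominated convergence theorem; here I would invoke a Fatou inequality for $\mathbb{\hat{E}}$, namely $\mathbb{\hat{E}}[\liminf_{j}|X_{n_{j}}-X_{n_{k}}|^{p}]\le\liminf_{j}\mathbb{\hat{E}}[|X_{n_{j}}-X_{n_{k}}|^{p}]$, itself a consequence of classical Fatou applied to each $E_{P}$ followed by a supremum, to conclude $X\in\mathcal{L}^{p}$ and $\|X-X_{n_{k}}\|_{p}\to0$; the Cauchy property then promotes this to convergence of the full sequence.

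For the second assertion, the inclusion $\overline{C_{b}(\Omega)}\subseteq\mathbb{L}_{G}^{p}$ is immediate, since $C_{b}(\Omega)\subset\mathbb{L}_{G}^{p}$ and $\mathbb{L}_{G}^{p}$, being complete, is closed in $\mathbb{L}^{p}$. For the reverse inclusion I would start from the characterization of $\mathbb{L}_{G}^{p}$ as the quasi-continuous functions with $\sup_{P}E_{P}[|X|^{p}]<\infty$, truncate a given $X$ by $X_{N}:=(X\wedge N)\vee(-N)$ to reduce to bounded quasi-continuous functions, control the tail $\|X-X_{N}\|_{p}$ by applying Proposition \ref{pr8} to a suitable decreasing sequence, and finally approximate each bounded quasi-continuous function in $\|\cdot\|_{p}$ by genuinely bounded continuous ones through a Lusin/Tietze-type construction adapted to the capacity $\hat{c}$.

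I expect the main obstacles to be precisely the two places where the classical toolkit is missing: in the first part, replacing dominated convergence by the Fatou inequality for $\mathbb{\hat{E}}$ and justifying Borel--Cantelli through countable subadditivity of $\hat{c}$; and in the second part, the density step, where the unboundedness of a general $X\in\mathbb{L}_{G}^{p}$ forces the (implicitly uniform-integrability-type) tail estimate, and where converting quasi-continuity into bona fide continuous approximants is the genuinely delicate matter, resting on Proposition \ref{pr8}.
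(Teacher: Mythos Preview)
The paper does not supply its own proof of this proposition: it is one of the three results (Propositions~\ref{pr8}, \ref{pr9}, \ref{pr1}) that the paper explicitly quotes from \cite{DHP} and \cite{HP} without argument. So there is no in-paper proof to compare against.

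That said, your outline is essentially the argument given in \cite{DHP}. A few remarks on the details. For completeness, your use of the fibrewise Fatou inequality $\mathbb{\hat{E}}[\liminf_{j}Z_{j}]\le\liminf_{j}\mathbb{\hat{E}}[Z_{j}]$ (obtained by applying classical Fatou under each $P$ and then taking the supremum) is exactly the standard substitute for dominated convergence in this setting, and the Borel--Cantelli step goes through because $\hat{c}$ is countably subadditive (this is immediate from $\hat{c}(A)=\sup_{P}P(A)$). For the second part, your appeal to Proposition~\ref{pr8} to control the truncation error $\|X-X_{N}\|_{p}$ is not quite the right hook: Proposition~\ref{pr8} concerns decreasing sequences in $C_{b}(\Omega)$, whereas $|X|^{p}1_{\{|X|>N\}}$ is typically neither bounded nor continuous. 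In \cite{DHP} the tail estimate $\mathbb{\hat{E}}[|X|^{p}1_{\{|X|>N\}}]\to0$ is instead derived as a \emph{characterization} of membership in $\mathbb{L}_{G}^{p}$ (equivalently, from the definition of $\mathbb{L}_{G}^{p}$ as a completion of a space of bounded functions), so you should either take it as an input from that characterization or prove it directly from the completion definition before invoking it. The Lusin/Tietze step for bounded quasi-continuous functions is the genuinely nontrivial ingredient and is carried out in \cite{DHP} via the tightness of $\mathcal{P}$; your sketch correctly identifies this as the delicate point.
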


 We denote by $\mathbb{L}_{\ast}^{p}(\Omega)$ the completion of $L_{b}(\Omega)$ with respect to the norm
$\left \Vert \cdot\right \Vert _{p}$.

\begin{proposition}\label{pr1}
For a given $p\in(0,+\infty]$, let $\{X_{n}\}_{n=1}^{\infty}\subset
\mathbb{L}^{p}$ be a sequence converging to $X$ in $\mathbb{L}^{p}$.
Then there exists a subsequence $(X_{n_{k}})$ which converges to $X$
quasi-surely in the sense that it converges to $X$ outside a polar
set.
\end{proposition}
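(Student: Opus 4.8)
The plan is to transplant the classical fact that $L^{p}$-convergence forces almost-sure convergence along a subsequence into the sublinear setting, the representation $\mathbb{\hat E}[\cdot]=\sup_{P\in\mathcal P}E_{P}[\cdot]$ of (\ref{e22}) being the device that lets every measure-theoretic estimate pass to the capacity $\hat c$. When $p=+\infty$ there is nothing to extract: convergence in the norm $\|\cdot\|_{\infty}$ gives $|X_{n}-X|\le \|X_{n}-X\|_{\infty}$ quasi-surely with $\|X_{n}-X\|_{\infty}\to0$, so the whole sequence already converges to $X$ outside the countable union of the associated polar sets, which is itself polar. I therefore concentrate on $p\in(0,+\infty)$.

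First I would extract a fast subsequence. Since $\|X_{n}-X\|_{p}\to0$ entails $\mathbb{\hat E}[|X_{n}-X|^{p}]\to0$, choose indices $n_{1}<n_{2}<\cdots$ with $\mathbb{\hat E}[|X_{n_{k}}-X|^{p}]\le 2^{-k}$ for every $k$. Next I would record the two capacity-level tools. For fixed $\varepsilon>0$ and each $P\in\mathcal P$, Markov's inequality gives $P(|X_{n_{k}}-X|>\varepsilon)\le \varepsilon^{-p}E_{P}[|X_{n_{k}}-X|^{p}]\le \varepsilon^{-p}\mathbb{\hat E}[|X_{n_{k}}-X|^{p}]$; taking the supremum over $P$ yields the Chebyshev estimate
$$\hat c\big(|X_{n_{k}}-X|>\varepsilon\big)\le \varepsilon^{-p}\,\mathbb{\hat E}[|X_{n_{k}}-X|^{p}]\le \varepsilon^{-p}2^{-k}.$$
Second, $\hat c$ is countably subadditive: for Borel sets $A_{j}$ one has $\hat c\big(\bigcup_{j}A_{j}\big)=\sup_{P}P\big(\bigcup_{j}A_{j}\big)\le \sup_{P}\sum_{j}P(A_{j})\le \sum_{j}\hat c(A_{j})$, using countable additivity of each $P$ together with $\sup\sum\le\sum\sup$.

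With these in hand the Borel--Cantelli bookkeeping is classical. For each integer $m\ge1$ set $A_{k}^{m}:=\{|X_{n_{k}}-X|>1/m\}$; the Chebyshev estimate gives $\sum_{k}\hat c(A_{k}^{m})\le m^{p}\sum_{k}2^{-k}<\infty$, so by countable subadditivity $\hat c\big(\bigcup_{k\ge j}A_{k}^{m}\big)\to0$ as $j\to\infty$, whence $D_{m}:=\bigcap_{j}\bigcup_{k\ge j}A_{k}^{m}$ is polar. Put $D:=\bigcup_{m}D_{m}$, again polar by countable subadditivity. For $\omega\notin D$ and each $m$, the point $\omega$ belongs to only finitely many $A_{k}^{m}$, i.e.\ $|X_{n_{k}}(\omega)-X(\omega)|\le 1/m$ for all large $k$; letting $m\to\infty$ shows $X_{n_{k}}(\omega)\to X(\omega)$, so $X_{n_{k}}\to X$ quasi-surely.

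The only point requiring care beyond the classical argument is that the capacity $\hat c$, although merely sublinear, still obeys countable subadditivity and the sup-form of Chebyshev's inequality; both are immediate consequences of (\ref{e22}), since each $P\in\mathcal P$ is a genuine countably additive probability measure. I expect no essential obstacle here: once these two inequalities are secured, the extraction of the subsequence and the two-index $\limsup$ argument proceed exactly as in the standard real-analysis proof, with $\hat c$ playing the role of the ambient probability measure.
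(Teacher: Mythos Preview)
Your argument is correct: the Chebyshev-type inequality for the capacity $\hat c$ and its countable subadditivity both follow directly from the representation $\hat{\mathbb{E}}[\cdot]=\sup_{P\in\mathcal P}E_{P}[\cdot]$, and the fast-subsequence Borel--Cantelli extraction then goes through verbatim. Note that the paper does not actually prove this proposition; it is merely quoted from \cite{DHP} and \cite{HP}, and your proof is precisely the standard one found in those references, so there is no meaningful difference in approach to compare.
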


 We now recall the definition of quadratic variation
process of the $G$-Brownian motion. We use
$\{0=t_{0}<t_{1}\cdots<t_{n}=t\}$ to denote a partition of $[0,t]$
such that
  $\max\{t_{i+1}-t_{i}, 0\leq i\leq n-1\}\rightarrow 0,$ as
  $n\rightarrow\infty.$ Then the quadratic variation
process of the $G$-Brownian motion is defined as follows:
\begin{eqnarray*}
\langle
B\rangle_{t}:=\mathbb{L}_{G}^{2}\lim\limits_{n\rightarrow\infty}\sum\limits_{i=0}^{n-1}[B_{t_{i+1}^{n}\wedge
t}-B_{t_{i}^{n}\wedge t}]^{2} =B^{2}_{t}-2\int^{t}_{0}B_{s}dB_{s}.
\end{eqnarray*}
$\langle B \rangle$ is continuous and increasing outside a polar
set.

In \cite{Peng2009}, a  generalized It\^{o} integral and a
generalized It\^{o} formula with respect to the $G$-Brownian motion
are discussed as follows.

 For arbitrarily fixed $p\geq1$ and $T\in
\mathbb{R}_{+}$, we first consider the following set of step
processes:
\begin{eqnarray}\label{a1}
M_{b,0}(0,T)&=&\Big\{\eta:\eta_{t}(\omega)=\sum_{j=0}^{n-1}\xi_{j}
(\omega)1_{[t_{j},t_{j+1})}(t), 0=t_{0}<\cdots<t_{n}=T, \nonumber \\
& & \quad  \xi_{j}\in L_{b}(\Omega_{t_{j} }), j=0,\cdots,n-1, n\geq
1 \Big\}.
\end{eqnarray}

\begin{definition}
For an $\eta \in M_{b,0}(0,T)$ of the form (\ref{a1}), the related
Bochner integral is
\[
\int_{0}^{T}\eta_{t}(\omega)dt=\sum_{j=0}^{n-1}\xi_{j}(\omega)(t_{j+1}%
-t_{j}).
\]
\end{definition}
For each $\eta \in M_{b,0}(0,T)$, we set
\begin{eqnarray*}
\hat{\mathbb{E}}_{T}[\eta]:=\frac{1}{T}\hat{\mathbb{E}}[\int_{0}^{T}\eta
_{t}dt]=\frac{1}{T}\hat{\mathbb{E}}[\sum_{j=0}^{n-1}\xi_{j}(t_{j+1}-t_{j})],
\end{eqnarray*}
and we  introduce the norm $$||\eta||_{M^{p}_{*}(0,T)}=\Big( \hat{\mathbb{E}%
}[\int_{0}^{T}|\eta_{t}|^{p}dt]\Big)^{1/p}$$ on $ M_{b,0}(0,T)$.
With respect to this norm, $M_{b,0}(0,T)$ can be continuously
extended to a Banach space.

\begin{definition}\label{d1}
For each $p\geq1$, we denote by $M_{\ast}^{p}(0,T)$ the completion of
$M_{b,0}(0,T)$ under the norm
\begin{eqnarray*}
||\eta||_{M^{p}_{*}(0,T)}=\Big(
\hat{\mathbb{E}}[\int_{0}^{T}|\eta_{t}|^{p} dt]\Big)^{1/p}.
\end{eqnarray*}

\end{definition}

\begin{definition}\label{de1}
For every $\eta \in M_{b,0}(0,T)$ of the form (\ref{a1})
\[
\eta_{t}(\omega)=\sum_{j=0}^{n-1}\xi_{j}(\omega)1_{[t_{j},t_{j+1}%
)}(t),
\]
the It\^{o} integral
\[
I(\eta)=\int_{0}^{T}\eta_{s}dB_{s}:=\sum_{j=0}^{n-1}\xi_{j}(B_{t_{j+1}%
}-B_{t_{j}})\mathbf{.}%
\]

\end{definition}

\begin{lemma}\label{le3}
{ { { \label{bdd}The mapping $I: M_{b,0}(0,T)\rightarrow
\mathbb{L}_{\ast} ^{2}(\Omega_{T})$ is a  continuous, linear
mapping. Thus it can be continuously extended to
$I:M_{\ast}^{2}(0,T)\rightarrow \mathbb{L}_{\ast}^{2}(\Omega_{T})$.
Moreover, for all $\eta \in M_{\ast}^{2}(0,T)$, we have
\begin{align}
\mathbb{\hat{E}}[\int_{0}^{T}\eta_{s}dB_{s}]  &  =0,\  \  \\
\mathbb{\hat{E}}[(\int_{0}^{T}\eta_{s}dB_{s})^{2}]  &  \leq \overline{\sigma
}^{2}\hat{\mathbb{E}}[\int_{0}^{T}\eta_{s}^{2}ds]. %
\end{align}
} } }
\end{lemma}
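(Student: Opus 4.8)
The plan is to establish the Itô isometry-type inequality on the dense subspace of step processes $M_{b,0}(0,T)$ by direct computation, verify that $I$ is bounded (hence continuous) there, and then invoke the standard extension-by-density argument for bounded linear maps between Banach spaces. The two displayed identities will first be proven for $\eta \in M_{b,0}(0,T)$ of the explicit form $\eta_t=\sum_{j=0}^{n-1}\xi_j\mathbf{1}_{[t_j,t_{j+1})}(t)$, where the Itô integral is the finite sum $I(\eta)=\sum_{j=0}^{n-1}\xi_j(B_{t_{j+1}}-B_{t_j})$, and then transferred to the completion $M_{\ast}^2(0,T)$ by continuity.

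For the mean-zero identity $\mathbb{\hat{E}}[\int_0^T\eta_s\,dB_s]=0$, the key point is that $\xi_j\in L_b(\Omega_{t_j})$ is bounded and $\mathcal{F}_{t_j}$-measurable, while the increment $B_{t_{j+1}}-B_{t_j}$ is $\mathcal{N}(0,[\underline\sigma^2(t_{j+1}-t_j),\overline\sigma^2(t_{j+1}-t_j)])$-distributed and independent of $(B_{t_1},\dots,B_{t_j})$. First I would argue that it suffices to show each summand contributes nothing. Since the $G$-normal increment has $\mathbb{\hat{E}}[B_{t_{j+1}}-B_{t_j}]=\mathbb{\hat{E}}[-(B_{t_{j+1}}-B_{t_j})]=0$ (both $\pm$ the centered increment are $G$-normal with the same law, so the symmetric function $\varphi(y)=y$ integrates to the same value for $\pm$, forcing it to be $0$), one uses the independence property in Definition~2.3 to condition on the past. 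The care needed here is that sublinearity only gives $\mathbb{\hat{E}}[X+Y]\le\mathbb{\hat{E}}[X]+\mathbb{\hat{E}}[Y]$, so one cannot simply add expectations of summands; instead I would peel off the last increment using the tower-type relation $\mathbb{\hat{E}}[\varphi(X,Y)]=\mathbb{\hat{E}}[\mathbb{\hat{E}}[\varphi(x,Y)]_{x=X}]$ with $Y=B_{t_{n}}-B_{t_{n-1}}$ and $\varphi$ linear in its last argument, so that the inner expectation over the independent increment vanishes, then iterate downward over $j=n-1,\dots,0$.

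For the variance bound $\mathbb{\hat{E}}[(\int_0^T\eta_s\,dB_s)^2]\le\overline\sigma^2\,\mathbb{\hat{E}}[\int_0^T\eta_s^2\,ds]$, I would expand the square of the finite sum into diagonal terms $\sum_j\xi_j^2(B_{t_{j+1}}-B_{t_j})^2$ and cross terms $2\sum_{i<j}\xi_i\xi_j(B_{t_{i+1}}-B_{t_i})(B_{t_{j+1}}-B_{t_j})$. Again conditioning on the past via the independence property and iterating from the largest index downward, each cross term is killed because it is linear in the final centered increment (whose $G$-expectation is $0$), while each diagonal term is controlled using $\mathbb{\hat{E}}[(B_{t_{j+1}}-B_{t_j})^2]=\overline\sigma^2(t_{j+1}-t_j)$, which follows from evaluating the $G$-heat equation on $\varphi(y)=y^2$ (the positive part of the generator $G$ selects $\overline\sigma^2$). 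The resulting bound is exactly $\overline\sigma^2\,\mathbb{\hat{E}}[\sum_j\xi_j^2(t_{j+1}-t_j)]=\overline\sigma^2\,\mathbb{\hat{E}}[\int_0^T\eta_s^2\,ds]$, which both proves continuity of $I$ on $M_{b,0}(0,T)$ and establishes the stated inequality.

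The main obstacle will be handling the cross terms and the recursive conditioning rigorously without the convenience of linearity of the expectation: in the classical case one uses $E[\cdot\,|\,\mathcal{F}_{t_j}]$ and orthogonality, but here only the sublinear independence relation of Definition~2.3 is available, and one must check at each step that the partially-integrated integrand still lies in a class of admissible (bounded, Lipschitz) test functions so the relation applies. Once the two estimates hold on $M_{b,0}(0,T)$, the extension is routine: $I$ is linear and bounded with operator norm $\le\overline\sigma$, hence uniformly continuous, so it extends uniquely to a continuous linear map on the completion $M_{\ast}^2(0,T)$, and both identities pass to the limit because both sides are continuous in the $M_{\ast}^2$-norm (using Proposition~2.9 to extract a quasi-surely convergent subsequence if a pointwise-limit argument is preferred).
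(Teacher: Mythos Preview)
The paper does not actually supply a proof of this lemma: it is listed in Section~2 among the preliminaries taken from Peng's and Li--Peng's work (references \cite{Peng2006}, \cite{Peng:2007}, \cite{Peng2008}, \cite{Peng2009}), and no argument is given. So there is nothing in the paper to compare your proposal against.

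That said, your outline is essentially the standard proof from Peng's original construction, and the strategy is sound. One point deserves care, and you flag it yourself: the independence relation in Definition~2.3 is stated only for test functions $\varphi\in C_{b,Lip}$, whereas the coefficients $\xi_j$ here lie in $L_b(\Omega_{t_j})$, i.e.\ are merely bounded and measurable. Your recursive ``peeling'' argument therefore does not apply directly to an arbitrary $\eta\in M_{b,0}(0,T)$. The clean fix is either (i) to first prove the two identities for $\xi_j$ in the Lipschitz cylinder class (where Definition~2.3 applies verbatim), then pass to $L_b(\Omega_{t_j})$ by the density of $C_b(\Omega_{t_j})$ in $\mathbb{L}^2_\ast(\Omega_{t_j})$, or (ii) to bypass Definition~2.3 entirely and use the representation $\hat{\mathbb{E}}[\cdot]=\sup_{P\in\mathcal P}E_P[\cdot]$ together with the fact that $B$ is a square-integrable $P$-martingale for every $P\in\mathcal P$, so that the classical It\^o isometry under each $P$ combined with $d\langle B\rangle_s\le\overline\sigma^2\,ds$ yields the bound uniformly in $P$. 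Either route closes the gap; the rest of your argument (boundedness, density extension, continuity of both sides in the $M_\ast^2$-norm) is routine and correct.
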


\begin{definition}
\label{Def4.4}For each fixed $p>0$, we denote by $\eta \in
M_{\omega}^{p}(0,T)$ the space of  stochastic processes
$\eta=(\eta_{t})_{t\in [0,T]}$ for which there exists a sequence of
increasing stopping times $\{ \sigma_{m}\}_{m=1}^{\infty}$ (which
can depend on the process $\eta$), with $\sigma_{m}\uparrow T$,
quasi-surely, such that $\eta 1_{[0,\sigma_{m}]}\in
M_{\ast}^{p}(0,T)$ and
\[
\widehat{c}(\int_{0}^{T}|\eta_{s}|^{p}ds<\infty)=1.
\]
\end{definition}

The generalized It\^{o} formula is obtained in \cite{Peng2009}.
\begin{theorem}\label{th1}
Let $\varphi \in C^{2}(\mathbb{R})$ and
$$X_{t}=X_{0}+\int_{0}^{t}\alpha
_{s}ds+\int_{0}^{t}\eta_{s}d\langle
B\rangle_{s}+\int_{0}^{t}\beta_{s}dB_{s}, \ \text{for all} \
t\in[0,T],$$
where $\alpha,\eta$ in $M_{\omega}^{1}(0,T)$ and $\beta \in M_{\omega}%
^{2}(0,T)$. Then, for each $0\leq t\leq T$, we have
\begin{align*}
\varphi(X_{t})-\varphi(X_{0})   =& \int_{0}^{t}\partial_{x}\varphi(X_{u}%
)\beta_{u}dB_{u}+\int_{0}^{t}\partial_{x}\varphi(X_{u})\alpha_{u}du\\
&  +\int_{0}^{t}[\partial_{x}\varphi(X_{u})\eta_{u}+\frac{1}{2}\partial
_{xx}^{2}\varphi(X_{u})\beta_{u}^{2}]d\langle B\rangle_{u}.
\end{align*}
\end{theorem}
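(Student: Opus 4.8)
The plan is to prove the generalized It\^{o} formula (Theorem \ref{th1}) by the standard localization-plus-approximation strategy, adapted to the $G$-expectation setting. First I would reduce to the case where the process is nicely controlled: using the stopping times $\{\sigma_m\}$ coming from Definition \ref{Def4.4}, I would replace $\alpha,\eta,\beta$ by $\alpha 1_{[0,\sigma_m]}$, $\eta 1_{[0,\sigma_m]}$, $\beta 1_{[0,\sigma_m]}$, so that the integrands lie in the genuine completion spaces $M_{\ast}^{1}(0,T)$ and $M_{\ast}^{2}(0,T)$ where the It\^{o} integral of Lemma \ref{le3} and its isometry-type estimate are available. One proves the formula up to time $t\wedge\sigma_m$ and then lets $m\to\infty$, invoking $\sigma_m\uparrow T$ quasi-surely together with the quasi-sure convergence from Proposition \ref{pr1} to pass the identity to the whole interval $[0,t]$. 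In the same spirit I would first assume $\varphi\in C_b^2(\mathbb{R})$ with bounded first and second derivatives, and recover general $\varphi\in C^2(\mathbb{R})$ at the end by a further localization on the range of $X$.

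The core computation is the partition argument. Fix a partition $0=t_0<t_1<\cdots<t_N=t$ with mesh tending to zero and write the telescoping sum
\begin{equation*}
\varphi(X_t)-\varphi(X_0)=\sum_{k=0}^{N-1}\bigl[\varphi(X_{t_{k+1}})-\varphi(X_{t_k})\bigr].
\end{equation*}
To each increment I would apply a second-order Taylor expansion,
\begin{equation*}
\varphi(X_{t_{k+1}})-\varphi(X_{t_k})=\partial_x\varphi(X_{t_k})\,\Delta X_k+\tfrac{1}{2}\partial_{xx}^2\varphi(X_{t_k})(\Delta X_k)^2+R_k,
\end{equation*}
where $\Delta X_k=X_{t_{k+1}}-X_{t_k}$ and $R_k$ is the Taylor remainder. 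Substituting the dynamics of $X$, the first-order term $\partial_x\varphi(X_{t_k})\Delta X_k$ splits into a $dB$ piece, a $ds$ piece, and a $d\langle B\rangle$ piece; these converge to the three first-order integrals in the statement as the mesh shrinks, using the continuity of $\partial_x\varphi(X_\cdot)$ and the definitions of the three integrals. The decisive point is the quadratic term: expanding $(\Delta X_k)^2$, every cross term and the $(ds)^2$, $(d\langle B\rangle)^2$ terms vanish in the limit, while the crucial contribution $\sum_k \partial_{xx}^2\varphi(X_{t_k})\beta_{t_k}^2(\Delta B_k)^2$ must be shown to converge to $\int_0^t\partial_{xx}^2\varphi(X_u)\beta_u^2\,d\langle B\rangle_u$. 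This is exactly where the non-classical feature of the $G$-Brownian motion enters: $(\Delta B_k)^2$ is replaced not by $\Delta t_k$ but by the increment $\Delta\langle B\rangle_k$ of the non-deterministic quadratic variation process.

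The hard part will be controlling these limits in the $\mathbb{L}^p$ (equivalently $\hat{\mathbb{E}}$-)norm rather than almost surely, since the dominated convergence theorem is unavailable in the $G$-framework. The plan is to estimate each error term by its sublinear expectation and use the moment bound of Lemma \ref{le3} for the stochastic-integral pieces together with the subadditivity and positive homogeneity of $\mathbb{\hat{E}}$ to split sums of increments. For the key quadratic term I would first establish the replacement $\sum_k \partial_{xx}^2\varphi(X_{t_k})\beta_{t_k}^2\bigl[(\Delta B_k)^2-\Delta\langle B\rangle_k\bigr]\to 0$ in $\mathbb{L}^1$, exploiting that $\langle B\rangle$ is by definition the $\mathbb{L}_G^2$-limit of the sums of squared increments and that the weights $\partial_{xx}^2\varphi(X_{t_k})\beta_{t_k}^2$ are uniformly controlled after the localization above; then the remaining sum converges to the stated $d\langle B\rangle$-integral by continuity. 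Finally I would verify that the Taylor remainders $\sum_k R_k$ vanish, bounding $|R_k|\le \tfrac12\omega(|\Delta X_k|)(\Delta X_k)^2$ via a modulus of continuity $\omega$ of $\partial_{xx}^2\varphi$ and using the quasi-sure continuity of $X$ to make $\omega(\max_k|\Delta X_k|)\to 0$, which closes the argument.
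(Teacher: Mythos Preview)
The paper does not actually prove Theorem \ref{th1}. It is quoted in the preliminaries section with the sentence ``The generalized It\^{o} formula is obtained in \cite{Peng2009}'' and used as a tool throughout, so there is no in-paper argument to compare your proposal against.

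That said, your outline is the standard one and is essentially what one finds in the cited Li--Peng reference: localize via the stopping times $\sigma_m$ of Definition \ref{Def4.4} to land in $M_\ast^p$, reduce $\varphi$ to $C_b^2$, run the partition-plus-Taylor argument, and identify the quadratic piece with the $d\langle B\rangle$ integral. One point you should tighten is the passage from $M_\ast^p$ integrands to the partition argument. In the $G$-framework the usual route is to first prove the formula when $\alpha,\eta,\beta$ are \emph{step} processes, where the Taylor expansion is clean and all the limits can be controlled by explicit moment bounds, and only then approximate general $\alpha,\eta,\beta\in M_\ast^p$ by step processes in the norms of Definition \ref{d1}; this is where Lemma \ref{le3} and the subadditivity of $\mathbb{\hat{E}}$ do the work. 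Your sketch runs the partition argument directly on the general $X$, which is fine classically but in the $G$-setting leaves you needing dominated-convergence-type arguments (for the remainder term $\sum_k R_k$ and for the quadratic replacement) that are not available; routing through step processes avoids this. Apart from that structural point, the plan is sound.
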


\begin{example}
For all $\varphi \in C^{2}(\mathbb{R})$ and $t\geq 0$,   we have
\begin{eqnarray*}
\varphi(B_{t})=\varphi(0)+\int_{0}^{t}\varphi_{x}(B_{s}
)dB_{s}+\frac{1}{2}\int_{0}^{t}\varphi_{xx}(B_{s})d\left \langle
B\right \rangle _{s}.
\end{eqnarray*}
\end{example}

\section{An intuitive approach to the local time}
We consider the  $G$-Brownian motion $B$.  Recall that $B$ is
continuous and a $P$-martingale, for all $P\in\mathcal {P}$ (see
(\ref{e22})), and its quadratic variation process $\langle B
\rangle$ is continuous and increasing outside a polar set $\mathcal
{N}$. Let us give a definition of the local time of $B$, which is
independent of the underlying probability measure $P\in\mathcal
{P}$. For all $a\in\mathbb{R}$, $t\in [0,T] $, and $\omega\in\Omega
\setminus \mathcal {N}$,
\begin{eqnarray*}
L^{a}_{t}(\omega)=\left\{
\begin{array}{rcl}
\overline{\lim\limits_{\varepsilon\downarrow0}}\dfrac{1}{2\varepsilon}\int_{0}^{t}1_{
(a-\varepsilon, a+\varepsilon)}(B_{s}(\omega))d\langle B
\rangle_{s}(\omega), &&
\overline{\lim\limits_{\varepsilon\downarrow0}}\dfrac{1}{2\varepsilon}\int_{0}^{t}1_{
(a-\varepsilon, a+\varepsilon)}(B_{s}(\omega))d\langle B \rangle_{s}(\omega)<\infty;\\
0, && \text{otherwise}.
\end{array}
\right.
\end{eqnarray*}
We now study $L^{a}$ under each probability $P\in\mathcal {P}$. For
this we consider the stochastic integral
\begin{eqnarray}\label{e1}
M_{t}^{P}=\int_{0}^{t}sgn(B_{s}-a)dB_{s}, \ t\in[0,T],
\end{eqnarray}
 under $P\in\mathcal {P}$. Indeed, under $P$ the $G$-Brownian motion is a continuous square integrable
 martingale, so that the above stochastic integral under $P$ is well-defined. We emphasize that the process
 $M^{P}$ is defined only $P-a.s.$ Let $L^{a,P}$ be the
local time associated with $B$ under $P$, defined by the relation:
\begin{eqnarray*}
L^{a, P}_{t}=|B_{t}-a|-|a|-M_{t}^{P},  \ t\in[0,T].
\end{eqnarray*}
It is well known that $L^{a, P}_{t}$ admits a $P$-modification that
 is continuous in $(t,a)$, and
\begin{eqnarray*}
L^{a,P}_{t}=\lim\limits_{\varepsilon\downarrow0}\dfrac{1}{2\varepsilon}\int_{0}^{t}1_{
(a-\varepsilon, a+\varepsilon)}(B_{s})d\langle B \rangle_{s},
P-a.s.,  \ t\in[0,T],
\end{eqnarray*}
Thus, due to the definition of  $L^{a}$, we have
$$L^{a}_{t}=L^{a,P}_{t}, P-a.s, t\in[0,T].$$ Consequently, $L^{a}$
 has a continuous $P$-modification, for all $P\in\mathcal {P}$.
However, $P$ is not dominated by a probability measure. Thus the
question, if we can find a continuous modification of $L^{a}$ or
even a jointly continuous modification of $(\alpha, t)\mapsto
 L_t^{\alpha}$ under $\mathbb{\hat{E}}$, i.e., with respect to  all $P\in\mathcal
 {P}$, is a nontrivial one which cannot be solved in the frame of the classical stochastic analysis.
  The other question which has its own importance is that of the integrability
 of $sgn(B_{\cdot}-a)$ in the framework of $G$-expectations.
Indeed, in (\ref{e1}) we have considered the stochastic integral of
$sgn(B_{\cdot}-a)$ under $P$, for each $P\in\mathcal {P}$
separately. However, the main difficulty is that the above family of
probability measures is not dominated by one of these  probability
measures. To overcome this difficulty,  we shall show that
$sgn(B_{\cdot}-a)$ belongs to a suitable space of processes
integrable  with respect  to the $G$-Brownian motion $B$. It should
be expected that this space is $M_{\ast}^{2}(0,T)$ (see Definition
\ref{d1}). This turns out to be correct if $\underline{\sigma}>0$,
but it is not clear at all  in the case of $\underline{\sigma}=0$.
For this reason a larger space $\widetilde{M}_{\ast}^{2}(0,T)$ (see
Definition \ref{d2}) is introduced in Section 4, and its
relationship with $M_{\ast}^{2}(0,T)$ is discussed.
 To get a jointly continuous modification of $(\alpha,
t)\mapsto L_t^{\alpha}$  under $\mathbb{\hat{E}}$ in the framework
of $G$-expectations, we use, in addition to the integrality of
$sgn(B_{\cdot}-a)$ with respect to the $G$-Brownian motion $B$, an
approximation approach which is different from the classical method.

\section{Local time and Tanaka formula  for the $G$-Brownian motion}
The objective of this section is to study the notion of local time
and to obtain Tanaka formula for the $G$-Brownian motion. First, we
shall   generalize It\^{o} integral with respect to the $G$-Brownian
motion, which plays an important role in what follows.

For each $\eta \in M_{b,0}(0,T)$, we  introduce the norm $$||\eta||_{\widetilde{M}^{p}_{*}(0,T)}=\Big( \hat{\mathbb{E}%
}[\int_{0}^{T}|\eta_{t}|^{p}d\langle B \rangle_{t}]\Big)^{1/p}$$ on
$ M_{b,0}(0,T)$. With respect to this norm, $M_{b,0}(0,T)$ can be
continuously extended to a Banach space.

\begin{definition}\label{d2}
For each $p\geq1$, we denote by $\widetilde{M}_{\ast}^{p}(0,T)$ the
completion of $M_{b,0}(0,T)$ under the norm
\[
||\eta||_{\widetilde{M}^{p}_{*}(0,T)}=\Big( \hat{\mathbb{E}}[\int_{0}^{T}|\eta_{t}|^{p}%
d\langle B \rangle_{t}]\Big)^{1/p}.
\]
\end{definition}

\begin{remark}
The elements of $\widetilde{M}_{\ast}^{p}(0,T)$ are defined only
$d\langle B \rangle_{t}dP$-a.e., for all $P\in\mathcal {P}$. While
for $\underline{\sigma}>0$, this means that
$\eta\in\widetilde{M}_{\ast}^{p}(0,T)$ is defined $dtdP$-a.e., for
all $P\in\mathcal {P}$ (and, may, hence, not be well defined with
respect to $P\in\mathcal {P}$ on $\Gamma\times\Omega$, for
$\Gamma\subset[0,T]$ of Lebesgue measure zero). In the case
$\underline{\sigma}=0$, this has the consequence that, if for some
$P\in\mathcal {P}, 0\leq t< t+\varepsilon\leq T$, $P\{\langle B
\rangle_{t+\varepsilon}-\langle B \rangle_{t}=0\}>0$, the elements
of $\widetilde{M}_{\ast}^{p}(0,T)$ are  not well defined  on
$[t,t+\varepsilon]\times \Omega$ under $P$.

\end{remark}

\begin{remark}\label{re1}
For every $p\geq 1$, it is easy to check that
$M_{\ast}^{p}(0,T)\subset\widetilde{M}_{\ast}^{p}(0,T)$. Moreover,
if $\underline{\sigma}>0$, then
$M_{\ast}^{p}(0,T)=\widetilde{M}_{\ast}^{p}(0,T).$   If
$\underline{\sigma}=0$, then for every $p\geq 1$,
$M_{\ast}^{p}(0,T)$ is a strict subset of
$\widetilde{M}_{\ast}^{p}(0,T)$. In fact, due to Denis, Hu and Peng
\cite{DHP}
\begin{eqnarray*}
\hat{\mathbb{E}}[\int_{0}^{T}\frac{1}{s}1_{\{B_{s}=0\}}d\langle B
\rangle_{ s}] &=& \sup\limits_{u\in \mathcal
{A}}E_{P_{0}}[\int_{0}^{T}\frac{1}{s}1_{\{\int_{0}^{s} u_{s}d
B_{s}=0\}}u_{s}^{2}ds],
\end{eqnarray*}
where $P_{0}$ is the Wiener measure on
$\Omega=C_{0}(\mathbb{R}_{+})$,

$\mathcal {F}_{t}:=\sigma\{B_{s},  0\leq s\leq t\}\vee\mathcal {N}$,
$\mathcal {N}$ is the collection of $P_{0}$-null sets, $\mathcal
{F}=\{\mathcal {F}_{t}\}_{t\geq 0}$,

 $\mathcal {A}:=\{u: u \ \text{is
a}\ \mathcal {F}_{t}-\text{adapted process such that}\ 0\leq u\leq
\overline{\sigma}\}$. \\ Given $u\in \mathcal {A}$, let us put
$M_{s}=\int_{0}^{s} u_{s}d B_{s}, s\in [0,T].$ Then $M$ is a
continuous square integrable martingale. Thanks to the time
transformation for continuous martingales (see  Theorem V.1.6 in
Revuz and Yor \cite{RY1999}), there exists a Brownian motion $N$ on
$(\Omega,\mathcal {F},P_{0})$ (or, eventually, an enlargement of
this probability space) such that $M_{s}=N_{<M>_{s}}, s\in [0,T].$
It follows that
\begin{eqnarray*}
&&E_{P_{0}}[\int_{0}^{T}1_{\{M_{s}=0\}}d <M>_{s}] =
E_{P_{0}}[\int_{0}^{T}1_{\{N_{<M>_{s}}=0\}}d <M>_{s}]\\
&=&E_{P_{0}}[\int_{0}^{<M>_{T}}1_{\{N_{s}=0\}}ds]
\leq E_{P_{0}}[\int_{0}^{\infty}1_{\{N_{s}=0\}}ds]\\
&=& \int_{0}^{\infty} P_{0}\{N_{s}=0\} ds=0.
\end{eqnarray*}
Consequently,
\begin{eqnarray*}
E_{P_{0}}[\int_{0}^{T}1_{\{M_{s}=0\}}d <M>_{s}] =0,
\end{eqnarray*}
and, hence,
\begin{eqnarray*}
0=E_{P_{0}}[\int_{0}^{T} \frac{1}{s}1_{\{M_{s}=0\}}d <M>_{s}]
=E_{P_{0}}[\int_{0}^{T}\frac{1}{s}1_{\{\int_{0}^{s} u_{s}d
B_{s}=0\}}u_{s}^{2}ds].
\end{eqnarray*}
This implies
\begin{eqnarray*}
\hat{\mathbb{E}}[\int_{0}^{T}\frac{1}{s}1_{\{B_{s}=0\}}d\langle B
\rangle_{ s}] =  \sup\limits_{u\in \mathcal
{A}}E_{P_{0}}[\int_{0}^{T}\frac{1}{s}1_{\{\int_{0}^{s} u_{s}d
B_{s}=0\}}u_{s}^{2}ds]=0,
\end{eqnarray*}
$i.e., \big\{s^{-\frac{1}{p}}1_{\{B_{s}=0\}}\big\}_{s\in[0,T]}$ can
be identified with the process identically equal to $0$ in
$\widetilde{M}_{\ast}^{p}(0,T)$, and so it belongs to this space.
 On the other hand, for $u_{s}=0,
s\in [0,T],$ as the optimal control,
\begin{eqnarray*}
\hat{\mathbb{E}}[\int_{0}^{T}\frac{1}{s}1_{\{B_{s}=0\}}ds] &=&
\sup\limits_{u\in \mathcal
{A}}E_{P_{0}}[\int_{0}^{T}\frac{1}{s}1_{\{\int_{0}^{s} u_{s}d
B_{s}=0\}}ds]\\
&=& \int_{0}^{T}\frac{1}{s}ds=+\infty.
\end{eqnarray*}
Therefore, $\big\{s^{-\frac{1}{p}}1_{\{B_{s}=0\}}\big\}_{s\in[0,T]}$
cannot belong to $M_{\ast}^{p}(0,T)$.
\end{remark}

For every $\eta \in M_{b,0}(0,T)$, the It\^{o} integral with respect
to the $G$-Brownian motion is defined in Definition \ref{de1}.
\begin{lemma}\label{le4}
{ { { \label{bdd}The mapping $I: M_{b,0}(0,T)\rightarrow \mathbb{L}
_{\ast}^{2}(\Omega_{T})$ is a  continuous and linear mapping. Thus
it can be continuously extended to
$I:\widetilde{M}_{\ast}^{2}(0,T)\rightarrow
\mathbb{L}_{\ast}^{2}(\Omega_{T})$. Moreover, for all $\eta \in
\widetilde{M}_{\ast}^{2}(0,T)$, we have
\begin{align*}
\mathbb{\hat{E}}[\int_{0}^{T}\eta_{s}dB_{s}]  &  =0,\  \  \\
\mathbb{\hat{E}}[(\int_{0}^{T}\eta_{s}dB_{s})^{2}]  &  \leq
\hat{\mathbb{E}}[\int_{0}^{T}\eta_{s}^{2}d\langle B \rangle_{s}]. %
\end{align*}
} } }
\end{lemma}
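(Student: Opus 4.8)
The plan is to establish the two displayed relations first for step processes $\eta\in M_{b,0}(0,T)$ and then to extend everything by density, so that the whole statement reduces to a uniform estimate on $M_{b,0}(0,T)$ plus a routine continuous-extension argument. Linearity of $I$ on $M_{b,0}(0,T)$ is immediate from Definition \ref{de1}. For a step process $\eta_{t}=\sum_{j=0}^{n-1}\xi_{j}1_{[t_{j},t_{j+1})}(t)$ we have $I(\eta)=\sum_{j=0}^{n-1}\xi_{j}(B_{t_{j+1}}-B_{t_{j}})$, and I would prove the first identity $\hat{\mathbb{E}}[I(\eta)]=0$ exactly as in Lemma \ref{le3}: conditioning on $\mathcal{F}_{t_{n-1}}$ and using that $B_{t_{n}}-B_{t_{n-1}}$ is $G$-normal with $\hat{\mathbb{E}}[B_{t_{n}}-B_{t_{n-1}}]=\hat{\mathbb{E}}[-(B_{t_{n}}-B_{t_{n-1}})]=0$ and independent of $\mathcal{F}_{t_{n-1}}$, so that the last increment contributes nothing; an induction on $n$ (applied also to $-I(\eta)$) then gives $\hat{\mathbb{E}}[I(\eta)]=0$.

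The quadratic estimate for step processes is the heart of the matter, and I would obtain it from the representation (\ref{e22}). Under each $P\in\mathcal{P}$ the process $B$ is a continuous square-integrable $P$-martingale whose $P$-quadratic variation coincides $P$-a.s.\ with the quasi-surely defined process $\langle B\rangle$ (the defining sums $\sum(B_{t_{i+1}}-B_{t_{i}})^{2}$ converge in $\mathbb{L}_{G}^{2}$, hence q.s.\ along a subsequence by Proposition \ref{pr1}, to $\langle B\rangle$, while under each $P$ they converge to the $P$-quadratic variation); moreover, for a step process $I(\eta)$ agrees $P$-a.s.\ with the classical It\^{o} integral. The classical It\^{o} isometry under $P$ therefore yields $E_{P}[I(\eta)^{2}]=E_{P}[\int_{0}^{T}\eta_{s}^{2}d\langle B\rangle_{s}]$, and taking the supremum over $P\in\mathcal{P}$ gives $\hat{\mathbb{E}}[I(\eta)^{2}]=\hat{\mathbb{E}}[\int_{0}^{T}\eta_{s}^{2}d\langle B\rangle_{s}]$, which is even sharper than the claimed inequality. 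I expect this step to be the main obstacle, and the alternative of arguing directly inside the $G$-expectation (a backward induction on the partition, in which the cross terms drop out because they are linear in an independent mean-zero increment) is more delicate precisely here: one must replace $(B_{t_{j+1}}-B_{t_{j}})^{2}$ by $\langle B\rangle_{t_{j+1}}-\langle B\rangle_{t_{j}}$ rather than by $\overline{\sigma}^{2}(t_{j+1}-t_{j})$, which is what separates this lemma from Lemma \ref{le3}.

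With both estimates in hand the extension is routine. The bound $\|I(\eta)\|_{2}^{2}=\hat{\mathbb{E}}[I(\eta)^{2}]\le\hat{\mathbb{E}}[\int_{0}^{T}\eta_{s}^{2}d\langle B\rangle_{s}]=\|\eta\|_{\widetilde{M}_{\ast}^{2}(0,T)}^{2}$ shows that $I$ is a bounded linear map (of norm at most $1$) from $(M_{b,0}(0,T),\|\cdot\|_{\widetilde{M}_{\ast}^{2}})$ into the Banach space $\mathbb{L}_{\ast}^{2}(\Omega_{T})$; since $M_{b,0}(0,T)$ is dense in $\widetilde{M}_{\ast}^{2}(0,T)$ by Definition \ref{d2}, $I$ extends uniquely to a continuous linear map on $\widetilde{M}_{\ast}^{2}(0,T)$. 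Finally I would pass both relations to the limit: for $\eta\in\widetilde{M}_{\ast}^{2}(0,T)$ pick step processes $\eta^{(k)}\to\eta$ in $\widetilde{M}_{\ast}^{2}(0,T)$, so that $I(\eta^{(k)})\to I(\eta)$ in $\mathbb{L}_{\ast}^{2}(\Omega_{T})$; then the subadditivity estimate $|\hat{\mathbb{E}}[X]-\hat{\mathbb{E}}[Y]|\le\hat{\mathbb{E}}[|X-Y|]\le\|X-Y\|_{2}$ together with continuity of the norm yields $\hat{\mathbb{E}}[I(\eta)]=0$ and $\hat{\mathbb{E}}[I(\eta)^{2}]\le\hat{\mathbb{E}}[\int_{0}^{T}\eta_{s}^{2}d\langle B\rangle_{s}]$, completing the proof.
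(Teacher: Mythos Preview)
The paper does not actually supply a proof of this lemma; it is stated without proof, immediately followed by the Burkh\"{o}lder--Davis--Gundy inequality (Lemma~\ref{le1}). Your proposal is correct, and in fact the central step you use---invoking the representation $\hat{\mathbb{E}}[\cdot]=\sup_{P\in\mathcal{P}}E_{P}[\cdot]$, applying the classical result under each $P$, and then taking the supremum---is precisely the technique the paper employs to prove Lemma~\ref{le1}. So your argument is entirely in the spirit of the paper's methodology and fills in what the author left to the reader.

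One small remark: you obtain the \emph{equality} $\hat{\mathbb{E}}[I(\eta)^{2}]=\hat{\mathbb{E}}[\int_{0}^{T}\eta_{s}^{2}\,d\langle B\rangle_{s}]$ for step processes via the classical It\^{o} isometry under each $P$, which is indeed sharper than the stated inequality. Your justification that the $G$-quadratic variation $\langle B\rangle$ agrees $P$-a.s.\ with the classical quadratic variation of $B$ under each $P$ (because $\mathbb{L}_{G}^{2}$-convergence dominates $L^{2}(P)$-convergence) is correct and is the one nontrivial point to check. The alternative ``intrinsic'' route you sketch---backward induction on the partition, replacing $(B_{t_{j+1}}-B_{t_{j}})^{2}$ by $\langle B\rangle_{t_{j+1}}-\langle B\rangle_{t_{j}}$ via the fact that their difference is a symmetric $G$-martingale increment---also works and avoids the representation, but is technically heavier; the route you chose is cleaner and matches the paper's preferred style.
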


Now we  establish the Burkh\"{o}lder-Davis-Gundy inequality for the
framework of the $G$-stochastic analysis, which will be needed in
what follows.
\begin{lemma}\label{le1}
\label{p3} For each $p>0$, there exists a  constant $c_{p}>0$ such
that \\(i) for all $\eta \in M_{\ast}^{2}(0,T)$,
\[
\hat{\mathbb{E}}[\sup_{0\leq t\leq T}|\int_{0}^{t}\eta_{s}dB_{s}|^{2p}%
]\leq c_{p}\hat{\mathbb{E}}[(\int_{0}^{T}\eta_{s}^{2}d\langle B
\rangle_{ s})^{p}]
\leq\overline{\sigma}^{2p}c_{p}\hat{\mathbb{E}}[(\int_{0}^{T}\eta_{s}^{2}ds)^{p}],
\]
\[
\hat{\mathbb{E}}[\sup_{0\leq t\leq
T}|\int_{0}^{t}\eta_{s}dB_{s}|^{2p}] \geq
\frac{1}{c_{p}}\hat{\mathbb{E}}[(\int_{0}^{T}\eta_{s}^{2}d\langle B
\rangle_{ s})^{p}]
\geq\frac{\underline{\sigma}^{2p}}{c_{p}}\hat{\mathbb{E}}[(\int_{0}^{T}\eta_{s}^{2}ds)^{p}];
\]
(ii) for all $\eta \in \widetilde{M}_{\ast}^{2}(0,T)$,
\[
\frac{1}{c_{p}}\hat{\mathbb{E}}[(\int_{0}^{T}\eta_{s}^{2}d\langle B
\rangle_{ s})^{p}]\leq \hat{\mathbb{E}}[\sup_{0\leq t\leq T}|\int_{0}^{t}\eta_{s}dB_{s}|^{2p}%
]\leq c_{p}\hat{\mathbb{E}}[(\int_{0}^{T}\eta_{s}^{2}d\langle B
\rangle_{ s})^{p}].
\]
\end{lemma}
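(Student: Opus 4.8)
The plan is to transfer the classical Burkh\"older--Davis--Gundy inequality to the sublinear setting through the representation $\hat{\mathbb{E}}[\cdot]=\sup_{P\in\mathcal{P}}E_P[\cdot]$ from (\ref{e22}). The crucial observation is that for each fixed $P\in\mathcal{P}$ the coordinate process $B$ is a continuous square-integrable $P$-martingale whose $P$-quadratic variation coincides q.s. with the $G$-quadratic variation $\langle B\rangle$, so that the classical BDG inequality is available under $P$ with a constant $c_p$ that depends only on $p$ and not on $P$. Since one and the same universal $c_p$ serves every member of $\mathcal{P}$ simultaneously, taking the supremum over $P$ should convert the family of classical estimates into the desired $G$-estimate.

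First I would fix $P\in\mathcal{P}$ and identify the $G$-It\^o integral $\int_0^t\eta_s\,dB_s$ with the ordinary It\^o integral of $\eta$ against $B$ computed under $P$. For a step process $\eta\in M_{b,0}(0,T)$ this is immediate, since both integrals equal $\sum_j\xi_j(B_{t_{j+1}}-B_{t_j})$ by Definition \ref{de1}. For a general $\eta\in\widetilde{M}_\ast^2(0,T)$ (respectively $M_\ast^2(0,T)$) I would approximate by step processes $\eta^n\to\eta$ in $\|\cdot\|_{\widetilde{M}^2_*(0,T)}$; by Lemma \ref{le4} the integrals $I(\eta^n)$ converge to $I(\eta)$ in $\mathbb{L}_\ast^2(\Omega_T)$, hence in $L^2(P)$ because $E_P[\,\cdot\,]\le\hat{\mathbb{E}}[\,\cdot\,]$, while Proposition \ref{pr1} furnishes a subsequence converging q.s. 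Since the classical $P$-integral is the $L^2(P)$-limit of the same step integrals, the two limits agree $P$-a.s.; moreover the bound $E_P[\int_0^T\eta_s^2\,d\langle B\rangle_s]\le\hat{\mathbb{E}}[\int_0^T\eta_s^2\,d\langle B\rangle_s]<\infty$ guarantees that $\eta$ is $P$-integrable against $B$, so $I(\eta)$ is a continuous $P$-martingale with quadratic variation $\int_0^\cdot\eta_s^2\,d\langle B\rangle_s$.

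Once this identification is in hand, the classical BDG inequality under $P$ reads
$$\frac{1}{c_p}E_P[(\int_0^T\eta_s^2\,d\langle B\rangle_s)^p]\le E_P[\sup_{0\le t\le T}|\int_0^t\eta_s\,dB_s|^{2p}]\le c_p\,E_P[(\int_0^T\eta_s^2\,d\langle B\rangle_s)^p],$$
with $c_p$ independent of $P$. Taking $\sup_{P\in\mathcal{P}}$ and using $\hat{\mathbb{E}}=\sup_P E_P$ yields the upper bound directly; for the lower bound I would use that $a_P\ge b_P$ for every $P$ implies $\sup_P a_P\ge\sup_P b_P$. This produces the two central $d\langle B\rangle$-inequalities, which is exactly part (ii) and the middle terms of part (i). To obtain the outer $ds$-terms in part (i) I would invoke the pathwise comparison $\underline{\sigma}^2\,dt\le d\langle B\rangle_t\le\overline{\sigma}^2\,dt$, which holds q.s. for the $G$-Brownian motion, raise it to the power $p$, and apply the monotonicity of $\hat{\mathbb{E}}$; the restriction to $\eta\in M_\ast^2(0,T)$ in part (i) is precisely what keeps the $ds$-integrals meaningful, whereas part (ii) is stated on the larger space $\widetilde{M}_\ast^2(0,T)$ where only the $d\langle B\rangle$-form survives.

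The step I expect to be the main obstacle is the simultaneous identification of the $G$-It\^o integral with the classical $P$-integral across the entire family $\mathcal{P}$, together with the verification that passing to $\sup_P$ preserves the \emph{lower} inequality. Everything hinges on the universality of the classical BDG constant $c_p$ and on the fact that $\mathbb{L}_\ast^2$-convergence dominates $L^2(P)$-convergence uniformly in $P$; once these are secured, both directions of the estimate follow with the single constant $c_p$ demanded by the statement.
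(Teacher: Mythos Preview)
Your proposal is correct and follows essentially the same route as the paper: represent $\hat{\mathbb{E}}$ as $\sup_{P\in\mathcal{P}}E_P$, observe that under each $P$ the $G$-It\^o integral is a continuous $P$-martingale so the classical BDG inequality applies with a universal constant $c_p$, and then take the supremum over $P$; the $ds$-bounds in (i) come from the q.s. comparison $\underline{\sigma}^2\,dt\le d\langle B\rangle_t\le\overline{\sigma}^2\,dt$. The paper is somewhat terser---it invokes $\hat{\mathbb{E}}[\alpha\int_t^T\eta_s\,dB_s]=0$ to deduce the $P$-martingale property directly rather than spelling out the identification of the $G$- and $P$-integrals via step-process approximation---but the substance is the same.
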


\begin{proof} We only give the proof of (i), i.e., for $\eta \in
M_{\ast}^{2}(0,T)$. The proof of (ii) is similar.
Since for each $\alpha \in L_{b}(\Omega_{t})$ we have%
\[
\mathbb{\hat{E}}[\alpha \int_{t}^{T}\eta_{s}dB_{s}]=0,
\]
then the process $\int_{0}^{\cdot}\eta _{s}dB_{s}$ is a
$P$-martingale, for all $P\in \mathcal{P}$. Thus from the classical
Burkh\"{o}lder-Davis-Gundy inequality and the relation
$$\underline{\sigma}^{2}t\leq \langle B\rangle _{t} \leq
\bar{\sigma}^{2}t,\ q.s,$$  we have, for all $P\in \mathcal{P}$,
\[
E_{P}[\sup_{0\leq t\leq T}|\int_{0}^{t}\eta_{s}dB_{s}|^{2p}] \leq
c_{p}E_{P}[(\int_{0}^{T}\eta_{s}^{2}d\langle B \rangle_{ s})^{p}]
\leq\overline{\sigma}^{2p}c_{p}E_{P}[(\int_{0}^{T}\eta_{s}^{2}ds)^{p}],
\]
and
\[
E_{P}[\sup_{0\leq t\leq T}|\int_{0}^{t}\eta_{s}dB_{s}|^{2p}] \geq
\frac{1}{c_{p}}E_{P}[(\int_{0}^{T}\eta_{s}^{2}d\langle B \rangle_{
s})^{p}]
\geq\frac{\underline{\sigma}^{2p}}{c_{p}}E_{P}[(\int_{0}^{T}\eta_{s}^{2}ds)^{p}.]
\]
We emphasize that the constant $c_{p}$ coming from the classical
Burkh\"{o}lder-Davis-Gundy inequality, only depends on $p$ but not
on the underlaying probability measure $P$. Consequently, by taking
the supremum over all $P\in \mathcal{P}$ we have
\[
\hat{\mathbb{E}}[\sup_{0\leq t\leq T}|\int_{0}^{t}\eta_{s}dB_{s}|^{2p}%
]\leq c_{p}\hat{\mathbb{E}}[(\int_{0}^{T}\eta_{s}^{2}d\langle B
\rangle_{ s})^{p}]
\leq\overline{\sigma}^{2p}c_{p}\hat{\mathbb{E}}[(\int_{0}^{T}\eta_{s}^{2}ds)^{p}],
\]
and
\[
\hat{\mathbb{E}}[\sup_{0\leq t\leq
T}|\int_{0}^{t}\eta_{s}dB_{s}|^{2p}] \geq
\frac{1}{c_{p}}\hat{\mathbb{E}}[(\int_{0}^{T}\eta_{s}^{2}d\langle B
\rangle_{ s})^{p}]
\geq\frac{\underline{\sigma}^{2p}}{c_{p}}\hat{\mathbb{E}}[(\int_{0}^{T}\eta_{s}^{2}ds)^{p}].
\]
The proof is complete.
\end{proof}\vskip2mm

The following proposition is very important for our approach.
\begin{proposition}\label{pr}
For any real  $a$,   all $\delta> 0 $ and $t\geq 0 $, we have
\begin{eqnarray*}
\hat{\mathbb{E}}[\int_{0}^{t}1_{[a,a+\delta]}(B_{s})d\langle B
\rangle_{ s}] \leq C\delta.
\end{eqnarray*}
Moreover, if $\underline{\sigma}>0$, then we also have
\begin{eqnarray*}
\hat{\mathbb{E}}[\int_{0}^{t}1_{[a,a+\delta]}(B_{s})ds] \leq
C\delta.
\end{eqnarray*}
 Here $C$ is a constant which depends on $t$ but not on  $\delta$ neither on $a$.
\end{proposition}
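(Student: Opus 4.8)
The plan is to reduce the estimate, measure by measure, to a statement about a genuine Brownian motion and then pass to the supremum. Fix $P\in\mathcal{P}$. As recalled in Section 3, under $P$ the $G$-Brownian motion $B$ is a continuous square integrable martingale whose $P$-quadratic variation is exactly $\langle B\rangle$, and from the relation $\underline{\sigma}^2 s\le\langle B\rangle_s\le\overline{\sigma}^2 s$ (q.s.) used in the proof of Lemma \ref{le1} we know in particular that $\langle B\rangle_t\le\overline{\sigma}^2 t$. By the Dambis--Dubins--Schwarz theorem applied under $P$ (enlarging the space if necessary), there is a $P$-Brownian motion $\beta=\beta^P$ with $B_s=\beta_{\langle B\rangle_s}$ for all $s\in[0,t]$, $P$-a.s.

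Next I would perform the time change in the Stieltjes integral. Since $\langle B\rangle$ is continuous and increasing and $1_{[a,a+\delta]}\ge 0$, the substitution $u=\langle B\rangle_s$ gives, $P$-a.s.,
\[
\int_0^t 1_{[a,a+\delta]}(B_s)\,d\langle B\rangle_s=\int_0^{\langle B\rangle_t}1_{[a,a+\delta]}(\beta_u)\,du\le\int_0^{\overline{\sigma}^2 t}1_{[a,a+\delta]}(\beta_u)\,du,
\]
the last step using $\langle B\rangle_t\le\overline{\sigma}^2 t$ and nonnegativity of the integrand. Taking $E_P$, using Fubini's theorem and the elementary Gaussian bound $P(\beta_u\in[a,a+\delta])=\int_a^{a+\delta}(2\pi u)^{-1/2}e^{-y^2/2u}\,dy\le\delta(2\pi u)^{-1/2}$, I obtain
\[
E_P\Big[\int_0^t 1_{[a,a+\delta]}(B_s)\,d\langle B\rangle_s\Big]\le\int_0^{\overline{\sigma}^2 t}\frac{\delta}{\sqrt{2\pi u}}\,du=\overline{\sigma}\sqrt{\tfrac{2t}{\pi}}\,\delta=:C\delta.
\]
The constant $C$ depends only on $t$ and $\overline{\sigma}$ and, crucially, is the \emph{same} for every $P$, since the density bound $(2\pi u)^{-1/2}$ holds for every Brownian motion regardless of the choice of $P$. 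Taking the supremum over $P\in\mathcal{P}$ and invoking the representation \eqref{e22} yields the first claim. For the second assertion, assume $\underline{\sigma}>0$; then $\underline{\sigma}^2 s\le\langle B\rangle_s$ gives the q.s.\ domination of measures $ds\le\underline{\sigma}^{-2}\,d\langle B\rangle_s$, so $\int_0^t 1_{[a,a+\delta]}(B_s)\,ds\le\underline{\sigma}^{-2}\int_0^t 1_{[a,a+\delta]}(B_s)\,d\langle B\rangle_s$ q.s., and applying $\hat{\mathbb{E}}$ together with the first part gives the bound with constant $C/\underline{\sigma}^2$.

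The main obstacle is securing a constant uniform in both $a$ and $P$. A direct attempt based on the crude pathwise bound $d\langle B\rangle_s\le\overline{\sigma}^2\,ds$ fails: it would reduce the problem to controlling $\sup_P\int_0^t P(B_s\in[a,a+\delta])\,ds$, but when $\underline{\sigma}=0$ some $P$ may let $B$ sit at a level, so $B_s$ can carry an atom and $\int_0^t 1_{[a,a+\delta]}(B_s)\,ds$ need not be $O(\delta)$ — consistent with the $ds$-estimate being claimed only for $\underline{\sigma}>0$. It is precisely the time change that repairs this, since the frozen pieces contribute nothing to $d\langle B\rangle$ and the occupation of a $\delta$-interval by a true Brownian motion is governed by the uniform Gaussian density bound. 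Thus the real work is to justify the Dambis--Dubins--Schwarz reduction under each $P$ and to confirm the $P$-independence of the resulting constant.
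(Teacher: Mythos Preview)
Your argument is correct and is a genuinely different route from the paper's. The paper works entirely inside the $G$-calculus: it builds a $C^2$ function $\varphi$ with $\varphi(0)=0$, $|\varphi'|\le 4\delta^{-1}$, and $\varphi''\ge\delta^{-2}$ on $[a,a+\delta]$ (supported on $[a-\delta,a+2\delta]$), applies the $G$-It\^o formula (Theorem~\ref{th1}) to $\varphi(B_t)$, and reads off the \emph{pathwise} inequality
\[
\int_0^t 1_{[a,a+\delta]}(B_s)\,d\langle B\rangle_s\ \le\ 8\delta\,|B_t|\,-\,2\delta^2\int_0^t\varphi'(B_s)\,dB_s,
\]
from which the first bound follows by taking $\hat{\mathbb{E}}$ and using Lemma~\ref{le3}. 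You instead fix $P\in\mathcal{P}$, use that $\langle B\rangle$ is the $P$-quadratic variation of the $P$-martingale $B$, apply Dambis--Dubins--Schwarz, and bound the resulting Brownian occupation time via the uniform Gaussian density estimate; uniformity in $P$ then lets you pass to the supremum. Both approaches handle the second claim identically, via $ds\le\underline{\sigma}^{-2}d\langle B\rangle_s$.

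What each buys: your reduction is short and yields an explicit constant $\overline{\sigma}\sqrt{2t/\pi}$, but it leans on classical tools outside the $G$-framework (DDS with possible space enlargement, and the representation $\hat{\mathbb{E}}=\sup_P E_P$ for this bounded measurable functional). The paper's argument is intrinsic to the $G$-calculus and, more importantly, its pathwise inequality immediately upgrades to higher moments,
\[
\hat{\mathbb{E}}\Big[\Big(\int_0^t 1_{[a,a+\delta]}(B_s)\,d\langle B\rangle_s\Big)^p\Big]\le C_p\,\delta^p,
\]
which the paper reuses verbatim in the proof of Theorem~\ref{th2} (joint continuity of the local time). Your route gives only the first-moment bound directly; recovering the $L^p$ version would require an extra step (e.g.\ classical local-time moment bounds for $\beta$ under each $P$).
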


\begin{proof}
 For  $\delta>0$, we define the $C^{2}-$ function
 $\varphi:\mathbb{R}\rightarrow\mathbb{R}$ such that $\varphi(0)=0,\ |\varphi'(x)|\leq \delta^{-1}
 $, $x\in (\infty, a-\delta]$, and
\[
\varphi''(x)=\left\{
\begin{array}{lll}
0; & if\ x \leq a-\delta,\\
\dfrac{x-a+\delta}{\delta^{3}}; & if\ a-\delta < x \leq a,\\
\dfrac{1}{\delta^{2}}; & if\ a< x \leq a+\delta,\\
-\dfrac{x-a-2\delta}{\delta^{3}}; & if\ a+\delta < x \leq a+2\delta,\\
0; & if\ x \geq a+2\delta.\\
\end{array}
\right.
\]
Then we have $|\varphi'(x)|\leq 4\delta^{-1}$ and $\varphi''(x)\leq
\delta^{-2}$.

By applying the generalized It\^o formula for the $G$-Brownian
motion (see Theorem \ref{th1}) to $\varphi(B_{t})$, we deduce that

\begin{eqnarray*}
\varphi(B_{t})=\int_{0}^{t}\varphi'(B_{s}%
)dB_{s}+\frac{1}{2}\int_{0}^{t}\varphi''(B_{s})d\left \langle
B\right \rangle _{s}.
\end{eqnarray*}
Therefore,
\begin{eqnarray*}\label{}
\int_{0}^{t}1_{[a,a+\delta]}(B_{s})d\langle B \rangle_{ s}
&\leq& \delta^{2}\int_{0}^{t}\varphi''(B_{s})d\langle B \rangle_{ s}\nonumber\\
&=&2\delta^{2}\varphi(B_{t})-2\delta^{2}\int_{0}^{t}\varphi'(B_{s}
)dB_{s}\nonumber\\
&\leq& 8\delta|B_{t}|-2\delta^{2}\int_{0}^{t}\varphi'(B_{s} )dB_{s}.
\end{eqnarray*}
From the above inequalities and Lemma \ref{le3} it follows that
\begin{eqnarray*}\label{}
\mathbb{\hat{E}}[\int_{0}^{t}1_{[a,a+\delta]}(B_{s})d\langle B
\rangle_{ s}] &\leq&
\mathbb{\hat{E}}[8\delta|B_{t}|-2\delta^{2}\int_{0}^{t}\varphi'(B_{s})dB_{s}]\nonumber\\
&\leq&
8\delta\mathbb{\hat{E}}[|B_{t}|]+2\delta^{2}\mathbb{\hat{E}}[-\int_{0}^{t}\varphi'(B_{s})dB_{s}]\nonumber\\
&=& 8\delta\mathbb{\hat{E}}[|B_{t}|]= C\delta.
\end{eqnarray*}
From Lemma \ref{le1} we have
\begin{eqnarray*}
\hat{\mathbb{E}}[\int_{0}^{t}1_{[a,a+\delta]}(B_{s})ds] \leq
\dfrac{1}{\underline{\sigma}^{2}}\hat{\mathbb{E}}[\int_{0}^{t}1_{[a,a+\delta]}(B_{s})d\langle
B \rangle_{ s}]\leq C\delta.
\end{eqnarray*}
 The proof is complete.
\end{proof}\vskip3mm

From the above proposition and Lemma \ref{le1}, we can derive
interesting results as follows:
\begin{corollary}\label{co1}
For any real number $a$ and $t\geq 0 $, we have
\begin{eqnarray*}
\int_{0}^{t}1_{\{ a \}}(B_{s})d\langle B \rangle_{s}=0,\ q.s.
\end{eqnarray*}
\end{corollary}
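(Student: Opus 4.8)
The plan is to sandwich the indicator of the singleton $\{a\}$ by the indicator of a shrinking interval and to exploit the linear-in-$\delta$ estimate of Proposition \ref{pr}. Since $\langle B\rangle$ is continuous and increasing outside a polar set, the measure $d\langle B\rangle_s(\omega)$ is nonnegative for q.s.\ $\omega$, so the elementary pointwise inequality $1_{\{a\}}(x)\leq 1_{[a,a+\delta]}(x)$ (valid for every $\delta>0$) integrates to
$$0\leq \int_0^t 1_{\{a\}}(B_s)\,d\langle B\rangle_s \leq \int_0^t 1_{[a,a+\delta]}(B_s)\,d\langle B\rangle_s,\quad q.s.$$

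First I would apply the monotonicity of $\hat{\mathbb{E}}$ to this chain together with Proposition \ref{pr}, which gives, for every $\delta>0$,
$$\hat{\mathbb{E}}\Big[\int_0^t 1_{\{a\}}(B_s)\,d\langle B\rangle_s\Big] \leq \hat{\mathbb{E}}\Big[\int_0^t 1_{[a,a+\delta]}(B_s)\,d\langle B\rangle_s\Big]\leq C\delta.$$
The crucial point is that the left-hand side does not depend on $\delta$, while the bound $C\delta$ tends to $0$. Letting $\delta\downarrow 0$ therefore forces $\hat{\mathbb{E}}[\int_0^t 1_{\{a\}}(B_s)\,d\langle B\rangle_s]=0$.

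Finally I would invoke the characterization $\{X\in L^0(\Omega):\hat{\mathbb{E}}[|X|]=0\}=\mathcal{N}$ recorded in Section 2. Writing $X:=\int_0^t 1_{\{a\}}(B_s)\,d\langle B\rangle_s$, the nonnegativity established above gives $|X|=X$, so $\hat{\mathbb{E}}[|X|]=\hat{\mathbb{E}}[X]=0$; hence $X\in\mathcal{N}$, that is, $X=0$ quasi-surely, which is exactly the assertion.

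The analytic substance is entirely contained in Proposition \ref{pr}: once its bound, uniform in $a$ and linear in $\delta$, is available, the corollary reduces to a soft limiting argument. The only step demanding a little care is the applicability of $\hat{\mathbb{E}}$ in the first displayed chain, namely that $\int_0^t 1_{\{a\}}(B_s)\,d\langle B\rangle_s$ is a bona fide nonnegative measurable random variable to which the monotonicity of the sublinear expectation applies and through which the supremum $\sup_{P\in\mathcal{P}}$ passes; this I expect to be the main (and only mild) obstacle.
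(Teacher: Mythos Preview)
Your argument is correct and is exactly the route the paper has in mind: the corollary is stated immediately after Proposition \ref{pr} with the remark that it follows from that proposition (the reference to Lemma \ref{le1} concerns the second assertion about the $ds$-integral when $\underline{\sigma}>0$). Your concern about applicability of $\hat{\mathbb{E}}$ is harmless here, since the pathwise Lebesgue--Stieltjes integral $\int_0^t 1_{\{a\}}(B_s)\,d\langle B\rangle_s$ is $\mathcal{B}(\Omega)$-measurable and bounded by $\langle B\rangle_t\leq \overline{\sigma}^{2}t$, hence lies in $L_b(\Omega)\subset \mathcal{L}^1$.
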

Moreover, if $\underline{\sigma}>0$, then we have
\begin{eqnarray*}
\int_{0}^{t}1_{\{ a \}}(B_{s})ds=0,\ q.s.
\end{eqnarray*}

The following lemma will play an important role in what follows and
its proof will be given after Theorem \ref{th3}.
\begin{lemma}\label{le20}
 For each $a\in\mathbb{R}$, the process
 $sgn(B_{\cdot}-a)\in\widetilde{M}_{\ast}^{2}(0,T)$.
\end{lemma}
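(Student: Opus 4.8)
The plan is to realize $sgn(B_\cdot - a)$ as a $\widetilde{M}_{\ast}^{2}(0,T)$-limit of processes of the form $f(B_\cdot)$ with $f$ bounded and Lipschitz, and then to show separately that each such $f(B_\cdot)$ is itself approximable by step processes from $M_{b,0}(0,T)$. Since $\widetilde{M}_{\ast}^{2}(0,T)$ is a Banach space by Definition \ref{d2}, membership of the limit follows once both approximation steps are quantitatively controlled. The whole point is to carry this out without invoking a dominated convergence theorem, which is not at our disposal here.

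For the first step I would fix, for each $n\geq 1$, a Lipschitz function $f_{n}:\mathbb{R}\to[-1,1]$ with $f_{n}(x)=sgn(x-a)$ for $|x-a|\geq 1/n$ and $f_{n}$ affine on $[a-1/n,a+1/n]$, so that its Lipschitz constant is $n$. Since $f_{n}$ and $sgn(\cdot-a)$ coincide outside $(a-1/n,a+1/n)$ and are both bounded by $1$, one has the pointwise bound $|f_{n}(x)-sgn(x-a)|^{2}\leq 4\,1_{[a-1/n,a+1/n]}(x)$. Applying Proposition \ref{pr} with left endpoint $a-1/n$ and $\delta=2/n$ then gives
\[
\hat{\mathbb{E}}\Big[\int_{0}^{T}|f_{n}(B_{s})-sgn(B_{s}-a)|^{2}d\langle B\rangle_{s}\Big]\leq 4\,\hat{\mathbb{E}}\Big[\int_{0}^{T}1_{[a-1/n,a+1/n]}(B_{s})d\langle B\rangle_{s}\Big]\leq \frac{8C}{n},
\]
so that $f_{n}(B_\cdot)\to sgn(B_\cdot - a)$ in $\widetilde{M}_{\ast}^{2}(0,T)$. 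The convention for $sgn$ at $0$ is immaterial, since Corollary \ref{co1} yields $\int_{0}^{t}1_{\{a\}}(B_{s})d\langle B\rangle_{s}=0$ q.s.

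The second step is the crux. For a partition $0=t_{0}^{m}<\cdots<t_{k_{m}}^{m}=T$ with mesh $\Delta_{m}\to 0$, set $\eta^{n,m}_{s}=\sum_{j}f_{n}(B_{t_{j}^{m}})1_{[t_{j}^{m},t_{j+1}^{m})}(s)$; because $f_{n}$ is bounded continuous and $B_{t_{j}^{m}}$ is $\mathcal{B}(\Omega_{t_{j}^{m}})$-measurable, each $f_{n}(B_{t_{j}^{m}})\in L_{b}(\Omega_{t_{j}^{m}})$, hence $\eta^{n,m}\in M_{b,0}(0,T)$. Here the classical argument would use dominated convergence, which I would replace by an explicit estimate: using the Lipschitz bound $|f_{n}(B_{s})-f_{n}(B_{t_{j}^{m}})|\leq n|B_{s}-B_{t_{j}^{m}}|$, the q.s. inequality $d\langle B\rangle_{s}\leq\overline{\sigma}^{2}ds$, the subadditivity estimate $\hat{\mathbb{E}}[\int_{0}^{T}X_{s}\,ds]\leq\int_{0}^{T}\hat{\mathbb{E}}[X_{s}]\,ds$, and $\hat{\mathbb{E}}[|B_{s}-B_{t_{j}^{m}}|^{2}]=\overline{\sigma}^{2}(s-t_{j}^{m})$, I obtain
\[
\hat{\mathbb{E}}\Big[\int_{0}^{T}|f_{n}(B_{s})-\eta^{n,m}_{s}|^{2}d\langle B\rangle_{s}\Big]\leq n^{2}\overline{\sigma}^{4}\sum_{j}\int_{t_{j}^{m}}^{t_{j+1}^{m}}(s-t_{j}^{m})\,ds\leq \frac{n^{2}\overline{\sigma}^{4}T}{2}\,\Delta_{m},
\]
which tends to $0$ as $m\to\infty$ for fixed $n$. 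This shows $f_{n}(B_\cdot)\in\widetilde{M}_{\ast}^{2}(0,T)$.

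Combining the two steps, $sgn(B_\cdot - a)$ is the $\widetilde{M}_{\ast}^{2}(0,T)$-limit of the elements $f_{n}(B_\cdot)\in\widetilde{M}_{\ast}^{2}(0,T)$, and completeness of that Banach space concludes the proof. I expect the main obstacle to be precisely the second step: rather than interchanging $\sup_{P\in\mathcal{P}}$ with the limit over refining partitions (which fails because $\mathcal{P}$ is not dominated), the Lipschitz-plus-second-moment estimate produces a bound of order $\Delta_{m}$ that holds under $\hat{\mathbb{E}}$ directly, uniformly in $P$.
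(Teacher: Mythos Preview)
Your proof is correct and follows essentially the same route as the paper. Your piecewise-linear approximant $f_{n}$ is precisely the paper's $\varphi'_{\varepsilon}(\cdot-a)$ with $\varepsilon=1/n$, and both arguments proceed by (i) using Proposition~\ref{pr} to control $\|f_{n}(B_\cdot)-sgn(B_\cdot-a)\|_{\widetilde{M}^{2}_{*}}$ and (ii) using the Lipschitz property of $f_{n}$ together with the increment estimate for $B$ to approximate $f_{n}(B_\cdot)$ by step processes in $M_{b,0}(0,T)$. The only cosmetic difference is that the paper inserts an additional mollification layer $\varphi_{n}=\varphi_{\varepsilon}*\eta_{n}$ between $\varphi'_{\varepsilon}$ and the step process, which is carried over from the proof of Theorem~\ref{th3} but is not actually needed for this lemma; your version is slightly more direct.
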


Now we can state Tanaka formula for the $G$-Brownian motion as
follows. For this we denote
\[
sgn(x)=\left\{
\begin{array}{lll}
1; & x> 0,\\
0; & x=0,\\
-1; &  x< 0.
\end{array}
\right.
\]
\begin{theorem}\label{th3}
For any real number $a$ and all $t\geq 0 $, we have
\begin{eqnarray*}
|B_{t}-a|=|a|+\int_{0}^{t}sgn(B_{s}-a)dB_{s}+L^{a}_{t},
\end{eqnarray*}
where
\begin{eqnarray*}
L^{a}_{t}=\lim\limits_{\varepsilon\rightarrow0}\frac{1}{2\varepsilon}\int_{0}^{t}1_{
(a-\varepsilon, a+\varepsilon)}(B_{s})d\langle B \rangle_{s}, \
(\lim \text{in} \ \mathbb{L}^{2}),
\end{eqnarray*}
and $L^{a}$ is an increasing process.\\
 $L^{a}$ is called the local time for G-Brownian
motion at $a$.
\end{theorem}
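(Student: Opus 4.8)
The plan is to approximate $|x-a|$ by smooth convex functions, apply the generalized It\^o formula of Theorem \ref{th1}, and let the smoothing parameter tend to zero. For $\varepsilon>0$ I would set
$$\varphi_\varepsilon(x)=\begin{cases} |x-a|, & |x-a|\ge\varepsilon,\\[2pt] \dfrac{(x-a)^2}{2\varepsilon}+\dfrac{\varepsilon}{2}, & |x-a|<\varepsilon,\end{cases}$$
which is convex, of class $C^1$, satisfies $|\varphi_\varepsilon'|\le 1$ and $\varphi_\varepsilon''=\tfrac{1}{\varepsilon}1_{(a-\varepsilon,a+\varepsilon)}$ almost everywhere, and obeys $0\le\varphi_\varepsilon(x)-|x-a|\le\varepsilon/2$ for every $x$. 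Since $\varphi_\varepsilon$ is only $C^1$, I would first mollify it into genuinely $C^2$ functions, apply Theorem \ref{th1} to the mollifications, and pass to the limit in the mollification parameter; the $d\langle B\rangle$-term stays controlled here because $\varphi_\varepsilon''$ is bounded by $1/\varepsilon$ and supported near $a$, so the occupation estimate of Proposition \ref{pr} keeps these integrals under control despite the absence of a dominated convergence theorem. This would yield, for each $\varepsilon>0$,
$$\varphi_\varepsilon(B_t)=\varphi_\varepsilon(0)+\int_0^t\varphi_\varepsilon'(B_s)\,dB_s+\frac{1}{2\varepsilon}\int_0^t 1_{(a-\varepsilon,a+\varepsilon)}(B_s)\,d\langle B\rangle_s.$$

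Next I would let $\varepsilon\downarrow 0$ term by term, in $\mathbb{L}^2$. The constant term is immediate since $\varphi_\varepsilon(0)\to|a|$, and the left-hand side converges because $|\varphi_\varepsilon(B_t)-|B_t-a||\le\varepsilon/2$ uniformly. For the stochastic integral I would invoke Lemma \ref{le4} together with Lemma \ref{le20}: since $sgn(B_\cdot-a)\in\widetilde{M}_\ast^2(0,T)$, the isometry-type bound gives
$$\hat{\mathbb{E}}\Big[\Big(\int_0^t(\varphi_\varepsilon'(B_s)-sgn(B_s-a))\,dB_s\Big)^2\Big]\le\hat{\mathbb{E}}\Big[\int_0^t(\varphi_\varepsilon'(B_s)-sgn(B_s-a))^2\,d\langle B\rangle_s\Big].$$
The integrand vanishes off $\{|B_s-a|\le\varepsilon\}$ and is bounded by $4$, so Proposition \ref{pr} bounds the right-hand side by $C\varepsilon\to 0$. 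Consequently the second-derivative term must also converge in $\mathbb{L}^2$, and its limit is by definition $L^a_t$; rearranging the four limits gives exactly the asserted identity $|B_t-a|=|a|+\int_0^t sgn(B_s-a)\,dB_s+L^a_t$.

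To see that $L^a$ is increasing I would note that each approximant $t\mapsto\frac{1}{2\varepsilon}\int_0^t 1_{(a-\varepsilon,a+\varepsilon)}(B_s)\,d\langle B\rangle_s$ is nondecreasing in $t$, because the integrand is nonnegative and $\langle B\rangle$ is increasing outside a polar set; by Proposition \ref{pr1} I may extract a subsequence converging to $L^a$ quasi-surely, so monotonicity in $t$ is preserved q.s.

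I expect the main obstacle to be the rigorous application of It\^o's formula to the non-$C^2$ function $\varphi_\varepsilon$: Theorem \ref{th1} requires $\varphi\in C^2(\mathbb{R})$, whereas $\varphi_\varepsilon''$ has jumps at $a\pm\varepsilon$. The delicate point is that passing to the limit in the mollification inside the $d\langle B\rangle$-integral cannot simply invoke dominated convergence, which is unavailable in the $G$-framework, so it must be handled through the quantitative occupation estimate of Proposition \ref{pr} and the bounds $\underline{\sigma}^2 t\le\langle B\rangle_t\le\overline{\sigma}^2 t$ quasi-surely. This same estimate is what forces the stochastic-integral error to vanish, and is therefore the crux of the whole argument.
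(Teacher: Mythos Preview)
Your proposal is correct and follows essentially the same route as the paper: the paper also introduces exactly your function $\varphi_\varepsilon$, mollifies it by convolution with a standard bump $\eta_n$ to apply Theorem~\ref{th1}, passes to the limit $n\to\infty$ in the $d\langle B\rangle$-term via Proposition~\ref{pr} (using that $\varphi_n''=\varphi_\varepsilon''$ outside small neighborhoods of $a\pm\varepsilon$), and then lets $\varepsilon\downarrow 0$ with the same occupation estimate controlling the stochastic-integral error. Your identification of the mollification step and Proposition~\ref{pr} as the crux is exactly right; the paper carries out that step explicitly but adds nothing you have not anticipated, and your monotonicity argument for $L^a$ via Proposition~\ref{pr1} is a clean addition the paper leaves implicit.
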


\begin{proof}
Without loss of generality, we assume that $a=0$. Let us define
$\eta\in C^{\infty}(\mathbb{R})$ by putting
\[
\eta(x)=\left\{
\begin{array}{lll}
C\exp\Big(\dfrac{1}{|x|^{2}-1}\Big); & if\ |x|<1,\\
0 ; &  if\ |x|\geq1,
\end{array}
\right.
\]
where $C$ is the  positive constant satisfying
$\int_{\mathbb{R}}\eta(x)dx=1$.

 For every $n\in \mathbb{N}$, we put
$$\eta_{n}(x):=n\eta(nx), x\in \mathbb{R}.$$ Then $\eta_{n}\in
C^{\infty}(\mathbb{R})$ and $\int_{\mathbb{R}}\eta_{n}(x)dx=1.$

 For any $\varepsilon>0$, we set
\[
\varphi_{\varepsilon}(x)=\left\{
\begin{array}{lll}
\dfrac{1}{2}\big(\varepsilon+\dfrac{x^{2}}{\varepsilon}); & if\ |x|<\varepsilon,\\
|x| ; &  if\ |x|\geq \varepsilon.
\end{array}
\right.
\]
Then, obviously
\[
\varphi'_{\varepsilon}(x)=\left\{
\begin{array}{lll}
\dfrac{x}{\varepsilon}; & if\ |x|\leq\varepsilon,\\
1 ; &  if\ x> \varepsilon, \\
 -1 ; &  if\ x< -\varepsilon,
\end{array}
\right.
\]
and
\[
\varphi''_{\varepsilon}(x)=\left\{
\begin{array}{lll}
\dfrac{1}{\varepsilon}; & if\ |x|<\varepsilon,\\
0 ; &  if\ |x|> \varepsilon
\end{array}
\right.
\]
($\varphi''$ is not defined  at $-\varepsilon$ and $\varepsilon$).

Let us still introduce
$\varphi_{n}:=\varphi_{\varepsilon}*\eta_{n}$, i.e.,
$$\varphi_{n}(x)=\int_{\mathbb{R}}\eta_{n}(x-y)\varphi_{\varepsilon}(y)dy, x\in\mathbb{R}.$$
Then $\varphi_{n}(x)\in C^{\infty}(\mathbb{R})$, $0\leq
\varphi''_{n}\leq \dfrac{1}{\varepsilon}$, $\varphi_{n}\rightarrow
\varphi_{\varepsilon}, \varphi'_{n}\rightarrow
\varphi'_{\varepsilon}$ uniformly in $\mathbb{R}$, and
$\varphi''_{n}\rightarrow \varphi''_{\varepsilon}$ pointwise (except
at $\varepsilon$ and $ - \varepsilon$), as $n\rightarrow \infty$.

By applying the generalized It\^o formula for the $G$-Brownian
motion (see Theorem \ref{th1}) to $\varphi_{n}(B_{t})$, we deduce
that
\begin{eqnarray}\label{equation}
\varphi_{n}(B_{t})=\varphi_{n}(0)+\int_{0}^{t}\varphi_{n}'(B_{s}%
)dB_{s}+\frac{1}{2}\int_{0}^{t}\varphi_{n}''(B_{s})d\left \langle
B\right \rangle _{s}.
\end{eqnarray}
From $\varphi_{n}\rightarrow \varphi_{\varepsilon},
\varphi'_{n}\rightarrow \varphi'_{\varepsilon}$, uniformly in
$\mathbb{R}$, it follows that $$\varphi_{n}(B_{t})\rightarrow
\varphi_{\varepsilon}(B_{t}),\ \varphi_{n}(0)\rightarrow
\varphi_{\varepsilon}(0), $$ and
 $$\int_{0}^{t}\varphi_{n}'(B_{s}
)dB_{s}\rightarrow \int_{0}^{t}\varphi_{\varepsilon}'(B_{s}%
)dB_{s}$$ in $\mathbb{L}^{2}$, as $n\rightarrow \infty$.

Setting $$A_{n,\varepsilon}:=(-\varepsilon-\frac{1}{n},
-\varepsilon+\frac{1}{n})\bigcup (\varepsilon-\frac{1}{n},
\varepsilon+\frac{1}{n}),$$ we observe that $\varphi''_{n}=
\varphi''_{\varepsilon}$ on $A_{n,\varepsilon}^{c}$.

From Corollary \ref{co1} we know that
\begin{eqnarray*}
\int_{0}^{t}1_{\{-\varepsilon, \varepsilon \}}(B_{s})d\langle B
\rangle_{ s}=\int_{0}^{t}1_{\{-\varepsilon\}}(B_{s})d\langle B
\rangle_{ s}+\int_{0}^{t}1_{\{ \varepsilon \}}(B_{s})d\langle B
\rangle_{ s}=0,\ q.s.
\end{eqnarray*}
Let us put $\varphi''_{\varepsilon}(x)=0, x=\pm \varepsilon.$

Since $0\leq \varphi''_{n}\leq \dfrac{1}{\varepsilon}$, we have
$|\varphi''_{n}-\varphi''_{\varepsilon}|\leq\dfrac{2}{\varepsilon}$
on $A_{n,\varepsilon}$. Therefore, we have
\begin{eqnarray*}
&&\mathbb{\hat{E}}[|\int_{0}^{t}\varphi_{n}''(B_{s})d\langle B \rangle_{s}- \int_{0}^{t}\varphi_{\varepsilon}''(B_{s}%
)d\langle B \rangle_{s}|]\\
&\leq&\mathbb{\hat{E}}[\int_{0}^{t}|\varphi_{n}''(B_{s})-
\varphi_{\varepsilon}''(B_{s})|d\langle B \rangle_{s}]\\
&\leq&\dfrac{2}{\varepsilon}\mathbb{\hat{E}}[\int_{0}^{t}1_{A_{n,\varepsilon}}(B_{s})d\langle
B \rangle_{s}]\\
&\leq&\dfrac{2}{\varepsilon}\mathbb{\hat{E}}[\int_{0}^{t}1_{(-\varepsilon-\frac{1}{n},
-\varepsilon+\frac{1}{n})}(B_{s})d\langle
B\rangle_{s}]+\dfrac{2}{\varepsilon}\mathbb{\hat{E}}[\int_{0}^{t}1_{(\varepsilon-\frac{1}{n},
\varepsilon+\frac{1}{n})}(B_{s})d\langle B \rangle_{s}].
\end{eqnarray*}
From Proposition \ref{pr} we conclude that
\begin{eqnarray*}
&&\mathbb{\hat{E}}[|\int_{0}^{t}\varphi_{n}''(B_{s})d\langle B
\rangle_{s}- \int_{0}^{t}\varphi_{\varepsilon}''(B_{s})d\langle B
\rangle_{s}|]\leq \frac{C}{\varepsilon n}\rightarrow 0,\ \text{as}\
n\rightarrow \infty.
\end{eqnarray*}
Therefore, letting $n\rightarrow\infty$ in (\ref{equation}), we
conclude that
\begin{eqnarray}\label{equation1}
\varphi_{\varepsilon}(B_{t})=\dfrac{1}{2}\varepsilon+\int_{0}^{t}\varphi_{\varepsilon}'(B_{s}%
)dB_{s}+\frac{1}{2\varepsilon}\int_{0}^{t}1_{ (-\varepsilon,
\varepsilon)}(B_{s})d\langle B \rangle_{s}.
\end{eqnarray}
From the definition of $\varphi_{\varepsilon}$ it follows that
\begin{eqnarray*}
&&\mathbb{\hat{E}}[(\varphi_{\varepsilon}(B_{t})- |B_{t}|)^{2}]\\
&\leq& \mathbb{\hat{E}}[(\varphi_{\varepsilon}(B_{t})-
|B_{t}|)^{2}1_{|B_{t}|\geq\varepsilon}]
+\mathbb{\hat{E}}[(\varphi_{\varepsilon}(B_{t})- |B_{t}|)^{2}1_{|B_{t}|<\varepsilon}]\\
&=&\mathbb{\hat{E}}[(\varphi_{\varepsilon}(B_{t})-
|B_{t}|)^{2}1_{|B_{t}|<\varepsilon}]\leq \varepsilon^{2}\rightarrow
0, \ \text{as}\  \varepsilon\rightarrow 0.
\end{eqnarray*}
By virtue of Lemma \ref{le1}, we know that
\begin{eqnarray*}
&&\mathbb{\hat{E}}[|\int_{0}^{t}\varphi_{\varepsilon}'(B_{s}%
)dB_{s}-\int_{0}^{t}sgn(B_{s})dB_{s}|^{2}]\\
&\leq& C\mathbb{\hat{E}}[\int_{0}^{t}(\varphi_{\varepsilon}'(B_{s}
)-sgn(B_{s}))^{2}d\langle B \rangle_{s}]\\
&\leq&C\mathbb{\hat{E}}[\int_{0}^{t}1_{ (-\varepsilon,
\varepsilon)}(B_{s})d\langle B \rangle_{s}],
\end{eqnarray*}
and Proposition \ref{pr} allows to conclude that
\begin{eqnarray*}
\mathbb{\hat{E}}[|\int_{0}^{t}\varphi_{\varepsilon}'(B_{s}%
)dB_{s}-\int_{0}^{t}sgn(B_{s})dB_{s}|^{2}] \rightarrow 0, \ as \
\varepsilon\rightarrow 0.
\end{eqnarray*}
Finally, from equation (\ref{equation1}) it follows that
\begin{eqnarray*}
|B_{t}|=\int_{0}^{t}sgn(B_{s})dB_{s}+L^{0}_{t}.
\end{eqnarray*}
The proof is complete.
\end{proof}\vskip2mm

\noindent {\bf Proof of Lemma \ref{le20}:}
 Now we prove that $sgn(B_{\cdot})\in\widetilde{M}_{\ast}^{2}(0,T)$.
 Let $\pi=\{0=t_{0}<t_{1}\cdots<t_{n}=T\}$, $n\geq1,$ be a
   partition of $[0,T]$ and $B_{t}^{n}=\sum\limits_{j=0}^{n-1}B_{t_{j}}
1_{[t_{j},t_{j+1})}(t)$. Then
\begin{eqnarray*}
&& \mathbb{\hat{E}}[\int_{0}^{t}(\varphi_{n}'(B_{s})-\varphi_{n}'(B^{n}_{s}))^{2}d\langle B \rangle_{s}]\\
&\leq&\dfrac{1}{\varepsilon^{2}}\mathbb{\hat{E}}[\int_{0}^{t}(B_{s}-B^{n}_{s})^{2}d\langle
B \rangle_{s}] \rightarrow 0, \ as \ n\rightarrow \infty.
\end{eqnarray*}
Since $ \varphi'_{n}\rightarrow \varphi'_{\varepsilon}$, uniformly
in $\mathbb{R}$, we have
\begin{eqnarray*}
\mathbb{\hat{E}}[\int_{0}^{t}(\varphi_{\varepsilon}'(B_{s}
)-\varphi_{n}'(B_{s}))^{2}d\langle B \rangle_{s}] \rightarrow 0, \
as \ n\rightarrow \infty.
\end{eqnarray*}
 By virtue of Proposition \ref{pr} we know that
\begin{eqnarray*}
&&\mathbb{\hat{E}}[\int_{0}^{t}(\varphi_{\varepsilon}'(B_{s}
)-sgn(B_{s}))^{2}d\langle B \rangle_{s}]\\
&\leq&C\mathbb{\hat{E}}[\int_{0}^{t}1_{ (-\varepsilon,
\varepsilon)}(B_{s})d\langle B \rangle_{s}]\rightarrow 0, \ as \
\varepsilon\rightarrow 0.
\end{eqnarray*}
Consequently, from the above estimates
\begin{eqnarray*}
 \lim\limits_{n\rightarrow\infty}\mathbb{\hat{E}}
 [\int_{0}^{t}(sgn(B_{s})-\varphi_{n}'(B^{n}_{s}))^{2}d\langle B \rangle_{s}]
 \rightarrow 0, \ as \ \varepsilon\rightarrow 0.
\end{eqnarray*}
\begin{remark}
Similar to the proof of the above theorem, we can obtain
$1_{\{B_{.}>a\}}\in \widetilde{M}_{\ast}^{2}(0,T)$ and
$1_{\{B_{.}\leq a\}} \in \widetilde{M}_{\ast}^{2}(0,T)$.
\end{remark}

\begin{remark}
In analogy to Theorem \ref{th3}, we  obtain two other forms of
Tanaka formula:
\begin{eqnarray*}
(B_{t}-a)^{+}=(-a)^{+}+\int_{0}^{t}1_{\{B_{s}>a\}}dB_{s}+\frac{1}{2}L^{a}_{t},
\end{eqnarray*}
and
\begin{eqnarray*}
(B_{t}-a)^{-}=(-a)^{-}+\int_{0}^{t}1_{\{B_{s}\leq
a\}}dB_{s}+\frac{1}{2}L^{a}_{t}.
\end{eqnarray*}
\end{remark}

 Denis, Hu and Peng  \cite{DHP} obtained an extension of
 Kolmogorov continuity criterion to the framework of nonlinear expectation
 spaces. It will be needed for the study of joint continuity of
 local time for the $G$-Brownian motion $L^{a}_{t}$ in $(t,a)$.

 \begin{lemma} \label{Kolmogorov}
Let $d\geq 1$, $p>0$ and $(X_{t})_{t\in [0,T]^{d}}\subseteq
\mathbb{L}^{p}$ be such that there exist  positive constants $C$ and
$\varepsilon>0$ such that
$$\mathbb{\hat{E}}[|X_{t}-X_{s}|^{p}]\leq C|t-s|^{d+\varepsilon}, \ \text{for all} \ t,s \in[0,T].$$
Then the field $(X_{t})_{t\in [0,T]^{d}}$ admits a continuous
modification $(\text{\~{X}})_{t\in [0,T]^{d}}$  $(i.e.\
\text{\~{X}}_{t}=X_{t}, \ q.s.$,  for all $ t \in[0,T])$  such that
$$\mathbb{\hat{E}}\Big[(\sup\limits_{t\neq s}\frac{|\text{\~{X}}_{t}-\text{\~{X}}_{s}|}
{|t-s|^{\alpha}})^{p}\Big]<+\infty,$$ for every $\alpha\in[0,
\varepsilon/p[$. As a consequence, paths of $\text{\~{X}}$ are
quasi-surely H\"{o}lder continuous of order $\alpha$, for every
$\alpha<\varepsilon/p$.
\end{lemma}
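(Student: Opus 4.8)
The plan is to adapt the classical dyadic chaining proof of the Kolmogorov--Chentsov theorem, but to carry out every estimate directly with the sublinear expectation $\hat{\mathbb{E}}$. The reason one cannot simply invoke the classical theorem under each $P\in\mathcal{P}$ separately is that these measures are mutually singular, so the $P$-dependent continuous modifications could disagree on a set that is not $\hat{c}$-negligible. The way around this is to notice that both the relevant ``H\"older seminorm'' random variable and the candidate modification can be built \emph{pathwise}, hence measure-independently; the only role of the sublinear structure is then to produce a single finite bound $\hat{\mathbb{E}}[\cdot]=\sup_{P}E_{P}[\cdot]$, and for this the sub-additivity and monotonicity of $\hat{\mathbb{E}}$ are exactly what the chaining estimate requires.

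First I would fix $\alpha\in[0,\varepsilon/p)$, introduce the dyadic grid $D_{n}:=\{Tk2^{-n}:k\in\{0,\dots,2^{n}\}^{d}\}$ and $D:=\bigcup_{n\ge1}D_{n}$, and define the maximal neighbouring increment at scale $n$,
\[
K_{n}:=\max\{|X_{t}-X_{s}|:s,t\in D_{n}\text{ adjacent}\},
\]
where ``adjacent'' means $|t-s|\le T\sqrt{d}\,2^{-n}$. Since there are at most $C_{d}2^{nd}$ such pairs, the hypothesis together with the finite sub-additivity and monotonicity of $\hat{\mathbb{E}}$ (from the definition of a sublinear expectation) gives
\[
\hat{\mathbb{E}}[K_{n}^{p}]\le\sum_{\text{pairs}}\hat{\mathbb{E}}[|X_{t}-X_{s}|^{p}]\le C_{d}2^{nd}\cdot C\,(T\sqrt{d}\,2^{-n})^{d+\varepsilon}=C'2^{-n\varepsilon}.
\]
Here the factor $2^{nd}$ from the number of pairs is compensated precisely by the exponent $d+\varepsilon$, which is why that exponent is assumed.

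Next comes the chaining step, which is purely pathwise and hence identical to the classical one: for fixed $\omega$ and any $s,t\in D$ with $2^{-(m+1)}\le|t-s|/T<2^{-m}$, writing $X_{t}-X_{s}$ as a telescoping sum along dyadic refinements yields $|X_{t}(\omega)-X_{s}(\omega)|\le c_{d}\sum_{n>m}K_{n}(\omega)$, so that
\[
M:=\sup_{\substack{s,t\in D\\ s\neq t}}\frac{|X_{t}-X_{s}|}{|t-s|^{\alpha}}\le c\sum_{n\ge1}2^{n\alpha}K_{n},\qquad\text{q.s.}
\]
Taking $\hat{\mathbb{E}}[M^{p}]$ now reduces to the geometric bound above. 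For $p\ge1$ I would use that $\|\cdot\|_{p}$ is a norm (Proposition \ref{pr9}) and the triangle inequality (passing to the infinite sum by monotone convergence under each $P$, via $\hat{\mathbb{E}}=\sup_{P}E_{P}$), obtaining $\|M\|_{p}\le c\sum_{n}2^{n\alpha}\|K_{n}\|_{p}\le c\sum_{n}2^{-n(\varepsilon/p-\alpha)}<\infty$; for $0<p<1$ the elementary inequality $(\sum x_{n})^{p}\le\sum x_{n}^{p}$ together with countable sub-additivity of $\hat{\mathbb{E}}$ (again from the representation and monotone convergence under each $P$) gives $\hat{\mathbb{E}}[M^{p}]\le c^{p}\sum_{n}2^{np\alpha}\hat{\mathbb{E}}[K_{n}^{p}]<\infty$. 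In both cases the series converges because $\alpha<\varepsilon/p$, so $\hat{\mathbb{E}}[M^{p}]<\infty$ and in particular $M<\infty$ q.s.

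Finally, finiteness of $M$ q.s. says that $t\mapsto X_{t}$ is uniformly (indeed H\"older) continuous on the dense set $D$ outside a polar set, so it extends pathwise to a continuous field $(\tilde{X}_{t})_{t\in[0,T]^{d}}$. To see $\tilde{X}$ is a modification I would fix $t$ and choose dyadics $s_{n}\to t$: the hypothesis gives $X_{s_{n}}\to X_{t}$ in $\mathbb{L}^{p}$, while $X_{s_{n}}=\tilde{X}_{s_{n}}\to\tilde{X}_{t}$ q.s. by continuity; Proposition \ref{pr1} then supplies a subsequence along which $X_{s_{n_{k}}}\to X_{t}$ q.s., forcing $\tilde{X}_{t}=X_{t}$ q.s. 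Since $\tilde{X}$ is continuous and agrees with $X$ on the dense set $D$, the supremum defining its H\"older seminorm over all $s\neq t$ coincides with $M$, which yields $\hat{\mathbb{E}}[(\sup_{t\neq s}|\tilde{X}_{t}-\tilde{X}_{s}|/|t-s|^{\alpha})^{p}]<\infty$ and the asserted q.s. H\"older continuity. The only genuinely new ingredient relative to the classical argument is the systematic use of sub-additivity in place of additivity; the point to get right, and the mild obstacle, is to confirm that this still produces the uniform geometric bound $\hat{\mathbb{E}}[K_{n}^{p}]\le C'2^{-n\varepsilon}$ and that the modification is chosen in a measure-independent (pathwise) way, so that mutual singularity of the $P\in\mathcal{P}$ never enters.
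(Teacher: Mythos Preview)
The paper does not actually prove this lemma: it is quoted verbatim from Denis--Hu--Peng \cite{DHP} and used as a black box. So there is no ``paper's own proof'' to compare against.

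That said, your proposal is correct and is precisely the argument given in \cite{DHP}: the classical dyadic chaining of Kolmogorov--Chentsov, with additivity of the expectation replaced throughout by the finite (and, where needed, countable) sub-additivity of $\hat{\mathbb{E}}$, and with the modification constructed pathwise so that it does not depend on any particular $P\in\mathcal{P}$. Your handling of the two regimes $p\ge1$ (triangle inequality for $\|\cdot\|_{p}$) and $0<p<1$ (the inequality $(\sum x_{n})^{p}\le\sum x_{n}^{p}$) is the right split, and your use of Proposition~\ref{pr1} to identify $\tilde{X}_{t}$ with $X_{t}$ q.s.\ at each fixed $t$ is exactly how the argument closes. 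Nothing is missing.
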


We will show that the local time for the $G$-Brownian motion has a
jointly continuous modification. For the classical case, we refer to
\cite{CW}, \cite{KS} and \cite{RY1999}.  For the proof we use an
approximation method here, which is different from the classical
case.
\begin{theorem}\label{th2}
 For all $t\in[0,T]$,  there exists a jointly
continuous modification of $(a, t)\mapsto L_t^{a}$. Moreover, $(a,
t)\mapsto L_t^{a}$ is H\"{o}lder continuous of order $\gamma$ for
all $\gamma<\frac{1}{2}$.
\end{theorem}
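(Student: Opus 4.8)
The plan is to verify the hypotheses of the extended Kolmogorov continuity criterion (Lemma \ref{Kolmogorov}) for the two-parameter random field $(a,t)\mapsto L^a_t$. Fixing a compact box $(a,t)\in[-N,N]\times[0,T]$ so that the parameter dimension is $d=2$, it suffices to produce, for each integer $k\ge 1$, a constant $C_k$ with
$$\hat{\mathbb{E}}\big[|L^a_t - L^b_s|^{2k}\big]\le C_k\,(|a-b|+|t-s|)^{k},\qquad a,b\in[-N,N],\ s,t\in[0,T].$$
Applying Lemma \ref{Kolmogorov} with $p=2k$, $d=2$ and $\varepsilon=k-2$ then gives a jointly continuous modification that is H\"older continuous of every order $\gamma<\varepsilon/p=\tfrac12-\tfrac1k$, and letting $k\to\infty$ yields H\"older continuity of every order $\gamma<\tfrac12$. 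Since $|a-b|^k+|t-s|^k\le 2\,(|a-b|+|t-s|)^{k}$ and (by the triangle inequality through the intermediate point $L^a_s$) the increment splits, it is enough to control the temporal increment $\hat{\mathbb{E}}[|L^a_t-L^a_s|^{2k}]$ and the spatial increment $\hat{\mathbb{E}}[|L^a_t-L^b_t|^{2k}]$ separately.

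The temporal increment is the easy one. By Tanaka's formula (Theorem \ref{th3}), $L^a_t-L^a_s=(|B_t-a|-|B_s-a|)-\int_s^t\mathrm{sgn}(B_u-a)\,dB_u$; the first difference is bounded by $|B_t-B_s|$, whose $2k$-th $G$-moment is $\le C_k|t-s|^k$ because $B_t-B_s$ is $G$-normal with variance parameter at most $\bar\sigma^2(t-s)$, while the stochastic integral is controlled by the Burkh\"older--Davis--Gundy inequality (Lemma \ref{le1}) together with $\langle B\rangle_t-\langle B\rangle_s\le\bar\sigma^2(t-s)$, giving $\le c_k(\bar\sigma^2(t-s))^k$. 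Hence $\hat{\mathbb{E}}[|L^a_t-L^a_s|^{2k}]\le C_k|t-s|^k$. For the spatial increment (say $a<b$) I would use the two one-sided Tanaka formulas of the Remark following Theorem \ref{th3} to write
$$\tfrac12\big(L^b_t-L^a_t\big)=\big[(B_t-b)^- -(B_t-a)^-\big]-\big[(-b)^- -(-a)^-\big]-\int_0^t 1_{\{a<B_s\le b\}}\,dB_s,$$
the integrand lying in $\widetilde M^2_\ast(0,T)$ by that Remark. The first two brackets are bounded by $|a-b|$ since $x\mapsto x^-$ is $1$-Lipschitz, while Lemma \ref{le1} bounds the $2k$-th moment of the stochastic integral by $c_k\,\hat{\mathbb{E}}\big[(\int_0^t 1_{\{a<B_s\le b\}}\,d\langle B\rangle_s)^k\big]$.

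Everything therefore reduces to the higher-moment occupation-time estimate
$$\hat{\mathbb{E}}\Big[\big(\int_0^t 1_{[a,a+\delta]}(B_s)\,d\langle B\rangle_s\big)^k\Big]\le C_k\,\delta^{k},$$
with $C_k$ independent of $a$, and I expect this to be the main obstacle. Proposition \ref{pr} supplies exactly this for $k=1$ with a constant uniform in $a$, and that uniformity is precisely what makes an iteration work: writing $A_t=\int_0^t 1_{[a,a+\delta]}(B_s)\,d\langle B\rangle_s$, the identity $A_t^k=k\int_0^t(A_t-A_s)^{k-1}\,dA_s$ combined with a conditional, restarted version of Proposition \ref{pr} — namely $\hat{\mathbb{E}}[A_t-A_s\mid\mathcal F_s]\le C\delta$ with $C$ independent of the starting level, coming from the stationarity and independence of the increments of $B$ — yields by induction the Khasminskii-type bound $\hat{\mathbb{E}}[A_t^k]\le k!\,(C\delta)^{k}$. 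The delicate point, absent in the classical theory, is to justify these conditioning and restart steps inside the sublinear framework, where $\hat{\mathbb{E}}$ is an upper expectation over a family of mutually singular measures and neither dominated convergence nor ordinary conditioning is directly available; this has to be handled either via the conditional $G$-expectation or uniformly over $P\in\mathcal P$ with constants independent of $P$. Once this estimate is secured, inserting it into the spatial bound gives $\hat{\mathbb{E}}[|L^a_t-L^b_t|^{2k}]\le C_k|a-b|^k$ (for $|a-b|\le1$), and combining with the temporal bound delivers the displayed two-parameter inequality; Lemma \ref{Kolmogorov} together with $k\to\infty$ then produces the jointly continuous modification and H\"older regularity of every order $\gamma<\tfrac12$.
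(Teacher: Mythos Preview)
Your overall strategy --- split the increment, reduce via Tanaka and BDG to a higher-moment occupation-time estimate
\[
\hat{\mathbb{E}}\Big[\Big(\int_0^t 1_{[a,a+\delta]}(B_s)\,d\langle B\rangle_s\Big)^{p}\Big]\le C_p\,\delta^{p},
\]
and then invoke Lemma~\ref{Kolmogorov} --- is exactly what the paper does. The one substantive difference is in how you obtain this key estimate, and there you are making life much harder than necessary.

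Your Khasminskii-type iteration requires a conditional/restarted version of Proposition~\ref{pr} under the sublinear expectation, which you yourself flag as ``the delicate point'': one must justify $\hat{\mathbb{E}}[A_t-A_s\mid\mathcal{F}_s]\le C\delta$ uniformly over $P\in\mathcal{P}$, and this is not proved in the paper. By contrast, the paper never conditions or iterates. It simply revisits the proof of Proposition~\ref{pr} and observes that the $C^2$ function $\varphi$ built there (with $|\varphi'|\le 4\delta^{-1}$ and $\varphi''\ge \delta^{-2}1_{[a,a+\delta]}$) already gives, via It\^o's formula, the \emph{pathwise} (q.s.) bound
\[
\int_0^t 1_{[a,a+\delta]}(B_s)\,d\langle B\rangle_s
\;\le\; 8\delta\,|B_t| \;+\; 2\delta^{2}\Big|\int_0^t \varphi'(B_s)\,dB_s\Big|.
\]
Raising to the $p$-th power and applying BDG (Lemma~\ref{le1}) to the stochastic integral, using $|\varphi'|\le 4\delta^{-1}$, yields $C_p\delta^{p}$ immediately --- no conditioning, no induction, and no issues with the lack of dominated convergence in the $G$-framework. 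This closes exactly the gap you were worried about, and is the approach you should adopt.
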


\begin{proof}
From Lemma \ref{le1} we know that, for all  $s,t\geq 0$ and $a,b\in
\mathbb{R}$,
\begin{eqnarray}\label{4}
&&\mathbb{\hat{E}}[(|B_{t}-a|-|B_{s}-b|)^{2p}]\nonumber\\
&\leq&
C_{p}\mathbb{\hat{E}}[|B_{t}-B_{s}|^{2p}]+C_{p}|a-b|^{2p}\nonumber\\
&\leq& C_{p}|t-s|^{p}+C_{p}|a-b|^{2p}
\end{eqnarray}
The constant $C_{p}$ only depends on $p$. By choosing $p>2$, we see
from the generalized Kolmogorov continuity criterion (see Lemma
\ref{Kolmogorov}) that $|B_{t}-a|$ has a jointly continuous
modification in $(a, t)$.

Let us now prove that also the integral
$\int_{0}^{t}sgn(B_{s}-a)dB_{s}$ has a jointly continuous
modification. For this end we let $\delta>0$ and $p>0$. Then

\begin{eqnarray*}\label{}
&&\mathbb{\hat{E}}[|\int_{0}^{t}sgn(B_{s}-a)dB_{s}-\int_{0}^{t}sgn(B_{s}-a-\delta)dB_{s}|^{2p}]\\
&&\leq2^{2p}\mathbb{\hat{E}}[|\int_{0}^{t}1_{[a,
a+\delta]}(B_{s})dB_{s}|^{2p}],
\end{eqnarray*}
and from Lemma \ref{le1} it follows that
\begin{eqnarray}\label{eq1}
&&\mathbb{\hat{E}}[|\int_{0}^{t}sgn(B_{s}-a)dB_{s}-\int_{0}^{t}sgn(B_{s}-a-\delta)dB_{s}|^{2p}]\nonumber\\
&&\leq C_{p}\mathbb{\hat{E}}[|\int_{0}^{t}1_{[a,
a+\delta]}(B_{s})d\langle B \rangle_{ s}|^{p}].
\end{eqnarray}

 We denote $\varphi$ the same as in the proof of Proposition
 \ref{pr}. By applying the generalized It\^o formula for the $G$-Brownian motion
 (see Theorem \ref{th1}) to $\varphi(B_{t})$, we deduce that
\begin{eqnarray*}
\varphi(B_{t})=\int_{0}^{t}\varphi'(B_{s}%
)dB_{s}+\frac{1}{2}\int_{0}^{t}\varphi''(B_{s})d\left \langle
B\right \rangle _{s}.
\end{eqnarray*}
Therefore,
\begin{eqnarray}\label{eq2}
\int_{0}^{t}1_{[a,a+\delta]}(B_{s})d\langle B \rangle_{ s}
&\leq& \delta^{2}\int_{0}^{t}\varphi''(B_{s})d\langle B \rangle_{ s}\nonumber\\
&=&2\delta^{2}\varphi(B_{t})-2\delta^{2}\int_{0}^{t}\varphi'(B_{s}
)dB_{s}\nonumber\\
&\leq& 8\delta|B_{t}|+2\delta^{2}|\int_{0}^{t}\varphi'(B_{s}
)dB_{s}|.
\end{eqnarray}
Hereafter, $C_{p}$ may be different from line to line, but only
depends on $p$. From the inequalities (\ref{eq1}), (\ref{eq2}) and
Lemma \ref{le1} it follows that
\begin{eqnarray}\label{eq3}
&&\mathbb{\hat{E}}[|\int_{0}^{t}sgn(B_{s}-a)dB_{s}-\int_{0}^{t}sgn(B_{s}-a-\delta)dB_{s}|^{2p}]\nonumber\\
&\leq&
C_{p}\mathbb{\hat{E}}[(8\delta|B_{t}|+2\delta^{2}|\int_{0}^{t}\varphi'(B_{s}
)dB_{s}|)^{p}]\nonumber\\
&\leq&
C_{p}\delta^{p}\mathbb{\hat{E}}[|B_{t}|^{p}]+C_{p}\delta^{2p}\mathbb{\hat{E}}[|\int_{0}^{t}\varphi'(B_{s}
)dB_{s}|^{p}]\nonumber\\
&\leq&
C_{p}\delta^{p}\mathbb{\hat{E}}[|B_{t}|^{p}]+C_{p}\delta^{2p}\mathbb{\hat{E}}[(\int_{0}^{t}(\varphi'(B_{s}
))^{2}d\left \langle
B\right \rangle _{s})^{\frac{p}{2}}]\nonumber\\
&\leq&
C_{p}\delta^{p}\mathbb{\hat{E}}[|B_{t}|^{p}]+C_{p}\delta^{p}\mathbb{\hat{E}}[\left
\langle
B\right \rangle _{t}^{\frac{p}{2}}]\nonumber\\
&\leq&C_{p}\delta^{p}\mathbb{\hat{E}}[|B_{T}|^{p}]+C_{p}\delta^{p}\mathbb{\hat{E}}[\left
\langle B\right \rangle _{T}^{\frac{p}{2}}]\nonumber\\
&=&C_{p}\delta^{p}.
\end{eqnarray}
Therefore, for all $0\leq r\leq t$,
\begin{eqnarray}\label{eq3}
&&\mathbb{\hat{E}}[|\int_{0}^{t}sgn(B_{s}-a)dB_{s}-\int_{0}^{r}sgn(B_{s}-a-\delta)dB_{s}|^{2p}]\nonumber\\
&\leq& C_{p}\mathbb{\hat{E}}[|\int_{0}^{r}sgn(B_{s}-a)dB_{s}-\int_{0}^{r}sgn(B_{s}-a-\delta)dB_{s}|^{2p}]\nonumber\\
&&+C_{p}\mathbb{\hat{E}}[|\int_{r}^{t}sgn(B_{s}-a)dB_{s}|^{2p}]\nonumber\\
&\leq&
C_{p}\delta^{p}+C_{p}\mathbb{\hat{E}}[(\int_{r}^{t}(sgn(B_{s}-a))^{2}d\langle B\rangle_{s})^{p}]\nonumber\\
&\leq& C_{p}\delta^{p}+C_{p}|t-r|^{p}.
\end{eqnarray}
 We choose $p>2$, then by the generalized Kolmogorov
continuity criterion (Lemma \ref{Kolmogorov}), we obtain the
existence of a jointly continuous modification of $(\alpha,
t)\mapsto L_t^{\alpha}$. The proof is complete.
\end{proof}

\begin{corollary}\label{co2}
 For all $t\in [0,T], a\in\mathbb{R}$, we have
 $$\int_{0}^{t}sgn(B_{s}-a)dB_{s},
 \int_{0}^{t}1_{[a,\infty)}(B_{s})dB_{s},
 \int_{0}^{t}1_{(-\infty,a)}(B_{s})dB_{s}$$ have a jointly
continuous modification.
\end{corollary}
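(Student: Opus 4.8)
The plan is to reduce all three statements to the joint continuity of the single process $\int_{0}^{t}sgn(B_{s}-a)\,dB_{s}$, which has in fact already been established inside the proof of Theorem \ref{th2}: the estimate $\mathbb{\hat{E}}[|\int_{0}^{t}sgn(B_{s}-a)\,dB_{s}-\int_{0}^{r}sgn(B_{s}-a-\delta)\,dB_{s}|^{2p}]\leq C_{p}\delta^{p}+C_{p}|t-r|^{p}$, combined with the generalized Kolmogorov criterion (Lemma \ref{Kolmogorov}) for the choice $p>2$, yields a jointly continuous modification of $(a,t)\mapsto\int_{0}^{t}sgn(B_{s}-a)\,dB_{s}$. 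This settles the first of the three integrals directly.

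For the remaining two I would exploit the pointwise identities $1_{[a,\infty)}(x)=\tfrac{1}{2}\big(1+sgn(x-a)\big)$ and $1_{(-\infty,a)}(x)=\tfrac{1}{2}\big(1-sgn(x-a)\big)$, which hold for every $x\neq a$. Upon substituting $x=B_{s}$, the two sides differ only on the set $\{B_{s}=a\}$. By Corollary \ref{co1} we have $\int_{0}^{t}1_{\{a\}}(B_{s})\,d\langle B\rangle_{s}=0$ q.s., so the integrand $1_{[a,\infty)}(B_{\cdot})-\tfrac{1}{2}(1+sgn(B_{\cdot}-a))$ has vanishing $\widetilde{M}_{\ast}^{2}(0,T)$-norm; by the isometry of Lemma \ref{le4} the corresponding It\^o integrals coincide quasi-surely. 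Consequently
$$\int_{0}^{t}1_{[a,\infty)}(B_{s})\,dB_{s}=\tfrac{1}{2}B_{t}+\tfrac{1}{2}\int_{0}^{t}sgn(B_{s}-a)\,dB_{s},\qquad \int_{0}^{t}1_{(-\infty,a)}(B_{s})\,dB_{s}=\tfrac{1}{2}B_{t}-\tfrac{1}{2}\int_{0}^{t}sgn(B_{s}-a)\,dB_{s},$$
quasi-surely, where I have used $\int_{0}^{t}1\,dB_{s}=B_{t}$. Membership of these indicator processes in $\widetilde{M}_{\ast}^{2}(0,T)$, needed for the integrals to be defined, is guaranteed by the Remark following the proof of Lemma \ref{le20}.

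It then remains to assemble the pieces. The process $(a,t)\mapsto B_{t}$ is trivially jointly continuous, being constant in $a$ and continuous in $t$. Since a finite linear combination of processes admitting jointly continuous modifications again admits one, the right-hand sides above provide jointly continuous modifications of $\int_{0}^{t}1_{[a,\infty)}(B_{s})\,dB_{s}$ and $\int_{0}^{t}1_{(-\infty,a)}(B_{s})\,dB_{s}$. The step requiring the most care is the passage from the pointwise identities, which fail exactly on $\{B_{s}=a\}$, to the q.s. equality of the stochastic integrals; this is precisely where Corollary \ref{co1} and the $\widetilde{M}_{\ast}^{2}$-isometry of Lemma \ref{le4} are indispensable, since in the $G$-framework one cannot argue measure by measure against a dominating probability.
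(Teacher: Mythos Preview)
Your proof is correct. The paper states this corollary without an explicit proof, and your argument---extracting the joint continuity of $\int_{0}^{t}sgn(B_{s}-a)\,dB_{s}$ from the estimate already obtained inside the proof of Theorem~\ref{th2}, and then reducing the two indicator integrals to this one via the identities $1_{[a,\infty)}=\tfrac{1}{2}(1+sgn(\cdot-a))$, $1_{(-\infty,a)}=\tfrac{1}{2}(1-sgn(\cdot-a))$ together with Corollary~\ref{co1} and Lemma~\ref{le4}---is a clean and complete way to supply the details.
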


\section{Quadratic variation of local time for the $G$-Brownian motion}
The objective of this section is to  study the quadratic variation
of the local time for the $G$-Brownian motion. For this end, we
begin with the following Lemma:
\begin{lemma}\label{le2}
 If $f:\mathbb{R}\rightarrow[0,\infty)$ is a continuous function with compact
 support, then, for all $t\geq 0$, we have
 \begin{eqnarray*}\label{}
\int_{-\infty}^{\infty}f(a)\Big(\int_{0}^{t}sgn(B_{s}-a)dB_{s}\Big)da
=\int_{0}^{t}\Big(\int_{-\infty}^{\infty}f(a)sgn(B_{s}-a)da\Big)dB_{s},
q.s.,
\end{eqnarray*}
and
\begin{eqnarray*}\label{}
\int_{-\infty}^{\infty}f(a)\Big(\int_{0}^{t}1_{[a,\infty)}(B_{s})dB_{s}\Big)da
=\int_{0}^{t}\Big(\int_{-\infty}^{\infty}f(a)1_{[a,\infty)}(B_{s})da\Big)dB_{s},
q.s.
\end{eqnarray*}
\end{lemma}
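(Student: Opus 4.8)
The plan is to prove a stochastic Fubini theorem by approximating the outer (Lebesgue) integral over $a$ by Riemann sums and exploiting the linearity and continuity of the It\^o integral $I$ established in Lemma \ref{le4}. I treat only the first identity; the second is identical, using Corollary \ref{co2} and the Remark following Theorem \ref{th3} in place of the results for $sgn$. Since $f$ has compact support, fix $N$ with $\mathrm{supp}(f)\subseteq[-N,N]$ and let $\pi_m=\{-N=a_0<\cdots<a_m=N\}$ be partitions with mesh $|\pi_m|\to 0$. Set $g_m(x):=\sum_{i=0}^{m-1}f(a_i)sgn(x-a_i)(a_{i+1}-a_i)$ and $g(x):=\int_{-\infty}^{\infty}f(a)sgn(x-a)da$. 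By linearity of $I$,
$$S_m:=\sum_{i=0}^{m-1}f(a_i)\Big(\int_{0}^{t}sgn(B_s-a_i)dB_s\Big)(a_{i+1}-a_i)=\int_{0}^{t}g_m(B_s)dB_s,$$
so it suffices to identify the limit of $S_m$ in two different ways.

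First I would pass to the limit inside the integral. The key estimate, replacing the unavailable dominated convergence theorem, is a uniform one: isolating the single subinterval containing $x$ (where $sgn(x-\cdot)$ jumps) from the others and using the uniform continuity of $f$, one obtains
$$\sup_{x\in\mathbb{R}}|g_m(x)-g(x)|\leq 2N\,\omega_f(|\pi_m|)+2\|f\|_{\infty}|\pi_m|\longrightarrow 0,$$
where $\omega_f$ is the modulus of continuity of $f$. Since $\underline{\sigma}^{2}s\leq\langle B\rangle_s\leq\overline{\sigma}^{2}s$ q.s., this yields $\mathbb{\hat{E}}[\int_{0}^{t}(g_m(B_s)-g(B_s))^{2}d\langle B\rangle_s]\leq\overline{\sigma}^{2}t\,\sup_{x}|g_m(x)-g(x)|^{2}\to 0$, that is, $g_m(B_{\cdot})\to g(B_{\cdot})$ in $\widetilde{M}_{\ast}^{2}(0,T)$ (each $g_m(B_{\cdot})$ lies in this space as a finite linear combination of the processes $sgn(B_{\cdot}-a_i)\in\widetilde{M}_{\ast}^{2}(0,T)$ from Lemma \ref{le20}, and hence so does the limit). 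By the continuity of $I$ in Lemma \ref{le4}, $S_m=\int_{0}^{t}g_m(B_s)dB_s\to\int_{0}^{t}g(B_s)dB_s$ in $\mathbb{L}^{2}$, and by Proposition \ref{pr1} also quasi-surely along a subsequence.

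On the other hand, I would use the joint continuity provided by Corollary \ref{co2}: working with the jointly continuous modification of $(a,t)\mapsto X_t^{a}:=\int_{0}^{t}sgn(B_s-a)dB_s$ (which agrees with $X_t^{a_i}$ q.s.\ at the countably many partition points), the map $a\mapsto f(a)X_t^{a}(\omega)$ is, for q.s.\ $\omega$, continuous with compact support, so the ordinary Riemann sums converge, giving $S_m(\omega)\to\int_{-\infty}^{\infty}f(a)X_t^{a}(\omega)da$ q.s. Comparing the two limits of $S_m$ along the common subsequence and invoking uniqueness of quasi-sure limits yields the claimed identity. The second identity follows verbatim, the only changes being that one takes $g(x)=\int_{-\infty}^{\infty}f(a)1_{[a,\infty)}(x)da$ and draws the joint continuity of $\int_{0}^{t}1_{[a,\infty)}(B_s)dB_s$ from Corollary \ref{co2} together with the membership $1_{\{B_{\cdot}>a\}}\in\widetilde{M}_{\ast}^{2}(0,T)$ from the Remark after Theorem \ref{th3}. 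The main obstacle is precisely the absence of a dominated convergence theorem in the $G$-framework, which I circumvent through the uniform bound on $|g_m-g|$ above.
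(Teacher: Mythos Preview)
Your proof is correct and follows essentially the same route as the paper: approximate the $da$-integral by Riemann sums, use linearity to write each sum as a single stochastic integral, pass to the limit inside via uniform convergence of the integrand together with the continuity of $I$ (Lemma~\ref{le4}/Lemma~\ref{le1}), pass to the limit outside via the joint continuity of $(a,t)\mapsto\int_{0}^{t}sgn(B_{s}-a)dB_{s}$ (Corollary~\ref{co2}/Theorem~\ref{th2}), and reconcile the two q.s.\ limits along a subsequence using Proposition~\ref{pr1}. Your explicit bound $\sup_{x}|g_{m}(x)-g(x)|\leq 2N\,\omega_{f}(|\pi_{m}|)+2\|f\|_{\infty}|\pi_{m}|$ simply makes precise what the paper states as ``convergence \ldots\ uniformly w.r.t.\ $x$''.
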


\begin{proof}
   We only prove the fist equality, the second equality can be proved in a similar way.
   Without loss of generality we can assume that $f$ has its support in
$[0,1]$. Let
\begin{eqnarray*}\label{}
\varphi_{n}(x)=\sum\limits_{k=0}^{2^{n}-1}\frac{1}{2^{n}}f(\frac{k}{2^{n}})sgn(x-\frac{k}{2^{n}}),
x\in\mathbb{R}, n\geq 1.
\end{eqnarray*}
Then
\begin{eqnarray*}\label{}
\int_{0}^{t}\varphi_{n}(B_{s})dB_{s}
=\sum\limits_{k=0}^{2^{n}-1}\frac{1}{2^{n}}f(\frac{k}{2^{n}})\int_{0}^{t}sgn(B_{s}-\frac{k}{2^{n}})dB_{s}.
\end{eqnarray*}
 By the proof Theorem \ref{th2}  we know that
  the integral $\int_{0}^{t}sgn(B_{s}-a)dB_{s}$ is jointly continuous in $(a,
t)$. Therefore,
\begin{eqnarray}\label{eq4}
\int_{0}^{1}\Big(\int_{0}^{t}f(a)sgn(B_{s}-a)dB_{s}\Big)da
=\lim\limits_{n\rightarrow\infty}\int_{0}^{t}\varphi_{n}(B_{s})dB_{s},
\ q.s.
\end{eqnarray}
From Lemma \ref{le1} and the convergence of $\varphi_{n}(x) $
 to $\int_{0}^{1}f(a)sgn(x-a)da$, uniformly w.r.t $x\in\mathbb{R}$, we
 obtain
\begin{eqnarray*}\label{}
&&\mathbb{\hat{E}}[|\int_{0}^{t}\varphi_{n}(B_{s})dB_{s}
-\int_{0}^{t}\Big(\int_{0}^{1}f(a)sgn(B_{s}-a)da\Big)dB_{s}|^{2}]\\
&\leq&C\mathbb{\hat{E}}[\int_{0}^{t}\Big(\varphi_{n}(B_{s})-\int_{0}^{1}f(a)sgn(B_{s}-a)da\Big)^{2}ds]\\
 &\rightarrow & 0, \ \text{as}\ n\rightarrow\infty.
\end{eqnarray*}
Using Proposition \ref{pr1} we can deduce the existence of  a
subsequence $\{\varphi_{n_{k}}\}_{k=1}^{\infty}$ such that
\begin{eqnarray}\label{eq5}
\int_{0}^{t}\Big(\int_{0}^{1}f(a)sgn(B_{s}-a)da\Big)dB_{s}
=\lim\limits_{k\rightarrow\infty}\int_{0}^{t}\varphi_{n_{k}}(B_{s})dB_{s},
\ q.s.
\end{eqnarray}
Finally, the relations (\ref{eq4}) and (\ref{eq5}) yield
 \begin{eqnarray*}\label{}
\int_{0}^{1}f(a)\Big(\int_{0}^{t}sgn(B_{s}-a)dB_{s}\Big)da
=\int_{0}^{t}\Big(\int_{0}^{1}f(a)sgn(B_{s}-a)da\Big)dB_{s},\ q.s.
\end{eqnarray*}
 The proof is complete.
\end{proof}\vskip3mm

We now establish an occupation time formula.  For the classical
case, we refer to \cite{CW}, \cite{KS} and \cite{RY1999}.
\begin{theorem}\label{th10}
 For all $t\geq 0$ and all reals $a\leq b$, we have
 \begin{eqnarray*}\label{}
\int_{0}^{t}1_{(a,b)}(B_{s})d\langle
B\rangle_{s}=\int_{a}^{b}L_{t}^{x}dx, \ q.s.
\end{eqnarray*}
\end{theorem}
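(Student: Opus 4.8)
The plan is to prove the occupation time formula by establishing it first for a general continuous compactly supported nonnegative test function $f$ and then specializing to indicator functions. The starting point is Tanaka's formula (Theorem \ref{th3}), which gives, for each fixed $a$,
\[
L_t^{a}=|B_t-a|-|a|-\int_0^t sgn(B_s-a)\,dB_s.
\]
First I would integrate this identity against $f(a)\,da$ over $\mathbb{R}$. The left-hand side produces $\int_{-\infty}^{\infty}f(a)L_t^{a}\,da$, which is well-defined because the joint continuity of $(a,t)\mapsto L_t^{a}$ from Theorem \ref{th2} makes the integrand a genuine continuous function of $a$ quasi-surely. The first two terms on the right, $\int f(a)(|B_t-a|-|a|)\,da$, are ordinary Lebesgue integrals of continuous functions and can be handled pathwise. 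The delicate term is $\int_{-\infty}^{\infty}f(a)\big(\int_0^t sgn(B_s-a)\,dB_s\big)\,da$, and this is exactly where Lemma \ref{le2} enters: it lets me exchange the $da$-integral with the stochastic $dB_s$-integral, rewriting this term as $\int_0^t\big(\int_{-\infty}^{\infty}f(a)sgn(B_s-a)\,da\big)dB_s$.

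Having performed the interchange, the key computation is to identify the inner spatial integral. Setting $F(x):=\int_{-\infty}^{\infty}f(a)sgn(x-a)\,da$, an elementary calculation gives $F'(x)=2\int_{-\infty}^{\infty}f(a)\,1_{\{a<x\}}\,da$ and hence a $C^2$ antiderivative structure: if I define $g$ by $g(x):=\int_{-\infty}^{\infty}f(a)|x-a|\,da$, then $g''(x)=2f(x)$ and $g'(x)=F(x)$. Thus the combination of all three terms is precisely the generalized It\^o formula (Theorem \ref{th1}) applied to $g(B_t)$:
\[
g(B_t)-g(0)=\int_0^t g'(B_s)\,dB_s+\frac12\int_0^t g''(B_s)\,d\langle B\rangle_s.
\]
Comparing this with the $f$-integrated Tanaka formula, the $dB_s$-integrals and the $|B_t-a|$-terms cancel identically, leaving
\[
\int_{-\infty}^{\infty}f(a)L_t^{a}\,da=\tfrac12\int_0^t g''(B_s)\,d\langle B\rangle_s=\int_0^t f(B_s)\,d\langle B\rangle_s,\quad q.s.
\]
This is the occupation time formula in its general ``test-function'' form.

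The final step is to pass from continuous $f$ to the indicator $1_{(a,b)}$. I would approximate $1_{(a,b)}$ from below by a monotone increasing sequence of continuous compactly supported functions $f_n\uparrow 1_{(a,b)}$ and invoke monotone convergence on both sides: on the left, $\int f_n(x)L_t^{x}\,dx\to\int_a^b L_t^{x}\,dx$ by monotone convergence in the Lebesgue integral (using joint continuity so that $x\mapsto L_t^x$ is a bona fide integrand), and on the right, $\int_0^t f_n(B_s)\,d\langle B\rangle_s\to\int_0^t 1_{(a,b)}(B_s)\,d\langle B\rangle_s$ by monotone convergence applied to the $d\langle B\rangle_s$-integral along each path outside the polar set. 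The only subtlety in this limiting argument is the behavior at the endpoints $a$ and $b$: since the approximation is only an inequality on a null set of $x$-values, I expect the boundary contribution to vanish, and this is guaranteed by Corollary \ref{co1}, which states $\int_0^t 1_{\{x\}}(B_s)\,d\langle B\rangle_s=0$ quasi-surely for each fixed level $x$.

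The main obstacle I anticipate is not the algebra of the It\^o comparison but rather the careful justification of the two interchanges of integration. The $da$–$dB_s$ Fubini exchange is already packaged in Lemma \ref{le2}, but its hypotheses require $f$ continuous with compact support, so the indicator case genuinely needs the separate monotone-convergence argument rather than a direct application. Controlling the convergence of the stochastic integrals under the sublinear expectation $\mathbb{\hat{E}}$ is where the classical dominated convergence theorem is unavailable (as stressed in the introduction); I would therefore lean on the Burkh\"older--Davis--Gundy estimate of Lemma \ref{le1} together with Proposition \ref{pr} to obtain the requisite $\mathbb{L}^2$-convergence, then extract a quasi-sure convergent subsequence via Proposition \ref{pr1} to upgrade to the ``q.s.'' statement in the theorem.
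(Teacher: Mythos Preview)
Your approach is correct but takes a genuinely different route from the paper's. You follow the classical ``test-function'' strategy: integrate the exact Tanaka formula against a continuous compactly supported $f$, invoke Lemma~\ref{le2} to swap the $da$- and $dB_s$-integrals, and then recognise the resulting expression as It\^o's formula applied to the $C^2$ function $g(x)=\int f(a)|x-a|\,da$ (which satisfies $g''=2f$), yielding $\int f(a)L_t^{a}\,da=\int_0^t f(B_s)\,d\langle B\rangle_s$; the indicator case is then obtained by monotone approximation $f_n\uparrow 1_{(a,b)}$. The paper instead integrates the \emph{approximate} identity~(\ref{equation1}) for the smoothed function $\varphi_{x,\varepsilon}$ directly over $x\in[a,b]$, applies the second part of Lemma~\ref{le2} to the $1_{[y,\infty)}$-integrands, and lets $\varepsilon\downarrow 0$ on both sides; the indicator $1_{(a,b)}$ emerges from the pointwise limit $\tfrac{1}{2\varepsilon}\int_a^b 1_{(x-\varepsilon,x+\varepsilon)}(z)\,dx\to 1_{(a,b)}(z)+\tfrac12 1_{\{a,b\}}(z)$, with the endpoint contribution removed via Corollary~\ref{co1}. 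Your route has the advantage of producing the general occupation formula for continuous test functions as a clean intermediate result, while the paper's route avoids introducing the auxiliary function $g$ and the second approximation step, at the cost of carrying the parameter $\varepsilon$ throughout. One simplification you should note: in your final step the passage from $f_n$ to $1_{(a,b)}$ involves no stochastic integrals whatsoever---both $\int f_n(x)L_t^{x}\,dx$ and $\int_0^t f_n(B_s)\,d\langle B\rangle_s$ are pathwise Lebesgue/Stieltjes integrals over a countable family of $n$, so monotone convergence applies directly outside a single polar set; the BDG/Proposition~\ref{pr}/subsequence-extraction machinery you mention at the end is not needed there.
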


\begin{proof}
 Put $\varphi_{x,\varepsilon}(y):=\varphi_{\varepsilon}(y-x)$, where
 $\varphi_{\varepsilon}$ is the function in the proof of Theorem
 \ref{th3}. In analogy to equality (\ref{equation1}) we have
\begin{eqnarray}\label{equation10}
\varphi_{x,\varepsilon}(B_{t})=\varphi_{x,\varepsilon}(B_{0})
+\int_{0}^{t}\varphi_{x,\varepsilon}'(B_{s}
)dB_{s}+\frac{1}{2\varepsilon}\int_{0}^{t}1_{ (x-\varepsilon,
x+\varepsilon)}(B_{s})d\langle B \rangle_{s},
\end{eqnarray}
where
$$\varphi_{x,\varepsilon}'(z)=\dfrac{1}{\varepsilon}\int_{x-\varepsilon}^{x+\varepsilon}
1_{[y,\infty)}(z)dy-1.$$
 Thanks to Lemma \ref{le2}, we have
\begin{eqnarray*}\label{}
\int_{0}^{t}\varphi_{x,\varepsilon}'(B_{s})dB_{s}
&=&\dfrac{1}{\varepsilon}\int_{0}^{t}\int_{x-\varepsilon}^{x+\varepsilon}
1_{[y,\infty)}(B_{s})dydB_{s}-B_{t}\\
&=&\dfrac{1}{\varepsilon}\int_{x-\varepsilon}^{x+\varepsilon}\int_{0}^{t}
1_{[y,\infty)}(B_{s})dB_{s}dy-B_{t}.
\end{eqnarray*}
Therefore, by equality (\ref{equation10}) we have
\begin{eqnarray}\label{eq6}
&&\int_{a}^{b}\Big(\varphi_{x,\varepsilon}(B_{t})-\varphi_{x,\varepsilon}(B_{0})
-\dfrac{1}{\varepsilon}\int_{x-\varepsilon}^{x+\varepsilon}\int_{0}^{t}
1_{[y,\infty)}(B_{s})dB_{s}dy+B_{t}\Big)dx\nonumber\\
&&=\frac{1}{2\varepsilon}\int_{a}^{b}\int_{0}^{t}1_{ (x-\varepsilon,
x+\varepsilon)}(B_{s})d\langle B \rangle_{s}dx.
\end{eqnarray}
According to Corollary \ref{co2}, we obtain that
\begin{eqnarray}\label{eq7}
&&\lim\limits_{\varepsilon\downarrow
0}\int_{a}^{b}\Big(\varphi_{x,\varepsilon}(B_{t})-\varphi_{x,\varepsilon}(B_{0})
-\dfrac{1}{\varepsilon}\int_{x-\varepsilon}^{x+\varepsilon}\int_{0}^{t}
1_{[y,\infty)}(B_{s})dB_{s}dy+B_{t}\Big)dx\nonumber\\
&&=\int_{a}^{b}\Big(|B_{t}-x|-|x|-2\int_{0}^{t}
1_{[x,\infty)}(B_{s})dB_{s}+B_{t}\Big)dx\nonumber\\
&&=\int_{a}^{b}\Big(|B_{t}-x|-|x|-\int_{0}^{t}sgn(B_{s}-x)dB_{s}\Big)dx,\
q.s.
\end{eqnarray}

For $z\in\mathbb{R}$, we have
\begin{eqnarray*}\label{}
\lim\limits_{\varepsilon\downarrow
0}\dfrac{1}{2\varepsilon}\int_{a}^{b}1_{ (x-\varepsilon,
x+\varepsilon)}(z)dx=1_{ (a, b)}(z)+\dfrac{1}{2}1_{ \{
a\}}(z)+\dfrac{1}{2}1_{ \{ b\}}(z).
\end{eqnarray*}
Therefore,
\begin{eqnarray}\label{eq8}
&&\lim\limits_{\varepsilon\downarrow
0}\frac{1}{2\varepsilon}\int_{a}^{b}\int_{0}^{t}1_{ (x-\varepsilon,
x+\varepsilon)}(B_{s})d\langle B \rangle_{s}dx\nonumber\\
&&=\lim\limits_{\varepsilon\downarrow
0}\frac{1}{2\varepsilon}\int_{0}^{t}\int_{a}^{b}1_{ (x-\varepsilon,
x+\varepsilon)}(B_{s})dxd\langle B \rangle_{s}\nonumber\\
&&=\int_{0}^{t}1_{(a,b)}(B_{s})d\langle
B\rangle_{s}+\frac{1}{2}\int_{0}^{t}1_{\{a,b\}}(B_{s})d\langle
B\rangle_{s}, \ q.s.
\end{eqnarray}
From Corollary \ref{co1} we know that
\begin{eqnarray}\label{eq9}
\int_{0}^{t}1_{\{a, b \}}(B_{s})d\langle B \rangle_{ s}=0,\ q.s.
\end{eqnarray}
Consequently, from (\ref{eq6}), (\ref{eq7}), (\ref{eq8}) and
(\ref{eq9}) it follows that
\begin{eqnarray*}
\int_{a}^{b}\Big(|B_{t}-x|-|x|-\int_{0}^{t}sgn(B_{s}-x)dB_{s}\Big)dx=\int_{0}^{t}1_{(a,b)}(B_{s})d\langle
B\rangle_{s},\ q.s.
\end{eqnarray*}
Finally, thanks to Theorem \ref{th3}, we have
\begin{eqnarray*}
\int_{a}^{b}L_{t}^{x}dx=\int_{0}^{t}1_{(a,b)}(B_{s})d\langle
B\rangle_{s},\ q.s.
\end{eqnarray*}
 The proof is complete.
\end{proof}\vskip3mm

We now study the quadratic variation of the local time for the
$G$-Brownian motion. For simplicity, we denote, for $t\geq 0$ and
$x\in\mathbb{R}$,
 \begin{eqnarray*}\label{}
&&Y_{t}^{x} =\int_{0}^{t}sgn(B_{s}-x)dB_{s},\\
&& N_{t}^{x} =|B_{t}-x|-|x|.
\end{eqnarray*}
Then, from Theorem \ref{th3} we have
\begin{eqnarray*}\label{}
L_{t}^{x}= N_{t}^{x} -Y_{t}^{x}.
\end{eqnarray*}

We first give the following lemma, which is an immediate consequence
of the proof of Theorem \ref{th2}.
\begin{lemma}\label{le30}
 For every
$p\geq1$, there exists a constant $C=C(p)>0$ such that
 \begin{eqnarray*}\label{}
\mathbb{\hat{E}}[|N_{t}^{x} -N_{s}^{y}|^{p}]\leq C(|x -y|^{p}+|t
-s|^{\frac{p}{2}}),
\end{eqnarray*}
 \begin{eqnarray*}\label{}
\mathbb{\hat{E}}[|Y_{t}^{x}-Y_{t}^{y}|^{p}]\leq C|x
-y|^{\frac{p}{2}},
\end{eqnarray*}
 for all $t,s\in[0,T]$ and $x,y\in\mathbb{R}$.
\end{lemma}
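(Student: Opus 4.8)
The plan is to prove the two estimates separately, in both cases transcribing ingredients already assembled in the proof of Theorem \ref{th2}. The bound on the increments of $N_t^x=|B_t-x|-|x|$ is elementary. First I would apply the triangle inequality twice to get
\[|N_t^x - N_s^y| = \big| (|B_t - x| - |x|) - (|B_s - y| - |y|) \big| \le |B_t - B_s| + 2|x-y|.\]
Raising this to the power $p$, invoking $(u+v)^p \le 2^{p-1}(u^p+v^p)$ (valid since $p\ge1$) and the subadditivity of $\hat{\mathbb{E}}$, I reduce the first assertion to bounding $\hat{\mathbb{E}}[|B_t-B_s|^p]$. Since $B_t-B_s$ is $G$-normal with variance interval $[\underline{\sigma}^2(t-s),\overline{\sigma}^2(t-s)]$, it is distributed as $\sqrt{t-s}\,\xi$ with $\xi\sim\mathcal{N}(0,[\underline{\sigma}^2,\overline{\sigma}^2])$, so $\hat{\mathbb{E}}[|B_t-B_s|^p]=|t-s|^{p/2}\hat{\mathbb{E}}[|\xi|^p]=C_p|t-s|^{p/2}$. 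Combining the two contributions gives the first inequality with a constant depending only on $p$.

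For the second estimate I would assume without loss of generality $x<y$ and put $\delta=y-x$. Writing $Y_t^x-Y_t^y=\int_0^t\big(sgn(B_s-x)-sgn(B_s-y)\big)\,dB_s$, I note that the integrand lies in $\widetilde{M}_{\ast}^{2}(0,T)$ by Lemma \ref{le20} and is bounded in absolute value by $2\cdot 1_{[x,y]}(B_s)$. Applying the Burkh\"older--Davis--Gundy inequality of Lemma \ref{le1} with index $2q=p$ yields
\[\hat{\mathbb{E}}[|Y_t^x - Y_t^y|^p] \le c_{p/2}\,\hat{\mathbb{E}}\Big[\Big(\int_0^t \big(sgn(B_s-x)-sgn(B_s-y)\big)^2 \, d\langle B\rangle_s\Big)^{p/2}\Big] \le 4^{p/2} c_{p/2}\,\hat{\mathbb{E}}\Big[\Big(\int_0^t 1_{[x,y]}(B_s)\,d\langle B\rangle_s\Big)^{p/2}\Big].\]
This reduces everything to controlling $\hat{\mathbb{E}}\big[(\int_0^t 1_{[x,y]}(B_s)\,d\langle B\rangle_s)^{p/2}\big]$ by $C\delta^{p/2}$.

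To do this I would reuse the occupation estimate from the proof of Proposition \ref{pr}, which reappears as inequality (\ref{eq2}) in the proof of Theorem \ref{th2}: with the $C^2$ function $\varphi$ there (parameters $a=x$ and $\delta$) one has $\int_0^t 1_{[x,y]}(B_s)\,d\langle B\rangle_s \le 8\delta|B_t| + 2\delta^2\,|\int_0^t \varphi'(B_s)\,dB_s|$. Raising to the power $p/2$, using subadditivity together with $(u+v)^q\le 2^q(u^q+v^q)$ (valid for every $q>0$), and estimating the stochastic-integral term by BDG (Lemma \ref{le1}) with $|\varphi'|\le 4\delta^{-1}$ and $\langle B\rangle_t\le\overline{\sigma}^2 T$, I obtain two contributions, $C_p\delta^{p/2}\hat{\mathbb{E}}[|B_t|^{p/2}]$ and $C_p\delta^{p}\cdot\delta^{-p/2}\hat{\mathbb{E}}[\langle B\rangle_t^{p/4}]$, both bounded by $C\delta^{p/2}$ because $\hat{\mathbb{E}}[|B_T|^{p/2}]$ and $\hat{\mathbb{E}}[\langle B\rangle_T^{p/4}]$ are finite. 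This gives the second inequality.

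The only genuinely delicate point, and the main thing I would watch, is the bookkeeping of exponents for an arbitrary $p\ge 1$ rather than an even integer: one must apply BDG with the correct fractional index (here $p/4$ for the inner integral) and keep track of the two regimes $q\ge1$ and $0<q<1$ in the elementary inequality $(u+v)^q\le C_q(u^q+v^q)$. Everything else is a direct adaptation of the estimates established in the proof of Theorem \ref{th2}, which is precisely why the lemma is stated as an immediate consequence of it.
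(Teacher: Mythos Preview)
Your proposal is correct and follows essentially the same route as the paper: the lemma is stated there as an immediate consequence of the proof of Theorem~\ref{th2}, and the two ingredients you single out---the triangle-inequality estimate (\ref{4}) for $N_t^x$ and the BDG plus occupation-time chain (\ref{eq1})--(\ref{eq3}) for $Y_t^x$---are exactly the ones that yield the bounds. Your attention to the fractional-exponent bookkeeping for general $p\ge 1$ is appropriate; in the paper this is handled by working with exponent $2p$ throughout and then substituting.
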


 Now we give the main result in this section.

\begin{theorem}\label{}
 Let $\underline{\sigma}>0$. Then for all $p\geq1$ and $a\leq b$, we have
 along the sequence of partitions  of  $\pi_{n}=\{a_{i}^{n}=a+\dfrac{i(b-a)}{2^{n}}, \
i=0,1,\cdots,2^{n}\}, n\geq 1$, of the interval of $[a,b]$  the
following convergence
 \begin{eqnarray*}\label{}
\lim\limits_{n\rightarrow\infty}\sum\limits_{i=0}^{2^{n}-1}(L_{t}^{a_{i+1}^{n}}-L_{t}^{a_{i}^{n}})^{2}
=4\int_{a}^{b}L_{t}^{x}dx, \ \text{in} \ \mathbb{L}^{p},
\end{eqnarray*}
uniformly with respect to $t\in[0,T]$.
\end{theorem}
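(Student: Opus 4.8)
The plan is to start from the Tanaka formula of Theorem \ref{th3}, which writes $L_t^x = N_t^x - Y_t^x$ with $N_t^x = |B_t-x|-|x|$ and $Y_t^x = \int_0^t sgn(B_s-x)\,dB_s$. Writing $\delta_n = (b-a)2^{-n}$, $\Delta_i N = N_t^{a_{i+1}^n}-N_t^{a_i^n}$ and $\Delta_i Y = Y_t^{a_{i+1}^n}-Y_t^{a_i^n}$, I would expand
\[
\sum_{i}(L_t^{a_{i+1}^n}-L_t^{a_i^n})^2 = \sum_i(\Delta_i N)^2 - 2\sum_i \Delta_i N\,\Delta_i Y + \sum_i(\Delta_i Y)^2 .
\]
Since $|\Delta_i N|\le |a_{i+1}^n-a_i^n|=\delta_n$ and $\sum_i|\Delta_i N|\le b-a$, the first sum is bounded pathwise by $\delta_n(b-a)=(b-a)^2 2^{-n}$, hence tends to $0$ in every $\mathbb{L}^p$ uniformly in $t$. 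The cross term is controlled by the pathwise Cauchy--Schwarz inequality $|\sum_i\Delta_iN\,\Delta_iY|\le(\sum_i(\Delta_iN)^2)^{1/2}(\sum_i(\Delta_iY)^2)^{1/2}$, together with the bound $\int_a^bL_T^x\,dx=\int_0^T 1_{(a,b)}(B_s)\,d\langle B\rangle_s\le\overline{\sigma}^2T$ coming from the occupation formula; so once the last sum is understood the cross term vanishes as well. Everything therefore reduces to analysing $\sum_i(\Delta_iY)^2$.

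For that sum I would note that, up to the $d\langle B\rangle$-null sets where $B_s\in\{a_i^n\}$ (Corollary \ref{co1}), $sgn(B_s-a_{i+1}^n)-sgn(B_s-a_i^n)=-2\,1_{(a_i^n,a_{i+1}^n)}(B_s)$, so $\Delta_iY=-2Z_i(t)$ with $Z_i(t)=\int_0^t 1_{(a_i^n,a_{i+1}^n)}(B_s)\,dB_s$. Applying the generalized It\^o formula (Theorem \ref{th1}) to $Z_i(t)^2$ gives
\[
(\Delta_iY)^2 = 4Z_i(t)^2 = 8\int_0^t Z_i(s)1_{(a_i^n,a_{i+1}^n)}(B_s)\,dB_s + 4\int_0^t 1_{(a_i^n,a_{i+1}^n)}(B_s)\,d\langle B\rangle_s .
\]
Summing over $i$ and using $\sum_i 1_{(a_i^n,a_{i+1}^n)}=1_{(a,b)}$ q.s. under $d\langle B\rangle$ (Corollary \ref{co1}), the quadratic-variation parts add up to $4\int_0^t 1_{(a,b)}(B_s)\,d\langle B\rangle_s=4\int_a^bL_t^x\,dx$ by the occupation formula (Theorem \ref{th10}). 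Thus $\sum_i(\Delta_iY)^2 = 4\int_a^b L_t^x\,dx + R_n(t)$, where $R_n(t)=8\sum_i\int_0^t Z_i(s)1_{(a_i^n,a_{i+1}^n)}(B_s)\,dB_s$ is a single stochastic integral, and the whole theorem comes down to showing $\mathbb{\hat{E}}[\sup_{t\le T}|R_n(t)|^p]\to0$ for every $p$.

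To kill $R_n$ I would first upgrade Proposition \ref{pr} to all moments: using the same $C^2$ function $\varphi$, the inequality $\int_0^t 1_{[a,a+\delta]}(B_s)\,d\langle B\rangle_s\le 8\delta|B_t|+2\delta^2|\int_0^t\varphi'(B_s)\,dB_s|$, the moment bounds on $B_t$, and the Burkh\"older--Davis--Gundy inequality (Lemma \ref{le1}) give, for each $m\ge1$,
\[
\mathbb{\hat{E}}\Big[\Big(\int_0^t 1_{[a,a+\delta]}(B_s)\,d\langle B\rangle_s\Big)^m\Big]\le C_m\delta^m .
\]
Writing $V_i=\int_0^T 1_{(a_i^n,a_{i+1}^n)}(B_s)\,d\langle B\rangle_s$ and $Z_i^\ast=\sup_{t\le T}|Z_i(t)|$, Lemma \ref{le1} bounds $\mathbb{\hat{E}}[\sup_t|R_n(t)|^p]$ for $p\ge2$ by $C_p\,\mathbb{\hat{E}}[(\sum_i (Z_i^\ast)^2V_i)^{p/2}]$, since at each time only one indicator is nonzero. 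Applying Minkowski's inequality for the norm $\|\cdot\|_{p/2}$ (Proposition \ref{pr9}), then Cauchy--Schwarz and BDG together with $\mathbb{\hat{E}}[(Z_i^\ast)^{2p}]\le c\,\mathbb{\hat{E}}[V_i^p]$ and the moment bound above, yields $\|(Z_i^\ast)^2V_i\|_{p/2}\le C\delta_n^2$ for each $i$, whence
\[
\mathbb{\hat{E}}\Big[\sup_{t\le T}|R_n(t)|^p\Big]^{2/p}\le C\sum_{i=0}^{2^n-1}\delta_n^2 = C\,2^n\delta_n^2 = C(b-a)^2 2^{-n}\longrightarrow 0,\quad n\to\infty .
\]
The case $p<2$ then follows from the case $p=2$ via $\mathbb{\hat{E}}[|X|^p]\le\mathbb{\hat{E}}[|X|^2]^{p/2}$.

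The hard part is exactly this last step. The sum over the partition contains $2^n$ martingale contributions, and a naive estimate of each by its $\mathbb{L}^2$-size $\sim\delta_n$ would only give a bounded total; the gain comes from the fact that both factors $Z_i^\ast$ and $V_i$ are individually of order $\delta_n$ (each localized to a strip of width $\delta_n$), so that each product is of order $\delta_n^2$ and the $2^n$ terms sum to $O(2^{-n})$. Making this rigorous for all $p$ simultaneously is what forces the all-order occupation-time moment estimate and the use of Minkowski rather than mere subadditivity. The standing assumption $\underline{\sigma}>0$ enters through the comparison $\underline{\sigma}^2 t\le\langle B\rangle_t\le\overline{\sigma}^2 t$ underlying the estimates of Lemma \ref{le1} and Lemma \ref{le30} used throughout; with these in hand, collecting the three sums gives $\sum_i(L_t^{a_{i+1}^n}-L_t^{a_i^n})^2\to 4\int_a^b L_t^x\,dx$ in $\mathbb{L}^p$, uniformly in $t\in[0,T]$.
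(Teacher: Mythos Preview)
Your proof is correct and shares the paper's skeleton: write $L=N-Y$ via Tanaka, expand the squares, apply It\^o to $(\Delta_iY)^2$ so that the quadratic–variation parts sum to $4\int_a^bL_t^x\,dx$ by the occupation formula, and kill the leftover martingale term $R_n$. The details of how you estimate each piece differ from the paper in ways worth noting.

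For $\sum_i(\Delta_iN)^2$ and the cross term you use pathwise bounds (the Lipschitz inequality $|\Delta_iN|\le 2\delta_n$---your constant $1$ should be $2$, which is harmless---and Cauchy--Schwarz). The paper instead uses the $2^{n(p-1)}$ H\"older loss together with the moment bounds of Lemma~\ref{le30}; your pathwise route is more elementary and gives the same rate. For $R_n$, the paper applies BDG, Jensen (needing $p\ge2$), and then Cauchy--Schwarz against $\hat{\mathbb E}[\int_0^t 1_{(a_i,a_{i+1})}(B_s)\,ds]$, which is precisely where $\underline{\sigma}>0$ enters through Proposition~\ref{pr}, yielding the rate $2^{-n(p-1)/2}$. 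You instead upgrade Proposition~\ref{pr} to all moments in $d\langle B\rangle$ (this is implicit in the proof of Theorem~\ref{th2}, see inequality~(\ref{eq2})) and use Minkowski in $\|\cdot\|_{p/2}$ to sum $\|(Z_i^\ast)^2V_i\|_{p/2}\le C\delta_n^2$ over $2^n$ terms. This gives the sharper rate $2^{-np/2}$ and, because $V_i$ is defined via $d\langle B\rangle$, does not actually require $\underline{\sigma}>0$---a mild strengthening over the paper's argument, even though the theorem as stated assumes it.
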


\begin{proof}
By applying the generalized It\^o formula for the $G$-Brownian
motion (cf. Theorem \ref{th1}) we obtain
 \begin{eqnarray*}\label{}
&&\sum\limits_{i=0}^{2^{n}-1}(L_{t}^{a_{i+1}^{n}}-L_{t}^{a_{i}^{n}})^{2}
-4\int_{a}^{b}L_{t}^{x}dx\\
&=&\sum\limits_{i=0}^{2^{n}-1}(N_{t}^{a_{i+1}^{n}}-N_{t}^{a_{i}^{n}}-Y_{t}^{a_{i+1}^{n}}+Y_{t}^{a_{i}^{n}})^{2}
-4\int_{a}^{b}L_{t}^{x}dx\\
&=&\sum\limits_{i=0}^{2^{n}-1}\Big((N_{t}^{a_{i+1}^{n}}-N_{t}^{a_{i}^{n}})^{2}
-2(N_{t}^{a_{i+1}^{n}}-N_{t}^{a_{i}^{n}})(Y_{t}^{a_{i+1}^{n}}-Y_{t}^{a_{i}^{n}})\\&&
+(Y_{t}^{a_{i+1}^{n}}-Y_{t}^{a_{i}^{n}})^{2}\Big)-4\int_{a}^{b}L_{t}^{x}dx\\
&=&\sum\limits_{i=0}^{2^{n}-1}\Big((N_{t}^{a_{i+1}^{n}}-N_{t}^{a_{i}^{n}})^{2}
-2(N_{t}^{a_{i+1}}-N_{t}^{a_{i}^{n}})(Y_{t}^{a_{i+1}^{n}}-Y_{t}^{a_{i}^{n}})\\&&
+4\int_{0}^{t}(Y_{s}^{a_{i+1}^{n}}-Y_{s}^{a_{i}^{n}})1_{(a_{i}^{n},a_{i+1}^{n})}(B_{s})dB_{s}+
4\int_{0}^{t} 1_{(a_{i}^{n},a_{i+1}^{n})}(B_{s})d\langle
B\rangle_{s}\Big)-4\int_{a}^{b}L_{t}^{x}dx.
\end{eqnarray*}
Thus, thanks to Theorem \ref{th10}, we get
 \begin{eqnarray}\label{eq10}
&&\sum\limits_{i=0}^{2^{n}-1}(L_{t}^{a_{i+1}^{n}}-L_{t}^{a_{i}^{n}})^{2}
-4\int_{a}^{b}L_{t}^{x}dx\nonumber\\
&=&\sum\limits_{i=0}^{2^{n}-1}(N_{t}^{a_{i+1}^{n}}-N_{t}^{a_{i}^{n}})^{2}
-2\sum\limits_{i=0}^{2^{n}-1}(N_{t}^{a_{i+1}^{n}}-N_{t}^{a_{i}^{n}})(Y_{t}^{a_{i+1}^{n}}-Y_{t}^{a_{i}^{n}})
\nonumber\\&&+4\sum\limits_{i=0}^{2^{n}-1}\int_{0}^{t}
(Y_{s}^{a_{i+1}^{n}}-Y_{s}^{a_{i}^{n}})1_{(a_{i}^{n},a_{i+1}^{n})}(B_{s})dB_{s}.
\end{eqnarray}
Then, from Lemma \ref{le30} and the subadditivity of the sublinear
expectation $\mathbb{\hat{E}}[\cdot]$ it follows that
 \begin{eqnarray}\label{eq11}
\mathbb{\hat{E}}[\Big(\sum\limits_{i=0}^{2^{n}-1}(N_{t}^{a_{i+1}^{n}}-N_{t}^{a_{i}^{n}})^{2}\Big)^{p}]
&\leq&
2^{n(p-1)}\sum\limits_{i=0}^{2^{n}-1}\mathbb{\hat{E}}[(N_{t}^{a_{i+1}^{n}}-N_{t}^{a_{i}^{n}})^{2p}]\nonumber\\
&\leq&C  2^{n(p-1)} 2^{n} 2^{-2np}= C 2^{-np}.
\end{eqnarray}
Moreover, by virtue of  Lemma \ref{le30}, the subadditivity of the
sublinear expectation $\mathbb{\hat{E}}[\cdot]$ and H\"{o}lder
inequality for the sublinear expectation (see \cite{Peng2008}) we
can estimate the second term at the right hand of (\ref{eq10}) as
follows:
 \begin{eqnarray}\label{eq12}
&&\mathbb{\hat{E}}[\Big(\sum\limits_{i=0}^{2^{n}-1}(N_{t}^{a_{i+1}^{n}}
-N_{t}^{a_{i}^{n}})(Y_{t}^{a_{i+1}^{n}}-Y_{t}^{a_{i}^{n}})\Big)^{p}]\nonumber\\
&\leq& 2^{n(p-1)}\sum\limits_{i=0}^{2^{n}-1}
\mathbb{\hat{E}}[|(N_{t}^{a_{i+1}^{n}}-N_{t}^{a_{i}^{n}})(Y_{t}^{a_{i+1}^{n}}-Y_{t}^{a_{i}^{n}})|^{p}]\nonumber\\
&\leq&2^{n(p-1)}\sum\limits_{i=0}^{2^{n}-1}
\Big(\mathbb{\hat{E}}[|N_{t}^{a_{i+1}^{n}}-N_{t}^{a_{i}^{n}}|^{2p}]\Big)^{\frac{1}{2}}
\Big(\mathbb{\hat{E}}[|Y_{t}^{a_{i+1}^{n}}-Y_{t}^{a_{i}^{n}}|^{2p}]\Big)^{\frac{1}{2}}\nonumber\\
&\leq&C  2^{n(p-1)} 2^{n} 2^{-np}2^{-\frac{n}{2}p}= C
2^{-\frac{np}{2}}.
\end{eqnarray}
Here, for the latter estimate we have used Lemma \ref{le30}.

 Finally, let us estimate the third term at the right hand of (\ref{eq10}).
 Using again the subadditivity of the sublinear expectation
 $\mathbb{\hat{E}}[\cdot]$ and  H\"{o}lder inequality for the sublinear expectation as well as Lemma \ref{le1} we
 have
\begin{eqnarray*}\label{}
&&\mathbb{\hat{E}}[\Big(\sum\limits_{i=0}^{2^{n}-1}
\int_{0}^{t}(Y_{t}^{a_{i+1}^{n}}-Y_{t}^{a_{i}^{n}})1_{(a_{i}^{n},a_{i+1}^{n})}(B_{s})dB_{s}\Big)^{p}]
\nonumber\\
&\leq& C \mathbb{\hat{E}}[\Big(\int_{0}^{t}\big(\sum\limits_{i=0}^{2^{n}-1}
(Y_{t}^{a_{i+1}^{n}}-Y_{t}^{a_{i}^{n}})1_{(a_{i}^{n},a_{i+1}^{n})}(B_{s})\big)^{2}ds\Big)^{\frac{p}{2}}]\nonumber\\
&\leq&C\mathbb{\hat{E}}[\int_{0}^{t}\Big(\sum\limits_{i=0}^{2^{n}-1}
|Y_{t}^{a_{i+1}^{n}}-Y_{t}^{a_{i}^{n}}|1_{(a_{i}^{n},a_{i+1}^{n})}(B_{s})\Big)^{p}ds]\nonumber\\
&=&C\mathbb{\hat{E}}[\int_{0}^{t}\sum\limits_{i=0}^{2^{n}-1}
|Y_{t}^{a_{i+1}^{n}}-Y_{t}^{a_{i}^{n}}|^{p}1_{(a_{i}^{n},a_{i+1}^{n})}(B_{s})ds]\nonumber\\
&\leq&C\sum\limits_{i=0}^{2^{n}-1}\Big(\mathbb{\hat{E}}[\int_{0}^{t}
|Y_{t}^{a_{i+1}^{n}}-Y_{t}^{a_{i}^{n}}|^{2p}ds]\Big)^{\frac{1}{2}}\Big(\mathbb{\hat{E}}[\int_{0}^{t}
1_{(a_{i}^{n},a_{i+1}^{n})}(B_{s})ds]\Big)^{\frac{1}{2}}.\nonumber
\end{eqnarray*}
Consequently, thanks to Lemma \ref{le30} and Proposition \ref{pr},
we have
\begin{eqnarray}\label{eq13}
&&\mathbb{\hat{E}}[\Big(\sum\limits_{i=0}^{2^{n}-1}
\int_{0}^{t}(Y_{t}^{a_{i+1}^{n}}-Y_{t}^{a_{i}^{n}})1_{(a_{i}^{n},a_{i+1}^{n})}(B_{s})dB_{s}\Big)^{p}]
\nonumber\\
&&\leq C  2^{n} 2^{-\frac{np}{2}}2^{-\frac{n}{2}}= C
2^{-\frac{n(p-1)}{2}}.
\end{eqnarray}
Finally, by substituting the estimates (\ref{eq11}),(\ref{eq12}) and
(\ref{eq13}) for the right hand of (\ref{eq10}), we obtain
\begin{eqnarray*}\label{}
&&\mathbb{\hat{E}}[\Big(\sum\limits_{i=0}^{2^{n}-1}(L_{t}^{a_{i+1}}-L_{t}^{a_{i}})^{2}
-4\int_{a}^{b}L_{t}^{x}dx\Big)^{p}]\\
&&\leq C(2^{-np}+2^{-\frac{np}{2}}+
2^{-\frac{n(p-1)}{2}})\rightarrow 0, \ \text{as} \
n\rightarrow\infty,
\end{eqnarray*}
uniformly with respect to $t\in[0,T]$. The proof is complete.
\end{proof}

\section{A generalized It\^{o} formula for convex functions}
In this section,  our objective is to  obtain a generalization of
It\^{o}'s formula for the $G$-Brownian motion with help of the
Tanaka formula for the $G$-Brownian motion, which generalizes the
corresponding result in \cite{Peng2009}.

 Given a convex function $f:\mathbb{R}\rightarrow\mathbb{R}$, we associate with the
 measure $\mu$ on $\mathcal {B}(\mathbb{R})$ which is defined as follows:
 \begin{eqnarray*}\label{}
\mu[a,b)=f'_{-}(b)-f'_{-}(a),\ \text{for all}\ a< b.
\end{eqnarray*}
Then, for  $\varphi\in C_{K}^{2}(\mathbb{R})$, we have
 \begin{eqnarray*}\label{}
\int_{\mathbb{R}}\varphi''(x)f(x)dx=\int_{\mathbb{R}}\varphi(x)\mu(dx).
\end{eqnarray*}
This relation allows to identify the generalized second derivative
of $f$ with the measure $\mu$.

Now we establish the main result in this section.
\begin{theorem}\label{th4}
 Let $f:\mathbb{R}\rightarrow\mathbb{R}$ be a convex function and assume that its right derivative
 $f'_{+}$ is bounded and $f'_{-}(B_{\cdot}) \in M_{\omega}^{2}(0,T)$. Then, for all $t\geq 0$,
 \begin{eqnarray*}\label{}
f(B_{t})= f(0)+
\int_{0}^{t}f'_{-}(B_{s})dB_{s}+\int_{-\infty}^{\infty}L_{t}^{a}f''(da),
\end{eqnarray*}
where $f''=\mu$ is the measure introduced above.
\end{theorem}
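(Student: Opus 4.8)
The plan is to reduce a general convex $f$ to the elementary convex functions $x\mapsto|x-a|$, for which the asserted identity is nothing but the Tanaka formula of Theorem \ref{th3}, and then to superpose these identities against the second-derivative measure $\mu=f''$. Since $f'_{+}$ is bounded, both $f'_{-}$ and $f'_{+}$ are bounded and nondecreasing, so $\mu$ is a \emph{finite} positive measure with $\mu(\mathbb{R})=f'_{-}(+\infty)-f'_{-}(-\infty)<\infty$. Convex analysis then yields the representation
\[
f(x)=f(0)+cx+\tfrac{1}{2}\int_{\mathbb{R}}\big(|x-a|-|a|\big)\,\mu(da),
\]
with $c$ a constant and the integrand dominated by $|x|$, hence $\mu$-integrable; differentiating identifies $f'_{-}(x)=c+\tfrac{1}{2}\int_{\mathbb{R}}\mathrm{sgn}(x-a)\,\mu(da)$ (left-continuous versions).

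Next I would apply Theorem \ref{th3} in the form $|B_{t}-a|-|a|=\int_{0}^{t}\mathrm{sgn}(B_{s}-a)\,dB_{s}+L_{t}^{a}$ and integrate it against $\mu(da)$. What makes this licit simultaneously for all $a$ on one common non-polar set — rather than only q.s.\ for each fixed $a$ — is the joint continuity in $a$ of all three ingredients: $a\mapsto|B_{t}-a|$ is continuous, $a\mapsto\int_{0}^{t}\mathrm{sgn}(B_{s}-a)\,dB_{s}$ admits a jointly continuous modification by Corollary \ref{co2}, and $a\mapsto L_{t}^{a}$ by Theorem \ref{th2}. Because $L_{t}^{\cdot}$ is continuous and vanishes outside the compact range of $s\mapsto B_{s}$ on $[0,t]$ while $\mu$ is finite, the local-time term $\tfrac{1}{2}\int_{\mathbb{R}}L_{t}^{a}\,f''(da)$ is finite, q.s.

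The crux is to commute the spatial integral $\int\cdots\,\mu(da)$ with the It\^o integral, i.e.\ to prove
\[
\int_{\mathbb{R}}\Big(\int_{0}^{t}\mathrm{sgn}(B_{s}-a)\,dB_{s}\Big)\mu(da)=\int_{0}^{t}\Big(\int_{\mathbb{R}}\mathrm{sgn}(B_{s}-a)\,\mu(da)\Big)dB_{s},\ q.s.
\]
This is exactly the stochastic Fubini theorem of Lemma \ref{le2}, now carried out with the finite measure $\mu$ in place of $f(a)\,da$; I would reprove it by the same scheme, approximating $\mu$ by discrete measures $\sum_{k}\mu(I_{k}^{n})\,\delta_{a_{k}^{n}}$ (for which the interchange is immediate by linearity), and passing to the limit through the $\mathbb{L}^{2}$-isometry and Burkh\"older estimate of Lemma \ref{le1} (equivalently Lemma \ref{le4}), extracting a quasi-surely convergent subsequence via Proposition \ref{pr1}. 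Combining the affine contribution $cB_{t}=\int_{0}^{t}c\,dB_{s}$ with the superposed It\^o integrals collapses the drift integrand to $c+\tfrac{1}{2}\int_{\mathbb{R}}\mathrm{sgn}(B_{s}-a)\,\mu(da)=f'_{-}(B_{s})$; here $f'_{-}(B_{s})$ and $f'_{+}(B_{s})$ differ only where $B_{s}$ meets the countably many atoms of $\mu$, a set of zero $d\langle B\rangle$-measure by Corollary \ref{co1}, so the integral is unambiguous, and the hypothesis $f'_{-}(B_{\cdot})\in M_{\omega}^{2}(0,T)$ guarantees it is well defined. Assembling the three contributions produces the stated identity.

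I expect the main obstacle to be precisely this measure-valued stochastic Fubini step, together with securing the Tanaka decomposition simultaneously in $a$ on a single polar-exceptional set: the classical dominated convergence theorem is unavailable on the sublinear expectation space, so every passage to the limit must instead be forced through the $\mathbb{L}^{2}$ estimates of Lemma \ref{le1} and the quasi-sure subsequence extraction of Proposition \ref{pr1}, exactly as in the proof of Lemma \ref{le2}. An essentially equivalent route I would keep in reserve is to mollify $f$ into smooth convex functions $f_{n}=f*\eta_{n}$, apply the smooth It\^o formula (the Example following Theorem \ref{th1}), rewrite the second-order term by the occupation formula of Theorem \ref{th10} as $\tfrac{1}{2}\int_{0}^{t}f_{n}''(B_{s})\,d\langle B\rangle_{s}=\tfrac{1}{2}\int_{\mathbb{R}}f_{n}''(x)\,L_{t}^{x}\,dx$, and then let $n\to\infty$, using $f_{n}'\to f'_{-}$ to treat the drift and the weak convergence $f_{n}''(x)\,dx\rightharpoonup\mu$ tested against the continuous, compactly supported $x\mapsto L_{t}^{x}$ to treat the local-time term.
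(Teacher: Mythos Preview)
Your overall strategy coincides with the paper's: write $f$ as an affine function plus $\tfrac{1}{2}\int(|x-a|-|a|)\,\mu(da)$, integrate Tanaka's formula (Theorem \ref{th3}) against $\mu$, and then commute the spatial and stochastic integrals. The difference lies only in how the stochastic Fubini step is justified. The paper does \emph{not} discretize $\mu$; instead it first proves the interchange for a smooth $\varphi\in C_b^1$ in place of $\mathrm{sgn}$ by applying the generalized It\^o formula separately to $\Phi(B_t-a)$ (with $\Phi'=\varphi$) and to $g(B_t)$ where $g(x)=\int\Phi(x-a)\,f''(da)$, and then approximates $\mathrm{sgn}$ by $C^1$ cutoffs $\varphi_\varepsilon$, controlling the error via Proposition \ref{pr}. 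Your route---discretizing $\mu$ by $\sum_k\mu(I_k^n)\delta_{a_k^n}$ and following the scheme of Lemma \ref{le2}---also works, but be aware that the \emph{uniform} convergence used in Lemma \ref{le2} fails when $\mu$ has atoms; you must instead bound $|\varphi_n(x)-\psi(x)|\le 2\sum_k\mu(I_k^n)1_{I_k^n}(x)$ and then invoke Proposition \ref{pr} (not just Lemma \ref{le1}) to get $\hat{\mathbb E}[\int_0^t 1_{I_k^n}(B_s)\,d\langle B\rangle_s]\le C|I_k^n|$, which forces the $\widetilde M_*^2$-error to zero. The paper's smoothing-of-$\mathrm{sgn}$ approach sidesteps this atom issue entirely, which is what it buys; your discretization is closer to the classical Riemann-sum intuition. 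Your reserve route via mollification $f_n=f*\eta_n$ and the occupation formula (Theorem \ref{th10}) is also sound. Incidentally, your observation that $\mu$ is merely finite (not compactly supported) is more accurate than the paper's own claim; your use of $|x-a|-|a|$ rather than $|x-a|$ in the representation is the right way to accommodate this.
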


Before giving the proof of Theorem \ref{th4}, we establish the
following lemmas, which will be needed later.

\begin{lemma}\label{le4}
 If $f:\mathbb{R}\rightarrow[0,\infty)$ is a convex function and $\varphi\in C_{b}^{1}(\mathbb{R})$,
 then for all $t\geq 0$, we have
 \begin{eqnarray*}\label{}
\int_{-\infty}^{\infty}\Big(\int_{0}^{t}\varphi(B_{s}-a)dB_{s}\Big)f''(da)
=\int_{0}^{t}\Big(\int_{-\infty}^{\infty}\varphi(B_{s}-a)f''(da)\Big)dB_{s}.
\end{eqnarray*}
\end{lemma}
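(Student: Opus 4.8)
The plan is to prove this stochastic Fubini identity by the same approximation scheme that was used for Lemma \ref{le2}, now replacing Lebesgue measure $da$ by the Stieltjes measure $\mu=f''$ and the kernel $sgn(B_\cdot-a)$ by the smoother kernel $\varphi(B_\cdot-a)$. Since $\varphi$ is bounded and the identity is local in $a$, I would first reduce (exactly as in Lemma \ref{le2}) to the case where $\mu$ is a finite measure carried by a bounded interval; this is harmless because in the applications $f'_+$ is bounded, so that $\mu(\mathbb{R})=f'_-(+\infty)-f'_-(-\infty)<\infty$, and because $\|\varphi\|_\infty<\infty$ guarantees that $\int_{-\infty}^{\infty}\varphi(x-a)\mu(da)$ is a well-defined bounded function of $x$.

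Set $Z_t^{a}:=\int_0^t \varphi(B_s-a)\,dB_s$. The first substantial step is to produce a jointly continuous modification of $(a,t)\mapsto Z_t^{a}$. Because $\varphi\in C_b^1(\mathbb{R})$ is Lipschitz with constant $\|\varphi'\|_\infty$, Lemma \ref{le1} together with $\langle B\rangle_t\le\overline{\sigma}^2 t$ gives
\[
\mathbb{\hat{E}}[|Z_t^{a}-Z_t^{b}|^{2p}]\le c_p\,\mathbb{\hat{E}}\Big[\Big(\int_0^t(\varphi(B_s-a)-\varphi(B_s-b))^2\,d\langle B\rangle_s\Big)^p\Big]\le C_p\,|a-b|^{2p},
\]
and likewise $\mathbb{\hat{E}}[|Z_t^{a}-Z_s^{a}|^{2p}]\le C_p|t-s|^p$; combining these two bounds and choosing $p>2$, the generalized Kolmogorov criterion (Lemma \ref{Kolmogorov}), applied as in the proof of Theorem \ref{th2}, yields a modification of $Z$ that is jointly continuous, hence in particular $a\mapsto Z_t^{a}$ is continuous outside a single polar set.

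Next I would fix partitions $\{a_k^n\}$ of the bounded support with mesh tending to $0$ and introduce the Riemann--Stieltjes approximants $\Phi_n(x):=\sum_k \varphi(x-a_k^n)\,\mu\big([a_k^n,a_{k+1}^n)\big)$, so that, by linearity of the It\^o integral,
\[
\int_0^t\Phi_n(B_s)\,dB_s=\sum_k \mu\big([a_k^n,a_{k+1}^n)\big)\,Z_t^{a_k^n}.
\]
The right-hand side is a Riemann--Stieltjes sum for $\int Z_t^{a}\,\mu(da)$; since $a\mapsto Z_t^{a}$ is quasi-surely continuous and $\mu$ is finite, it converges quasi-surely to $\int_{-\infty}^{\infty}\big(\int_0^t\varphi(B_s-a)\,dB_s\big)\mu(da)$, i.e. to the left-hand side of the claim. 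On the other hand, the Lipschitz property of $\varphi$ gives $\sup_x|\Phi_n(x)-\Psi(x)|\le\|\varphi'\|_\infty\,\mu(\mathbb{R})\,\max_k|a_{k+1}^n-a_k^n|\to0$, where $\Psi(x):=\int_{-\infty}^{\infty}\varphi(x-a)\,\mu(da)$; hence by Lemma \ref{le1} (with $p=1$),
\[
\mathbb{\hat{E}}\Big[\Big|\int_0^t\Phi_n(B_s)\,dB_s-\int_0^t\Psi(B_s)\,dB_s\Big|^2\Big]\le c\,\mathbb{\hat{E}}\Big[\int_0^t(\Phi_n(B_s)-\Psi(B_s))^2\,d\langle B\rangle_s\Big]\to0,
\]
so that $\int_0^t\Phi_n(B_s)\,dB_s\to\int_0^t\Psi(B_s)\,dB_s$ in $\mathbb{L}^2$. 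Proposition \ref{pr1} then furnishes a subsequence converging quasi-surely, and identifying the two quasi-sure limits yields the asserted equality.

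The main obstacle, as in Lemma \ref{le2}, is reconciling two genuinely different notions of limit on a common polar-complement set: the left-hand side is built pathwise from the jointly continuous version of $Z^a$, whereas the right-hand side is an $\mathbb{L}^2$-limit of It\^o integrals. This is precisely why both the joint-continuity input (Lemma \ref{Kolmogorov}, used as in Theorem \ref{th2}) and the quasi-sure subsequence extraction (Proposition \ref{pr1}) are indispensable. A secondary technical point is the integrability of $\varphi$ against $\mu$ when $f''$ has infinite mass; I would dispose of this by the reduction to a finite measure of bounded support described at the outset, the general statement then following by an exhaustion of $\mathbb{R}$ by bounded intervals.
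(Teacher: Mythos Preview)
Your argument is correct and complete: the Kolmogorov estimate for $Z_t^a$, the Riemann--Stieltjes convergence on the side of the $a$-integral, the uniform convergence $\Phi_n\to\Psi$ on the side of the $dB$-integral, and the identification of the two limits via Proposition \ref{pr1} all go through exactly as you describe. The reduction to a finite measure of compact support is also the right move (the paper itself uses $\int f''(da)<\infty$ freely in its proof).

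However, the paper takes a genuinely different and considerably shorter route that exploits the hypothesis $\varphi\in C_b^{1}(\mathbb{R})$ more fully. Instead of approximating, it introduces the antiderivative $\Phi(x)=\int_0^x\varphi(y)\,dy\in C^2(\mathbb{R})$ and applies the $G$-It\^{o} formula (Theorem \ref{th1}) to $\Phi(B_t-a)$; this expresses $\int_0^t\varphi(B_s-a)\,dB_s$ as a difference of pathwise-defined objects (values of $\Phi$ and a $d\langle B\rangle$-integral), for which the ordinary deterministic Fubini theorem against $f''(da)$ applies directly. A second application of It\^{o}'s formula, now to the $C^2$ function $g(x)=\int\Phi(x-a)\,f''(da)$, produces the right-hand side of the identity in the same form, and the two expressions are seen to coincide term by term. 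Thus the paper sidesteps the Kolmogorov criterion, the joint-continuity argument, and the subsequence extraction entirely. Your approach, by contrast, is more robust: it would still work for $\varphi$ merely bounded and Lipschitz (or even H\"older), where no $C^2$ antiderivative is available and the paper's shortcut breaks down.
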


\begin{proof}
   We put
\begin{eqnarray*}\label{}
\Phi(x)=\int_{0}^{x}\varphi(y)dy, x\in\mathbb{R}.
\end{eqnarray*}
Then, by applying the generalized It\^o formula for the $G$-Brownian
motion (cf. Theorem \ref{th1}) to $\Phi(B_{t}-a)$, we deduce that
\begin{eqnarray*}
\Phi(B_{t}-a)=\Phi(-a)+\int_{0}^{t}\varphi(B_{s}-a
)dB_{s}+\frac{1}{2}\int_{0}^{t}\varphi'(B_{s}-a)d\left \langle
B\right \rangle _{s}.
\end{eqnarray*}
Therefore,
 \begin{eqnarray}\label{eq21}
&&\int_{-\infty}^{\infty}\Big(\int_{0}^{t}\varphi(B_{s}-a)dB_{s}\Big)f''(da)\nonumber\\
&=&\int_{-\infty}^{\infty}\Phi(B_{t}-a)f''(da)-\Phi(-a)\int_{-\infty}^{\infty}f''(da)-\frac{1}{2}\int_{-\infty}^{\infty}\Big(\int_{0}^{t}\varphi'(B_{s}-a)d\left
\langle B\right \rangle _{s}\Big)f''(da)\nonumber\\
&=&\int_{-\infty}^{\infty}\Phi(B_{t}-a)f''(da)-\Phi(-a)\int_{-\infty}^{\infty}f''(da)\nonumber\\
&&-\frac{1}{2}\int_{0}^{t}\Big(\int_{-\infty}^{\infty}\varphi'(B_{s}-a)f''(da)\Big)d\left
\langle B\right \rangle _{s}.
\end{eqnarray}
Let
\begin{eqnarray*}\label{}
g(x)=\int_{-\infty}^{\infty}\Phi(x-a)f''(da).
\end{eqnarray*}
Then, using again the generalized It\^o formula for the $G$-Brownian
motion (see Theorem \ref{th1}) to $g(B_{t})$, we deduce that
\begin{eqnarray*}
g(B_{t})=g(0)+\int_{0}^{t}g'(B_{s}
)dB_{s}+\frac{1}{2}\int_{0}^{t}g''(B_{s})d\left \langle B\right
\rangle _{s}.
\end{eqnarray*}
Consequently,
 \begin{eqnarray}\label{eq22}
\int_{0}^{t}\Big(\int_{-\infty}^{\infty}\varphi(B_{s}-a)f''(da)\Big)dB_{s}
&=&\int_{-\infty}^{\infty}\Phi(B_{t}-a)f''(da)-\Phi(-a)\int_{-\infty}^{\infty}f''(da)\nonumber\\
&-&\frac{1}{2}\int_{0}^{t}\Big(\int_{-\infty}^{\infty}\varphi'(B_{s}-a)f''(da)\Big)d\left
\langle B\right \rangle_{s}.
\end{eqnarray}
Finally, (\ref{eq21}) and (\ref{eq22}) allow to conclude that
 \begin{eqnarray*}\label{}
\int_{-\infty}^{\infty}\Big(\int_{0}^{t}\varphi(B_{s}-a)dB_{s}\Big)f''(da)
=\int_{0}^{t}\Big(\int_{-\infty}^{\infty}\varphi(B_{s}-a)f''(da)\Big)dB_{s}.
\end{eqnarray*}
 The proof is complete.
\end{proof}

\begin{lemma}\label{le5}
 If $f:\mathbb{R}\rightarrow[0,\infty)$ is a convex function and
 its right derivative  $f'_{+}$ is bounded, then for all $t\geq 0$, we have
  \begin{eqnarray*}\label{}
\int_{-\infty}^{\infty}\Big(\int_{0}^{t}sgn(B_{s}-a)dB_{s}\Big)f''(da)
=\int_{0}^{t}\Big(\int_{-\infty}^{\infty}sgn(B_{s}-a)f''(da)\Big)dB_{s},
q.s.
\end{eqnarray*}
\end{lemma}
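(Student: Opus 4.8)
The plan is to deduce this discontinuous-integrand identity from the $C_{b}^{1}$ version (the preceding Lemma \ref{le4}) by approximating $sgn$ by smooth functions and passing to the limit in $\mathbb{L}^{2}$. First I would fix a sequence $g_{k}\in C_{b}^{1}(\mathbb{R})$ with $|g_{k}|\leq 1$ and $g_{k}(x)=sgn(x)$ for $|x|\geq 1/k$; such a sequence exists by smoothing the jump. Since $g_{k}-sgn$ vanishes off $(-1/k,1/k)$ and is bounded by $2$, one has the pointwise bound $(g_{k}(x)-sgn(x))^{2}\leq 4\cdot 1_{(-1/k,1/k)}(x)$, which is the mechanism that lets Proposition \ref{pr} replace the missing dominated convergence theorem. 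Note also that, since $f$ is convex with bounded $f'_{+}$, the measure $\mu=f''$ is finite, with total mass $\mu(\mathbb{R})=f'(+\infty)-f'(-\infty)<\infty$; this finiteness is used repeatedly.

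The central quantitative estimate, uniform in $a\in\mathbb{R}$, is
$$\mathbb{\hat{E}}\Big[\int_{0}^{t}\big(g_{k}(B_{s}-a)-sgn(B_{s}-a)\big)^{2}d\langle B\rangle_{s}\Big]\leq 4\,\mathbb{\hat{E}}\Big[\int_{0}^{t}1_{(a-1/k,\,a+1/k)}(B_{s})\,d\langle B\rangle_{s}\Big]\leq \frac{8C}{k},$$
where the last step is Proposition \ref{pr} applied to an interval of length $2/k$, the constant $C$ depending only on $t$ and not on $a$ or $k$. Applying the preceding Lemma \ref{le4} to $\varphi=g_{k}$ gives, for each $k$,
$$\int_{-\infty}^{\infty}\Big(\int_{0}^{t}g_{k}(B_{s}-a)\,dB_{s}\Big)f''(da)=\int_{0}^{t}\Big(\int_{-\infty}^{\infty}g_{k}(B_{s}-a)\,f''(da)\Big)dB_{s},\quad q.s.,$$
and it remains to let $k\to\infty$ on both sides.

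For the left-hand side I would use the Cauchy--Schwarz inequality for the finite measure $\mu$ together with the fact that $\mathbb{\hat{E}}=\sup_{P\in\mathcal{P}}E_{P}$ commutes subadditively with $\mu$-integration: for a nonnegative, jointly measurable $h(a,\cdot)$ one has $\mathbb{\hat{E}}[\int h(a,\cdot)\,\mu(da)]=\sup_{P}\int E_{P}[h(a,\cdot)]\,\mu(da)\leq \int \mathbb{\hat{E}}[h(a,\cdot)]\,\mu(da)$ by Tonelli's theorem for each fixed $P$. Hence the $\mathbb{L}^{2}$-distance of the $k$-th left-hand side from its candidate limit is bounded by
$$\mu(\mathbb{R})\int_{-\infty}^{\infty}\mathbb{\hat{E}}\Big[\Big(\int_{0}^{t}\big(g_{k}(B_{s}-a)-sgn(B_{s}-a)\big)dB_{s}\Big)^{2}\Big]f''(da),$$
and Lemma \ref{le1} bounds the inner expectation by $c_{1}\,\mathbb{\hat{E}}[\int_{0}^{t}(g_{k}-sgn)^{2}(B_{s}-a)\,d\langle B\rangle_{s}]\leq 8Cc_{1}/k$ uniformly in $a$, so the whole quantity is $\leq 8Cc_{1}\mu(\mathbb{R})^{2}/k\to 0$. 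Thus the left-hand side converges in $\mathbb{L}^{2}$ to $\int_{-\infty}^{\infty}(\int_{0}^{t}sgn(B_{s}-a)\,dB_{s})f''(da)$, which is well defined because $a\mapsto\int_{0}^{t}sgn(B_{s}-a)\,dB_{s}$ is q.s. continuous by Corollary \ref{co2} and pathwise bounded by Tanaka's formula (Theorem \ref{th3}), while $\mu$ is finite. For the right-hand side I would argue in exactly the same spirit: Cauchy--Schwarz inside $\int_{0}^{t}\cdot\,d\langle B\rangle_{s}$, then Tonelli to exchange $\int_{0}^{t}d\langle B\rangle_{s}$ with $\int f''(da)$, then the same subadditive commutation of $\mathbb{\hat{E}}$ with $\mu$, which again reduces everything to the uniform estimate above and yields $\mathbb{L}^{2}$-convergence to $\int_{0}^{t}(\int_{-\infty}^{\infty}sgn(B_{s}-a)f''(da))\,dB_{s}$. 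Since both sides of the $k$-th identity converge in $\mathbb{L}^{2}$ and are q.s. equal for every $k$, the two limits coincide in $\mathbb{L}^{2}$, hence q.s. by Proposition \ref{pr1}, which is the assertion.

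The main obstacle is precisely the difficulty flagged in the introduction: there is no dominated convergence theorem under $\mathbb{\hat{E}}$, so the passage $g_{k}\to sgn$ cannot be justified by pointwise convergence alone. The device that rescues the argument is that the error $g_{k}-sgn$ is supported in the shrinking interval $(-1/k,1/k)$, which converts each limit into the uniform quantitative bound $8C/k$ supplied by Proposition \ref{pr}; the only other delicate point is the interchange of the sublinear expectation with the $\mu$-integral, which is handled cleanly through the representation $\mathbb{\hat{E}}=\sup_{P}E_{P}$ and Tonelli's theorem applied to each $P$ separately, using crucially that $\mu$ is a finite measure.
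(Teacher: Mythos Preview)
Your proposal is correct and follows essentially the same route as the paper: approximate $sgn$ by a $C_{b}^{1}$ function that agrees with it outside a shrinking interval, apply Lemma~\ref{le4} to the approximant, and pass to the limit using the uniform estimate from Proposition~\ref{pr} together with the finiteness of $\mu=f''$. The only cosmetic differences are that the paper parameterizes by $\varepsilon>0$ rather than $1/k$ and carries out the limit in $\mathbb{L}^{1}$ (via the square root coming from BDG) rather than in $\mathbb{L}^{2}$; your more explicit justification of the interchange $\mathbb{\hat{E}}[\int\cdot\,f''(da)]\leq\int\mathbb{\hat{E}}[\cdot]\,f''(da)$ through $\mathbb{\hat{E}}=\sup_{P}E_{P}$ and Tonelli is a welcome clarification of a step the paper leaves implicit.
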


\begin{proof}
   For  $\varepsilon>0$, we define $C^{1}-$ function
 $\varphi_{\varepsilon}:\mathbb{R}\rightarrow\mathbb{R}$ such that $ |\varphi_{\varepsilon}|\leq
 1$,  and
\[
\varphi_{\varepsilon}(x)=\left\{
\begin{array}{lll}
-1; & if\ x \leq -\varepsilon,\\
1; & if\ x \geq \varepsilon.\\
\end{array}
\right.
\]
Then, we have $|\varphi_{\varepsilon}(x)-sgn(x)|\leq2
1_{(-\varepsilon,\varepsilon)}(x)$, for $x\in\mathbb{R}$. From Lemma
\ref{le4}  we have
 \begin{eqnarray*}\label{}
\int_{-\infty}^{\infty}\Big(\int_{0}^{t}\varphi_{\varepsilon}(B_{s}-a)dB_{s}\Big)f''(da)
=\int_{0}^{t}\Big(\int_{-\infty}^{\infty}\varphi_{\varepsilon}(B_{s}-a)f''(da)\Big)dB_{s}.
\end{eqnarray*}
From Lemma \ref{le1} and Proposition \ref{pr} it then follows that
\begin{eqnarray*}\label{}
&&\mathbb{\hat{E}}[|\int_{0}^{t}\Big(\int_{-\infty}^{\infty}\varphi_{\varepsilon}(B_{s}-a)f''(da)\Big)dB_{s}-
\int_{0}^{t}\Big(\int_{-\infty}^{\infty}sgn(B_{s}-a)f''(da)\Big)dB_{s}|]\\
&\leq&\mathbb{\hat{E}}[\int_{0}^{t}\Big(\int_{-\infty}^{\infty}\varphi_{\varepsilon}(B_{s}-a)f''(da)
-\int_{-\infty}^{\infty}sgn(B_{s}-a)f''(da)\Big)^{2}d\left\langle B\right \rangle _{s}]^{\frac{1}{2}}\\
&\leq&\mathbb{\hat{E}}[\int_{0}^{t}\Big(\int_{-\infty}^{\infty}|\varphi_{\varepsilon}(B_{s}-a)
-sgn(B_{s}-a)|f''(da)\Big)^{2}d\left\langle B\right \rangle _{s}]^{\frac{1}{2}}\\
&\leq&C\mathbb{\hat{E}}[\int_{0}^{t}\Big(\int_{-\infty}^{\infty}1_{\{-\varepsilon,\varepsilon\}}(B_{s}-a)f''(da)\Big)^{2}d\left\langle B\right \rangle _{s}]^{\frac{1}{2}}\\
&\leq&C \Big(\int_{-\infty}^{\infty}f''(da)\Big)^{\frac{1}{2}}
\Big(\int_{-\infty}^{\infty}\mathbb{\hat{E}}[\int_{0}^{t}1_{\{-\varepsilon,\varepsilon\}}(B_{s}-a)
d\left\langle B\right \rangle _{s}]f''(da)\Big)^{\frac{1}{2}}\\
&\leq&C\varepsilon^{\frac{1}{2}}\int_{-\infty}^{\infty}f''(da)\rightarrow
 0, \ \text{as}\ \varepsilon \rightarrow 0,
\end{eqnarray*}
and
\begin{eqnarray*}\label{}
&&\mathbb{\hat{E}}[|\int_{-\infty}^{\infty}\Big(\int_{0}^{t}\varphi_{\varepsilon}(B_{s}-a)dB_{s}\Big)f''(da)-
\int_{-\infty}^{\infty}\Big(\int_{0}^{t}sgn(B_{s}-a)dB_{s}\Big)f''(da)|]\\
&\leq&\int_{-\infty}^{\infty}\mathbb{\hat{E}}[|\int_{0}^{t}\Big(\varphi_{\varepsilon}(B_{s}-a)-
sgn(B_{s}-a)\Big)dB_{s}|]f''(da)\\
&\leq&\int_{-\infty}^{\infty}\mathbb{\hat{E}}[|\int_{0}^{t}\Big(\varphi_{\varepsilon}(B_{s}-a)-
sgn(B_{s}-a)\Big)^{2}d\left\langle B\right \rangle _{s}|]^{\frac{1}{2}}f''(da)\\
&\leq&C\int_{-\infty}^{\infty}\mathbb{\hat{E}}[\int_{0}^{t}1_{\{-\varepsilon,\varepsilon\}}(B_{s}-a)
d\left\langle B\right \rangle _{s}]^{\frac{1}{2}}f''(da)\\
&\leq&C\varepsilon^{\frac{1}{2}}\int_{-\infty}^{\infty}f''(da)\rightarrow
 0, \ \text{as}\ \varepsilon \rightarrow 0,
\end{eqnarray*}
Consequently,
\begin{eqnarray*}\label{}
\int_{-\infty}^{\infty}\Big(\int_{0}^{t}sgn(B_{s}-a)dB_{s}\Big)f''(da)
=\int_{0}^{t}\Big(\int_{-\infty}^{\infty}sgn(B_{s}-a)f''(da)\Big)dB_{s},
q.s.
\end{eqnarray*}
 The proof is complete.
\end{proof}\vskip2mm

Now we give the proof of Theorem \ref{th4}. \vskip2mm

\begin{proof}
  Since the function $f'_{+}$ is  bounded, the measure $f''$ has its support in a
  compact interval $I$. From Section 3 in Appendix in \cite{RY1999}, we
  know that
\begin{eqnarray}\label{eq6}
f(x)=\dfrac{1}{2}\int_{I}|x-a|f''(a)da+\alpha_{I}x+\beta_{I},
\end{eqnarray}
where $\alpha_{I}$ and $\beta_{I}$ are constants. Moreover, for all
$x$ outside of a countable set $\Gamma$ in $\mathbb{R}$,
\begin{eqnarray}\label{eq7}
f'_{-}(x)=\dfrac{1}{2}\int_{I}sgn(x-a)|f''(da)+\alpha_{I}.
\end{eqnarray}
However, due to Corollary \ref{co1},
\begin{eqnarray*}
\int_{0}^{t}1_{\Gamma}(B_{s})d\langle B \rangle_{ s}=0,\ q.s.,t\geq
0.
\end{eqnarray*}
Consequently, thanks to the  equalities (\ref{eq6}) and (\ref{eq7}),
for all $t\geq 0$,  we have
\begin{eqnarray}\label{eq8}
f(B_{t})=\dfrac{1}{2}\int_{I}|B_{t}-a|f''(da)+\alpha_{I}B_{t}+\beta_{I},\
q.s.,
\end{eqnarray}
and
\begin{eqnarray}\label{eq9}
f'_{-}(B_{t})=\dfrac{1}{2}\int_{I}sgn(B_{t}-a)|f''(da)+\alpha_{I},\
q.s.
\end{eqnarray}
On the other hand, from Theorem \ref{th3} and equality (\ref{eq8})
we deduce that
\begin{eqnarray*}\label{}
f(B_{t})&=&\dfrac{1}{2}\int_{I}\Big(|a|+\int_{0}^{t}sgn(B_{s}-a)dB_{s}+L^{a}_{t}\Big)f''(da)+\alpha_{I}B_{t}+\beta_{I}\\
&=&f(0)+\dfrac{1}{2}\int_{I}\Big(\int_{0}^{t}sgn(B_{s}-a)dB_{s}\Big)f''(da)
+\dfrac{1}{2}\int_{I}L^{a}_{t}f''(da)+\alpha_{I}B_{t}.
\end{eqnarray*}
Finally, thanks to Lemma \ref{le5} and equality (\ref{eq9}), we have
\begin{eqnarray*}\label{}
f(B_{t})&=&f(0)+\dfrac{1}{2}\int_{0}^{t}\Big(\int_{I}sgn(B_{s}-a)f''(da)\Big)dB_{s}
+\dfrac{1}{2}\int_{I}L^{a}_{t}f''(da)+\alpha_{I}B_{t}\\
&=&f(0)+\int_{0}^{t}f'_{-}(B_{s})dB_{s}+\int_{I}L_{t}^{a}f''(da).
\end{eqnarray*}
 The proof is complete.
\end{proof}

  \vspace{4mm}

\noindent{\bf Acknowledgements.}

\vspace{4mm}
 The  author  thanks  Prof. Rainer Buckdahn for his
careful reading and helpful suggestions. The author also thanks the
editor and  anonymous referees for their helpful suggestions. This
work is supported by the Young Scholar Award for Doctoral Students
of the  Ministry of Education of China and the Marie Curie Initial
Training Network (PITN-GA-2008-213841).

{\small}

\end{document}